\newtheorem{Def}{\bf Definition}[subsection]
\newtheorem{Thm}[Def]{\bf Theorem}
\newtheorem{Lem}[Def]{\bf Lemma}
\newtheorem{Cor}[Def]{\bf Corollary}
\newtheorem{Pro}[Def]{\bf Proposition}
\newtheorem{Rem}[Def]{\bf Remark}
\newtheorem{Exa}[Def]{\bf Example}
\newcommand{\hooklongrightarrow}{\lhook\joinrel\longrightarrow}
\begin{document}
\title{\bf Weak Exactness for $\bf C^*$-algebras and Application to Condition (AO)}
\author{Yusuke Isono\thanks{Department of Mathematical Sciences,
University of Tokyo, Komaba, Tokyo, 153-8914, Japan \protect \\ \hspace{7.5pt}\quad E-mail: \texttt{isono@ms.u-tokyo.ac.jp}}}
\date{}

\maketitle

\begin{abstract}
Kirchberg introduced weak exactness for von Neumann algebras as an analogue of exactness for $C^*$-algebras. Ozawa found useful characterizations of weak exactness and he proved that weak exactness for group von Neumann algebras is equivalent to exactness of the groups. 
We generalize this concept to inclusions of $C^*$-algebras in von Neumann algebras and study some fundamental properties and relationship to Kirchberg's weak exactness. We then give some permanence properties which are similar to those of exact groups. 
In the last section, we study a similar condition to Ozawa's condition (AO) with our weak exactness and generalize Ozawa's theorem for bi-exact groups. As a corollary, we give new examples of prime factors. 
\end{abstract}

\section{\bf Introduction}
	
Weak exactness for von Neumann algebras is a notion introduced by Kirchberg $\cite{wk ex K}$. Ozawa developed the theory and gave some fundamental characterizations $\cite{wk ex O}$ and Dong--Ruan generalized the notion for dual operator spaces $\cite{DR}$.

In particular, Ozawa proved that a group von Neumann algebra $L\Gamma$ is weakly exact if and only if the group $\Gamma$ is exact. 
It is natural to expect that weak exactness has permanence properties which correspond to those of groups. For example, exactness of groups is preserved under taking subgroups, extensions, free products, and increasing unions, and so we may expect so does weak exactness. 
Indeed, it is well-known that exactness of $C^{\ast}$-algebras is preserved under taking subalgebras, tensor products, crossed products with exact groups, free products, and increasing unions. However, the thing is not so simple in the von Neumann algebra context.

This difficulty comes from the fact that weak exactness does $\it not$ pass to subalgebras (subspaces). Indeed, $\mathbb{B}(H)$, the von Neumann algebra of all bounded linear operators on a Hilbert space $H$, is always weakly exact, but there exist non weakly exact von Neumann algebras since there exist non-exact groups $\cite{Gr}$, $\cite{ame O}$. 
For this reason, there is no hope to find any local property for weak exactness. We mention that exactness for $C^{\ast}$-algebras has useful local characterizations.

Here is another important reason for the difficulty. To explain this, let us recall weak nuclearity. Let $A$ be a $C^{\ast}$-algebra, $M$ be a von Neumann algebra, and $\phi\colon A\rightarrow M$ be a linear map. 
We say $\phi$ is $\it factorable$ if it factors through a matrix algebra with completely positive (we write as c.p.) maps; $\phi$ is $\it weakly$ $\it nuclear$ if it is approximated by a net of contractive factorable maps point $\sigma$-weakly; and $M$ is $\it semidiscrete$ if $\mathrm{id}_{M}$ is weakly nuclear. 
Kirchberg proved $\cite{Ki 1}$ that $\phi\colon A\rightarrow M\subset\mathbb{B}(H)$ is weakly nuclear if and only if the map $A\odot M'\ni a\otimes x\mapsto ax\in\mathbb{B}(H)$ is continuous with respect to the minimal tensor norm (written as ''min-continuous'' or ''min-bounded'').
It is proved that semidescreteness is preserved under increasing unions but the proof involves a deep result, namely, equivalence of semidescreteness and injectivity. For a von Neumann algebra $M$ and an increasing net $(M_{i})$ of semidiscrete subalgebras with $M=(\bigcup_{i}M_{i})''$, it is easy to show that the inclusion from the norm closure of $\bigcup_{i}M_{i}$ to $M$ is weakly nuclear, but it is not straightforward to extend this property on $M$. The same phenomenon happens when we prove it for weak exactness. 

To solve the problem on semidiscreteness, we need the following deep fact $\cite{CE}$:
\begin{itemize}
\item for a von Neumann algebra $M$ and a $\sigma$-weakly dense $C^{\ast}$-subalgebra $A$ of $M$, $M$ is semidiscrete if the inclusion $A\hookrightarrow M$ is weakly nuclear.
\end{itemize}
We note that this fact is closely related to the following important theorems: semidiscreteness is equivalent to injectivity $\cite{Co}$, $\cite{CE}$; nuclearity (or exactness) implies local reflexivity; and exactness is equivalent to property $C$ $\cite{ex C}$.

In this paper, we will go along a similar line to the above. We first generalize weak exactness to $C^{\ast}$-algebras with faithful representations and reformulate Ozawa's characterizations in this context. We then prove the following result similar to the above.
\begin{Thm}[Corollary $\ref{wk ex main}$]
For a von Neumann algebra $M$ with separable predual and a $\sigma$-weakly dense $C^{\ast}$-subalgebra $A$ of $M$, $M$ is weakly exact if $A$ is weakly exact in $M$ (see Definition $\ref{def wk ex}$).
\end{Thm}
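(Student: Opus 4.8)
The plan is to match the two sides of the statement against the Kirchberg--Ozawa style characterizations that have been reformulated earlier in the paper, and then to carry out a Choi--Effros type transfer across the $\sigma$-weak closure. Both ``$M$ is weakly exact'' and ``$A$ is weakly exact in $M$'' (Definition $\ref{def wk ex}$) are, after such a reformulation, conditions of the same shape: for every C$^{\ast}$-algebra $C$ and every closed ideal $J\triangleleft C$ together with a $\ast$-homomorphism $C\to M'$, a certain minimal-tensor / multiplication datum built from the left-hand algebra ($M$, resp.\ $A$) and from $C$ is min-continuous, equivalently the induced sequence
\[
0\longrightarrow\overline{M\odot J}\longrightarrow M\otimes_{\min}C\longrightarrow M\otimes_{\min}(C/J)\longrightarrow 0
\]
is exact. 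So, after fixing a faithful normal representation $M\subseteq\mathbb{B}(H)$ (whence $A\subseteq M\subseteq\mathbb{B}(H)$ with $A$ $\sigma$-weakly dense) and using that separability of $M_{\ast}$ lets us restrict to separable $C$, the entire content is to pass the datum from $A$ to $M$; the commutant $M'\subseteq A'$ is untouched, so the hypothesis applies to exactly the same $C$'s.

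Before that I would record what density alone gives and where it stops. Since the minimal norm is injective, $A\otimes_{\min}C$ and $M\otimes_{\min}C$ sit isometrically inside $\mathbb{B}(H)\otimes_{\min}\mathbb{B}(H)\subseteq\mathbb{B}(H\otimes H)$, so the minimal norm there is an operator norm and hence $\sigma$-weakly lower semicontinuous; for a fixed tuple from $C$ the assignment $(m_{1},\dots,m_{n})\mapsto\sum_{i}m_{i}\otimes c_{i}$ is $\sigma$-weak-to-$\sigma$-weak continuous on bounded sets, and by Kaplansky every finite tuple in $M$ is a bounded $\sigma$-weak limit of tuples in $A$. Combining these one gets, for the multiplication datum, an inequality of the form $\|\cdot\|_{M\text{-picture}}\le\liminf\|\cdot\|_{A\text{-picture}}$ --- the \emph{opposite} of what is needed, since one wants to dominate the $M$-picture norm by the $M$-picture minimal norm, and $A\odot C$ is not norm-dense in $M\otimes_{\min}C$ (already $A=\mathcal{K}(H)\subsetneq\mathbb{B}(H)=M$ shows this). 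This is precisely the obstruction flagged in the Introduction for semidiscreteness.

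To get past it I would run the Choi--Effros argument in the present setting. Write $M=A^{\ast\ast}z$ for a central projection $z\in A^{\ast\ast}$, with $\pi_{z}\colon A^{\ast\ast}\to M$ the normal cut-down. Factorable contractions $A\to\mathbb{B}(H)$ through matrix algebras extend normally to $A^{\ast\ast}$, and hence restrict to factorable contractions $M\to\mathbb{B}(H)$, because a c.p.\ map into a finite-dimensional (injective) algebra has a normal bidual extension; so the approximating data for the $A$-picture transport to candidate data for the $M$-picture. That the transported data still converge point-$\sigma$-weakly --- equivalently, that min-continuity survives --- is the delicate point, and here one uses the bidual bookkeeping underlying property $C$: the hypothesis furnishes a min-continuous $\ast$-homomorphism on $A\otimes_{\min}C$, its bidual factors through the normal $\ast$-homomorphism $(A\otimes_{\min}C)^{\ast\ast}\to A^{\ast\ast}\bar\otimes C^{\ast\ast}$, and cutting down by $z\otimes 1$ produces the required min-continuous datum on $M\otimes_{\min}C\cong(A^{\ast\ast}z)\otimes_{\min}C$. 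Separability of $C$, together with local reflexivity of $C/J$ where it is needed, controls this passage, exactly as in $\cite{CE}$, $\cite{ex C}$. Feeding the result back into the characterization of weak exactness for von Neumann algebras gives the conclusion.

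The main obstacle, then, is this last transfer: the plain density argument of the second paragraph yields only lower semicontinuity, so the statement is genuinely not ``local'' and requires the rigidity of the Choi--Effros circle (semidiscreteness $=$ injectivity, exactness $=$ property $C$, local reflexivity). Everything else --- the reduction to separable commuting C$^{\ast}$-algebras, the identification of $\ker\bigl(M\otimes_{\min}C\to M\otimes_{\min}(C/J)\bigr)$ with $\overline{M\odot J}$ on the nose, and the check that the cut-down by $z$ intertwines all maps in sight --- is bookkeeping once that input is in hand.
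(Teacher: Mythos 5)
There is a genuine gap at the transfer step, and it sits exactly where the theorem's whole content lies. Restricting the given representation $\pi\colon M\otimes B\to\mathbb{B}(K)$ to $A\otimes B$ and applying the hypothesis gives min-continuity of the induced map on $A\odot(B/J)$; moreover, since $\pi$ is normal on $M$ and $A$ is $\sigma$-weakly dense, the images of $M$ and of $B/J$ commute, so the induced map on $M\odot(B/J)$ is automatically \emph{max}-continuous. The entire difficulty is the upgrade to the minimal norm, and your proposed mechanism does not deliver it: taking the bidual of the min-continuous $\ast$-homomorphism on $A\otimes(B/J)$ and ``cutting down by $z\otimes1$'' only produces a min-continuous map on $M\odot(B/J)\simeq A^{\ast\ast}z_{M}\odot(B/J)$ if that space sits min-isometrically inside $(A\otimes(B/J))^{\ast\ast}$. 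But the hypothesis (condition (iii) of Theorem $\ref{we ex thm}$) gives isometry of $\Theta_{X}\colon A\odot X^{\ast\ast}\to(A\otimes X)^{\ast\ast}$ only with first leg $A$; enlarging the first leg from $A$ to $A^{\ast\ast}z_{M}=M$ is a property-$C$-type statement about $M$ itself, i.e.\ essentially the conclusion you are trying to prove. So the ``bidual bookkeeping'' step asserts rather than establishes the key inequality. Two further misdirections: local reflexivity of $C/J$ is neither available (an arbitrary quotient $C^{\ast}$-algebra need not be locally reflexive) nor the place where the deep input enters; and your third paragraph transports factorable contractions $A\to\mathbb{B}(H)$, which is weak-nuclearity (semidiscreteness) data in the sense of the Choi--Effros analogue from the introduction, not the approximation data relevant to weak exactness.

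For comparison, the paper's route is different and avoids this circularity: from the hypothesis one has the finite-dimensional approximation property (v) of Theorem $\ref{we ex thm}$(1); separability of $H$ (equivalently of $M_{\ast}$) lets one run it over a dense sequence of finite-dimensional subspaces and glue the matrix images into a single norm-separable \emph{exact} operator space $S$ (the direct limit construction of Subsection 2.1, whose exactness was proved there), yielding normal c.c.\ maps $\phi\colon M\to S^{\ast\ast}$ and $\psi\colon S^{\ast\ast}\to M$ with $\psi\circ\phi=\mathrm{id}_{M}$ --- this is Theorem $\ref{we ex thm}$(2). One then invokes Ozawa's theorem that, for a von Neumann algebra with separable predual, the existence of such an $S$ is \emph{equivalent} to weak exactness; the local-reflexivity-type rigidity you correctly sense is needed enters there, through the exactness of $S$, not through $B/J$. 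To repair your argument you would need either to reproduce that construction of $S$ and cite Ozawa's characterization, or to prove directly that weak exactness of $A$ in $M$ forces the min-isometry of $M\odot(B/J)\hookrightarrow(A\otimes(B/J))^{\ast\ast}$, which is not a formal consequence of cutting biduals by $z_{M}\otimes1$.
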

 By this theorem, we obtain the following permanence properties.
\begin{Cor}[Propositions $\ref{inc uni}$, $\ref{ten pro}$, and $\ref{cro pro}$]
For von Neumann algebras with separable predual, weak exactness is preserved under tensor products, crossed products with exact groups, and increasing unions.
\end{Cor}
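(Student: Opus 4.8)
\bigskip
\noindent\textbf{Plan of proof.}\quad
The three assertions all reduce to Corollary \ref{wk ex main} in the same way. In each case the ambient von Neumann algebra $M$ --- an increasing union $(\bigcup_i M_i)''$, a tensor product $M_1\,\bar\otimes\,M_2$, or a crossed product $M_0\,\bar\rtimes\,\Gamma$ --- carries a natural $\sigma$-weakly dense $C^*$-subalgebra $A\subseteq M$: respectively the norm closure of $\bigcup_i M_i$, the minimal tensor product $M_1\otimes_{\min}M_2$, and the reduced $C^*$-crossed product $M_0\rtimes_r\Gamma$. I would show in each case that $A$ is weakly exact in $M$ in the sense of Definition \ref{def wk ex}; since the preduals are separable, Corollary \ref{wk ex main} then yields weak exactness of $M$. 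So everything comes down to three ``inclusion-level'' permanence statements, which I would attack through the reformulation of Ozawa's characterizations developed in the earlier sections, especially the minimal-tensor (min-boundedness) description of weak exactness of an inclusion.

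For increasing unions, first reduce an increasing net to an increasing sequence using separability, and write $M=(\bigcup_i M_i)''\subseteq\mathbb{B}(H)$ and $A=\overline{\bigcup_i M_i}$. From $M=(\bigcup_i M_i)''$ we get $M'=\bigcap_i M_i'$, so any $C^*$-algebra commuting with $M$ commutes with every $M_i$. Restricted to the finite stages $M_i\odot C\subseteq M_i\odot M_i'$, the min-bounded maps (resp.\ the relevant short exact sequences) witnessing weak exactness of the inclusion $A\subseteq M$ are controlled exactly by weak exactness of each $M_i$; since these stages increase with norm-dense union in $A\odot C$, the property passes to $A$. This is the expected analogue of the elementary fact, recalled in the introduction, that $\overline{\bigcup_i M_i}\hookrightarrow M$ is weakly nuclear when the $M_i$ are semidiscrete. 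I expect this case to be essentially routine once the reformulation is in hand.

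For tensor products and crossed products I would isolate two lemmas on weak exactness of inclusions, each patterned on the corresponding stability property of exact $C^*$-algebras. (i) If $A_j$ is weakly exact in $N_j$ for $j=1,2$, then $A_1\otimes_{\min}A_2$ is weakly exact in $N_1\,\bar\otimes\,N_2$ (using $(N_1\,\bar\otimes\,N_2)'=N_1'\,\bar\otimes\,N_2'$); this mirrors stability of exactness under minimal tensor products and is proved by the same slice-map / property-$C$ type argument applied to the min-bounded data of $A_1\subseteq N_1$ and $A_2\subseteq N_2$. (ii) If $A_0$ is weakly exact in $N_0$ and $\Gamma\curvearrowright(N_0,A_0)$ with $\Gamma$ exact, then $A_0\rtimes_r\Gamma$ is weakly exact in $N_0\,\bar\rtimes\,\Gamma$; this mirrors the theorem that the reduced crossed product of an exact $C^*$-algebra by an exact group is exact, and I would prove it by feeding the standard characterizations of exactness of $\Gamma$ (exactness of $C^*_r\Gamma$, amenability of the boundary action, approximation of $1_{\ell^\infty\Gamma}$ by finitely supported positive-definite maps) into the min-bounded data of $A_0\subseteq N_0$ to produce the extension required for $A_0\rtimes_r\Gamma\subseteq N_0\,\bar\rtimes\,\Gamma$. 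Specializing to $A_j=M_j$ and $A_0=M_0$, which are weakly exact in themselves (this is how Kirchberg's weak exactness is recovered), and applying Corollary \ref{wk ex main}, gives the remaining two permanence properties.

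The main obstacle I anticipate lies in lemma (ii): weak exactness of an inclusion is a statement about a $C^*$-algebra sitting inside a von Neumann algebra \emph{together with the commutant of that algebra}, whereas the exactness machinery for groups is purely $C^*$-algebraic, so the delicate point is to interface the two and to verify that the various minimal-tensor-norm continuity statements survive the completions and $\sigma$-weak closures involved in building $N_0\,\bar\rtimes\,\Gamma$. Lemma (i) should be comparatively soft, and the increasing-union case softer still.
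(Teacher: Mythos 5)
Your top-level reduction is exactly the paper's: exhibit a canonical $\sigma$-weakly dense $C^{\ast}$-subalgebra of the ambient von Neumann algebra, prove it is weakly exact in that algebra in the sense of Definition \ref{def wk ex}, and invoke Corollary \ref{wk ex main} (plus separability). For increasing unions and tensor products your sketches are correct, and in fact the paper does these in one line via the local condition $(\mathrm{v}')$ of Remark \ref{rem v'}: any finite-dimensional subspace of $\bigcup_i M_i$ lies in some $M_i$, and any finite-dimensional subspace of $M_1\odot M_2$ lies in some $E_1\odot E_2$, where one simply tensors the approximating maps coming from condition $(\mathrm{v})$ for $M_1$ and $M_2$. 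The commutant identities you invoke ($M'=\bigcap_i M_i'$, $(N_1\mathbin{\bar{\otimes}}N_2)'=N_1'\mathbin{\bar{\otimes}}N_2'$) play no role here: weak exactness of an inclusion is formulated through representations of $A\otimes B$ that are normal on $A$, not through the commutant (that is the weak nuclearity picture), so this part of your plan is a harmless detour rather than an error.

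The genuine gap is your lemma (ii), precisely the step you flag as the main obstacle: you give no mechanism for it, and the ingredients you list would not supply one. Approximation of $1$ by finitely supported positive-definite functions on $\Gamma$ characterizes amenability, not exactness; and the usual $C^{\ast}$-proof that reduced crossed products by exact groups preserve exactness rests on the exact sequence $(A\rtimes\Gamma)/(J\rtimes\Gamma)\simeq (A/J)\rtimes\Gamma$, which is unavailable here because weak exactness of an inclusion cannot be encoded by exact sequences (the paper stresses exactly this point before Proposition \ref{cro pro}). What the paper actually does is: (a) use weak exactness of $M$ (characterization $(\mathrm{iii})$ of Theorem \ref{we ex thm}) to make $\Theta_B\colon B^{\ast\ast}\otimes M\to ((B\otimes M)\rtimes\Gamma)^{\ast\ast}$ min-bounded, and check that $(\Theta_B,u)$ is a covariant pair — this needs universality of the \emph{full} crossed product and the invariance $\widetilde{\alpha}_g(z_M)=z_M$ (Lemma \ref{lemma a}); (b) let exactness of $\Gamma$ enter only through property $C'$ of $C^{\ast}_\lambda(\Gamma)$, which extends a representation of $(B\otimes M)\rtimes\Gamma\otimes C^{\ast}_\lambda(\Gamma)$ over the bidual of the first tensor factor (Lemma \ref{lemma A}); (c) transport the problem into that picture via the canonical maps $\Phi$ and $\Psi$ with $\Psi\circ\Phi=\mathrm{id}$ (Lemma \ref{lemma B}); and (d) since $\pi\circ\Psi_{B\otimes M}$ is only u.c.p., Stinespring-dilate it to a genuine representation that remains normal on $M$ before applying (b), then compress back. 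Without these steps (or a concrete substitute for them), your lemma (ii) — and hence the crossed-product third of the corollary — is not proved.
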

For free products, we have only partial answers and the general case is left open.

In the last section, we study a similar condition to Ozawa's condition (AO). More precisely, we will replace local reflexivity in condition (AO) with our weak exactness. As an application, we generalize Ozawa's theorem from von Neumann algebras of finite type to those of general type. 
We refer to Section $\ref{ap ao}$ for the details. In the theorem below, we need our weak exactness for the proof of the first statement. 

\begin{Thm}[Corollaries $\ref{ab prim}$ and $\ref{inj prim}$]
Let $A$ be an injective finite von Neumann algebra with separable predual and $\Gamma$ be a countable discrete group acting on $A$. Let $M$ be a non-injective (possibly non-unital) subfactor of $A\mathbin{\bar{\rtimes}}\Gamma$ with a faithful normal conditional expectation from $1_{M}(A\mathbin{\bar{\rtimes}}\Gamma)1_{M}$ onto $M$. Assume $\Gamma$ is bi-exact (see Definition $\ref{bi-ex def}$).
\begin{itemize}
\item[$(1)$]If $M$ is not of type ${\rm III}_1$, then it is semiprime. That means, for any tensor decomposition $M=M_{1}\mathbin{\bar{\otimes}}M_{2}$, one of $M_{i}$ ($i=1,2$) is injective.
\item[$(2)$]If $A$ is abelian, then $M$ is prime. That means, for any tensor decomposition $M=M_{1}\mathbin{\bar{\otimes}}M_{2}$, one of $M_{i}$ ($i=1,2$) is of type $\rm I$.
\end{itemize}
\end{Thm}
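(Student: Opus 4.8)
The plan is to reduce both parts of the statement to a single \emph{relative solidity} property of $N:=A\mathbin{\bar{\rtimes}}\Gamma$ and then to convert the resulting ``amenability relative to $A$'' into injectivity for~(1) and into type~$\mathrm I$ for~(2). Represent $N$ on $H=\ell^2\Gamma\otimes L^2A$ by the crossed-product representation and write $\mathbb K_A:=\mathbb K(\ell^2\Gamma)\otimes_{\min}\mathbb B(L^2A)$ for the closed ideal of $\mathbb B(H)$ generated by $\mathbb K(\ell^2\Gamma)\otimes 1$, the ``compact operators relative to $A$''; modding out by $\mathbb K_A$ gives the relative Calkin algebra $\mathbb B(H)/\mathbb K_A$.

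First I would establish the appropriate weak-exactness form of condition (AO) for $N$ relative to $A$. Since $A$ has separable predual and is injective, it is hyperfinite, and one extracts a $\Gamma$-invariant $\sigma$-weakly dense exact $C^*$-subalgebra together with the fact, via Kirchberg's characterization of weak nuclearity, that $A$ (being semidiscrete) is weakly exact in $N$. Bi-exactness of $\Gamma$ provides an amenable action on the small-at-infinity boundary, and the corresponding computation shows that the relevant $\sigma$-weakly dense exact $C^*$-subalgebra $B\subseteq N$ satisfies $[B,JBJ]\subseteq\mathbb K_A$ ($J$ the modular conjugation of $N$) and that the induced $*$-homomorphism $B\odot JBJ\to\mathbb B(H)/\mathbb K_A$ extends to the minimal tensor product. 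The inputs here are: $A_0\rtimes_r\Gamma$ is exact for a nuclear $\sigma$-weakly dense $A_0$ obtained from hyperfiniteness; the $\Gamma$-action on the boundary is amenable; and, to promote the property from the dense $C^*$-level to $N$, the weak exactness of $A$ in $N$ together with Corollary~\ref{wk ex main} and the permanence machinery of Section~\ref{ap ao}.

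Next, suppose $M\subseteq pNp$ with $p=1_M$ is a subfactor carrying a faithful normal conditional expectation $E\colon pNp\to M$, and suppose $M=M_1\mathbin{\bar{\otimes}}M_2$; since the $M_i$ and $M$ are factors, $M_1'\cap M=M_2$. Cutting the condition-(AO) data by $p$, composing with $E$, and pairing the resulting $\min$-continuous $*$-homomorphism into the relative Calkin algebra against the two automatically commuting inclusions $M_1\hookrightarrow M\hookleftarrow M_2$, one runs the Ozawa-type argument and concludes that $M_1$ or $M_2$ is amenable relative to $A$ inside $N$. I expect the main obstacle to lie in the type~$\mathrm{III}$ case: there this argument is carried out not in $M$ but in the continuous core $\widetilde M\subseteq\widetilde N$, where $\widetilde N$ is again of the form $\widetilde A\mathbin{\bar{\rtimes}}\Gamma$ with $\widetilde A$ injective (a copy of $A\mathbin{\bar{\otimes}}L^\infty(\mathbb R)$ with a perturbed action), so that Step~1 applies verbatim to $\widetilde N$ relative to $\widetilde A$. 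One must then descend the conclusion from $\widetilde M$ back to $M$ across the fact that $\widetilde M$ is only an $\mathbb R$-crossed subproduct of $\widetilde{M_1}\mathbin{\bar{\otimes}}\widetilde{M_2}$, so a ``leftover'' copy of $\mathbb R$ has to be eliminated. If $M$ is not of type $\mathrm{III}_1$ then neither are $M_1$ and $M_2$ (Connes' invariants of a tensor product of factors intersect), and the leftover $\mathbb R$ can be discretized via an almost periodic weight, which makes the descent go through; this is precisely the source of the type restriction in~(1), and it is also the point at which the dense $C^*$-subalgebra of $\widetilde M$ is only known to be exact (the $\Gamma$-action on $\widetilde A$ need not be amenable), so the paper's weak exactness is genuinely needed in place of local reflexivity.

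Finally, in case~(1) the algebra $A$, hence $\widetilde A$, is injective, and a von Neumann algebra amenable relative to an injective one is injective; thus one of $M_1,M_2$ is injective, i.e.\ $M$ is semiprime. In case~(2) the algebra $A$ is abelian, and one checks that a factor amenable relative to the abelian $A$ inside the crossed product must be of type~$\mathrm I$ (relative amenability over an abelian algebra, together with possessing a tensor complement inside a factor, is incompatible with diffuseness); thus one of $M_1,M_2$ is of type~$\mathrm I$, i.e.\ $M$ is prime, and the abelian hypothesis supplies enough rigidity in the Gelfand picture to cover the type~$\mathrm{III}_1$ case as well, which is why~(2) carries no restriction on the type. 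Non-injectivity of $M$ is used only to ensure that the decomposition $M=M_1\mathbin{\bar{\otimes}}M_2$ was nontrivial in the asserted sense.
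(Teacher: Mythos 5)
Your reduction has two genuine problems, one of them fatal to case (2). The claim that ``a factor amenable relative to the abelian $A$ inside the crossed product must be of type $\mathrm I$'' is false: the hyperfinite $\mathrm{II}_1$ factor is amenable relative to anything, is diffuse, and can perfectly well occur as a tensor factor of a subfactor with expectation of $A\mathbin{\bar{\rtimes}}\Gamma$. In the paper the type $\mathrm I$ conclusion comes from a different mechanism, located on the \emph{intertwining} side of the dichotomy rather than on the amenability side: if a tensor factor $M_2$ is not of type $\mathrm I$, then (after removing its type $\mathrm I$ summand and using Haagerup--St{\o}rmer to find a type $\mathrm{II}_1$ centralizer containing a copy of $R$ with expectation) one gets a finite subalgebra $N$ with $N\not\preceq_{A\bar{\rtimes}\Gamma}A$ because nothing of type $\mathrm{II}$ embeds into an \emph{abelian} algebra; Theorem \ref{bi ex thm} then makes $N'\cap 1_N(A\mathbin{\bar{\rtimes}}\Gamma)1_N$, hence the other factor $M_1$, injective. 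Primeness follows only at the very end from the hypothesis that $M$ is non-injective (both factors injective would force $M$ injective), so non-injectivity is doing more than ensuring ``nontriviality''. Relatedly, your sketch never addresses the intertwining half of the argument at all: the whole point of the proof of Theorem \ref{bi ex thm} is that the proper expectation $\Psi_N$ kills $\mathbb{K}(A,\Gamma;\mathcal G)$, and this is exactly where $N\not\preceq A\mathbin{\bar{\rtimes}}\Lambda$ enters through the finite-dimension bimodule criterion of Theorem \ref{em thm}; the (AO)-type min-continuity alone does not produce the dichotomy you invoke.

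Your treatment of the type $\mathrm{III}$ case also diverges from the paper and is not justified as written. The paper never passes to the continuous core: since $A$ is finite, Theorem \ref{bi ex thm} applies directly inside $A\mathbin{\bar{\rtimes}}\Gamma$ to finite (then, via Proposition \ref{cor F}, abelian) subalgebras with expectation, and the non-$\mathrm{III}_1$ hypothesis in (1) is used only to rule out a decomposition with both factors of type $\mathrm{III}_1$; a non-injective factor of type $\mathrm{II}$ or $\mathrm{III}_\lambda$ ($0\le\lambda<1$) is handled by its discrete decomposition $N\mathbin{\bar{\rtimes}}\mathbb Z$, which supplies the required non-injective finite subalgebra with expectation. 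Your core-based route faces exactly the obstacle you name --- the core of $M_1\mathbin{\bar{\otimes}}M_2$ is not the tensor product of the cores --- and ``discretizing the leftover $\mathbb R$ via an almost periodic weight'' is not an argument; as stated this is a gap. Finally, the role of weak exactness is misplaced in your Step 1: it is not about extracting a $\Gamma$-invariant exact dense subalgebra of $A$ (which is not available in general), nor about the core. It is that $B=A\rtimes_r\Gamma$, the reduced crossed product of the von Neumann algebra $A$, need not be exact or locally reflexive, so Ozawa's use of local reflexivity is replaced by weak exactness of the commutant side $J(A\rtimes_r\Gamma)J$ in $M'$ (Proposition \ref{cro pro} feeding into Lemma \ref{wk ex ao}), while the min-continuity into the relative Calkin-type quotient (Proposition \ref{bi-ex cross}) uses only injectivity of $A$ and amenability of the bi-exact boundary action.
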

In Subsection $\ref{ex prim}$, we give some explicit examples of (semi)prime factors as a corollary of the theorem.

After writing a draft of this paper, the author was informed that C. Houdayer and S. Vaes also obtained the same result as the second statement in this theorem by a very different method $\cite{HV}$. They used $\it array$, a generalized notion of cocycle defined in $\cite{CS}$, and so their proof is completely different from ours. We mention that bi-exactness is equivalent to a condition on arrays $\cite[\rm Proposition\ 2.1]{CSB}$.
\\[1em]
A good reference throughout the paper is a book $\cite{BO}$ of Brown and Ozawa.
\\[1em]
${\bf \bullet Notation}$

\nopagebreak
We denote Hilbert spaces by $H,K$; the sets of bounded linear operators on them by $\mathbb{B}(H),\mathbb{B}(K)$ (or sometimes $\mathbb{B}$); and the Hilbert space tensor product of them by $H\otimes K$. The $(n\times n)$-matrix algebra with coefficient in $\mathbb{C}$ is written as $\mathbb{M}_{n}$. 
The minimal tensor product of $C^{\ast}$-algebras $A,B$ is denoted by $A\otimes B$ and the algebraic tensor product is denoted by $A\odot B$. For a discrete group $\Gamma$, $A\rtimes_{r} \Gamma$ means the reduced crossed product and $A\rtimes \Gamma$ means the full crossed product. 
The crossed product von Neumann algebra for a von Neumann algebra $M$ is denoted by $M\mathbin{\bar{\rtimes}}\Gamma$. The symbol $A\ast B$ is the reduced free product $C^{\ast}$-algebra and $M\mathbin{\bar{\ast}} N$ is the free product von Neumann algebra. Finally, we use the symbol $J\lhd B$ for a pair of a unital $C^{\ast}$-algebra $B$ and a closed ideal $J$ of $B$.

\section{\bf Preliminaries}\label{pre}

\subsection{\bf Operator spaces}\label{op sp}

For operator space theory, we refer the reader to $\cite{ER}$, $\cite{Pisier}$.

A Banach space $X$ is said to be an $\it operator$ $\it space$ if it is a closed subspace of a $C^{\ast}$-algebra $A$. For an operator space $X$, we endow a norm on $X\odot \mathbb{M}_{n}$ with $A\otimes \mathbb{M}_{n}$ for each $n\in \mathbb{N}$ and we consider the operator space structure with such norms which depends on the choice of $A$. 
The operator space $X\odot \mathbb{M}_{n}$ with the norm from $A\otimes \mathbb{M}_{n}$ is denoted by $X\otimes \mathbb{M}_{n}$ or $\mathbb{M}_{n}(X)$. More generally, the operator space $X\otimes Y$ is the compression of $X\odot Y$ in $A\otimes B$ for operator spaces $X\subset A$ and $Y\subset B$.

A bounded linear map $\phi \colon X\rightarrow Y$ between operator spaces is said to be $\it completely$ $\it bounded$ ($\it c.b.$) if $\sup_{n}\|\phi\otimes\mathrm{id}_{n}\|(=:\|\phi\|_{{\rm cb}})<\infty$ for bounded linear maps $\phi\otimes\mathrm{id}_{n}: X\otimes \mathbb{M}_{n} \rightarrow Y\otimes \mathbb{M}_{n}$. 
We say the map $\phi$ is $\it completely$ $\it contractive$ ($\it c.c.$) if $\|\phi\|_{{\rm cb}}\leq1$; $\phi$ is $\it completely$ $\it isometry$ if each $\phi\otimes\mathrm{id}_{n}$ is isometry; and $\phi$ is $\it completely$ $\it isomorphic$ if it is invertible and $\|\phi\|_{{\rm cb}}=\|\phi^{-1}\|_{{\rm cb}}=1$. We denote the space of all c.b. maps from $X$ into $Y$ by $\mathrm{CB}(X,Y)$ equipped with a  Banach norm $\|\cdot\|_{{\rm cb}}$. It is an operator space in a natural way. 

For an operator space $X$, the $\it dual$ $\it operator$ $\it space$ $X^{\ast}$ of $X$ is the dual space of $X$ as a Banach space. We give an operator space structure to $X^{\ast}$ by 
\begin{equation*}
X^{\ast} \ni \phi \longmapsto (\theta_{x}(\phi))_{x}\in \prod_{m\in\mathbb{N}} \prod_{x\in(\mathbb{M}_{m}(X))_{1}} \mathbb{M}_{m}\subset \mathbb{B},
\end{equation*}
where $\theta_{x}$ is the map defined by $\theta_{x} \colon X^{\ast}\ni \phi \mapsto (\phi(x_{k,l}))_{k,l}\in \mathbb{M}_{m}$ for $x=(x_{k,l})_{k,l}\in\mathbb{M}_{m}$. This structure gives the following isometric isomorphism:
\begin{equation*}
X^{\ast}\otimes\mathbb{M}_{n}\ni \phi\otimes e_{k,l}\longmapsto(x\mapsto \phi(x)e_{k,l})\in \mathrm{CB}(X,\mathbb{M}_{n}).
\end{equation*}
We can generalize this correspondence by replacing $\mathbb{M}_{n}$ with any operator space $Y$. Indeed, for $\phi\in X^{\ast}$ and $y\in Y$, there is a finite rank map  $T_{\phi,y}\colon X\rightarrow Y$ defined by $T_{\phi,y}(x)=\phi(x)y$, and this gives the following isometric inclusion:
\begin{equation*}
X^{\ast}\otimes Y \hooklongrightarrow \mathrm{CB}(X,Y).
\end{equation*}
The inclusion is surjective if $X$ or $Y$ is finite dimensional. In the case where $X$ is a  dual operator space $Z^{\ast}$, the image of $Z\odot Y\subset Z^{\ast\ast}\otimes Y$ in $\mathrm{CB}(Z^{\ast},Y)$ coincides with the set of all finite rank $\mathrm{weak}^{\ast}$-continuous maps.

Next, we define a direct limit of operator spaces. Let $(X_{n},\phi_{n})_{n\in\mathbb{N}}$ be a direct system of operator spaces, that is, each $X_{n}$ is an operator space and each $\phi_{n} \colon X_{n} \rightarrow X_{n+1}$ is a c.c. map. For simplicity, we assume $X_{n}\subset \mathbb{M}_{k(n)}$ for some $k(n)\in\mathbb{N}$ (in the paper, we will need only such $X_{n}$). Let $S_{0}$ be a space
\begin{equation*}
\{ (x_{n})_{n}\in \prod_{n}\mathbb{M}_{k(n)} \mid {\rm there\ exists}\ m\in\mathbb{N}\  {\rm such \ that}\  x_{m}\in X_{m} \ {\rm and}\  x_{n+1}=\phi_{n}(x_{n}) \ {\rm for\ all}\ n\geq m\}
\end{equation*}
and define the $\it direct$ $\it limit$ of $(X_{n},\phi_{n})$ as $\pi (S)$ and denote it by $\displaystyle\varinjlim_{n}(X_{n},\phi_{n})$, where $\pi \colon \prod \mathbb{M}_{k(n)}\rightarrow \prod\mathbb{M}_{k(n)}/\bigoplus\mathbb{M}_{k(n)}$ is the quotient map and $S$ is the norm closure of $S_{0}$. The space $\displaystyle\varinjlim_{n}(X_{n},\phi_{n})$ is automatically closed (use approximate unit of $\bigoplus\mathbb{M}_{k(n)}$) and so it is an operator space. We will see $\displaystyle\varinjlim_{n}(X_{n},\phi_{n})$ is 1-exact as an operator space.

\subsection{\bf Double dual of $C^{\ast}$-algebras}\label{**}

Let $M$ be a von Neumann algebra and $A\subset M$ be a $\sigma$-weakly dense unital $C^{\ast}$-subalgebra (possibly $A=M$). 
Then the identity map on $A$ can be uniquely extended to a map from $A^{\ast\ast}$ onto $M$ and we write the central projection corresponding the kernel of the map as $1-z_{M}$. That means $z_{M}$ is the unique central projection satisfying $A^{\ast\ast}z_{M}\simeq M$ (see $\cite[\rm Chapter\ III]{Tak 1}$, for example). 
We note that the multiplication map by $z_M$ from $A$ to $A^{**}z_M$ is injective and so, in the case where $A=M$, we have $Mz_{M}\simeq M$.

Let $M$ be a von Neumann algebra and consider $z_M$ for $A=M$. If a net $z_i$ converges to $z_M$ $\sigma$-weakly in $M^{**}$, this net converges to 1 $\sigma$-weakly in $M$ by the identification $Mz_M\simeq M$. For any $\phi\in\mathrm{Aut}(M)$ and its unique extension $\widetilde{\phi}\in\mathrm{Aut}(M^{**})$, we have $\displaystyle\lim_i \phi(z_i)=1$ in $M$ and $\displaystyle\lim_i \widetilde{\phi}(z_i)=\widetilde{\phi}(z_M)$ in $M^{**}$. Then, multiplying by $z_M$, we have $\displaystyle\lim_i \phi(z_i)z_M=z_M$ in $Mz_M\subset M^{**}$ and $\displaystyle\lim_i \widetilde{\phi}(z_i)z_M=\widetilde{\phi}(z_M)z_M$ in $M^{**}$. Since $\phi(z_i)=\widetilde{\phi}(z_i)$ in $M^{**}$, these equations implies $\widetilde{\phi}(z_M)z_M=z_M$ and so $\widetilde{\phi}(z_M)\geq z_M$. Considering $\phi^{-1}$, we proved that $\widetilde{\phi}(z_M)=z_M$ for any $\phi\in\mathrm{Aut}(M)$.

Let $0\rightarrow J\rightarrow A \rightarrow A/J\rightarrow 0$ be an exact sequence and $\pi \colon A\rightarrow A/J$ be the quotient map. Take the central projection $z\in A^{\ast\ast}$ with $zA^{\ast\ast}\simeq J^{\ast\ast}$. Then we have a natural identification
\begin{alignat*}{3}
(1-z)A^{\ast\ast} &\longrightarrow& \ (A/J)^{\ast\ast}  &\longrightarrow& \ A^{\ast\ast}/J^{\ast\ast} \\
(1-z)a \hspace{1.1em} &\longmapsto& \pi^{\ast\ast}(a) \  &\longmapsto& \ a+J^{\ast\ast},
\end{alignat*}
where $\pi^{\ast\ast} \colon A^{\ast\ast}\rightarrow (A/J)^{\ast\ast}$ is the extension of $\pi$.

Let $A$ and $B$ be unital $C^{\ast}$-algebras. For the inclusion $A\simeq A\otimes1\subset A\otimes B$, there is the canonical extension $A^{\ast\ast}\simeq A^{\ast\ast}\otimes1 \hookrightarrow (A\otimes B)^{\ast\ast}$.
 Note that the inclusion is homeomorphic in the $\sigma$-weak (and hence $\sigma$-strong and $\sigma$-$\mathrm{strong}^{\ast}$) topology. Since ranges of $A^{\ast\ast}\hookrightarrow (A\otimes B)^{\ast\ast}$ and $B^{\ast\ast}\hookrightarrow (A\otimes B)^{\ast\ast}$ commute with each other, they induce the following injective $\ast$-homomorphism:
\begin{equation*}
A^{\ast\ast}\odot B^{\ast\ast} \longrightarrow (A\otimes B)^{\ast\ast}.
\end{equation*}
Here the injectivity of this map comes from that of $A^{\ast\ast}\odot B^{\ast\ast} \rightarrow (A\otimes B)^{\ast\ast} \rightarrow A^{\ast\ast}\mathbin{\bar{\otimes}} B^{\ast\ast}$, where the second map is obtained by universality. 
Regard $A^{\ast\ast}\odot B^{\ast\ast}$ as a subset of $(A\otimes B)^{\ast\ast}$ via this map. By $a\otimes b\in A^{\ast\ast}\odot B^{\ast\ast}$, we mean an element of the form of $(\displaystyle\lim_{\lambda}a_{\lambda}\otimes 1)(\lim_{\mu}1\otimes b_{\mu})$ for some $a_{\lambda}\in A$ and $b_{\mu}\in B$ with $\displaystyle\lim_{\lambda}a_{\lambda}=a$ and $\displaystyle\lim_{\mu}b_{\mu}=b$ (the limit is taken for your favorite weak topology). 
Then, if we choose $a_{\lambda}\in A$ and $ b_{\lambda}\in B$ as bounded nets converging in the $\sigma$-$\mathrm{strong}^{\ast}$ topology, we can write $a\otimes b=\displaystyle\lim_{\lambda}a_{\lambda}\otimes b_{\lambda}$. This representation is useful and it is easy to check that we can define a similar representation for non-unital $C^*$-algebras $A,B$ and for the inclusion $X^{\ast\ast}\odot Y^{\ast\ast} \subset (X\otimes Y)^{\ast\ast}$, where $X\subset A$ and $Y\subset B$ are operator spaces.

Now, we define the following map $\Theta_{X}$ for any operator space $X$, which plays an important role to study weak exactness. Let $A$ be a $C^{\ast}$-algebra and $M$ be a von Neumann algebra containing $A$ such that $A\subset M$ is $\sigma$-weakly dense. For any $C^{\ast}$-algebra $B$, consider a (non-unital) injective $\ast$-homomorphism
\begin{equation*}
\Theta_{B}\colon A\odot B^{\ast\ast}\subset M\odot B^{\ast\ast} \simeq (z_{M}A^{\ast\ast}\odot B^{\ast\ast}) \subset (A^{\ast\ast}\odot B^{\ast\ast})\longrightarrow (A\otimes B)^{\ast\ast}.
\end{equation*}
Then, by the observation in the paragraph above, the map is of the form $\Theta_{B}(a\otimes b)=z_{M}a\otimes b=\displaystyle\lim_{\lambda}z_{\lambda}a\otimes b_{\lambda}$ for some bounded nets $z_{\lambda}\in A$ and $b_{\lambda}\in B$ satisfying $\displaystyle\lim_{\lambda} z_{\lambda}=z_{M}$ and $\displaystyle\lim_{\lambda} b_{\lambda}=b$.
The map $\Theta_{B}$ is bounded on $A\odot B$. Indeed, for any $a_{k}\in A$ and $ b_{k}\in B$ ($k=1,2,\ldots,n$), we have
\begin{eqnarray*}
\|\Theta_{B}(\sum_{k=1}^{n} a_{k}\otimes b_{k})\|_{(A\otimes B)^{\ast\ast}}
&=& \|\lim_{\lambda} (z_{\lambda}\otimes 1)(\sum_{k} a_{k}\otimes b_{k})\|_{(A\otimes B)^{\ast\ast}} \\
&\leq& \liminf_{\lambda}\| (z_{\lambda}\otimes 1)(\sum_{k} a_{k}\otimes b_{k})\|_{(A\otimes B)^{\ast\ast}} \\
&\leq& \| \sum_{k} a_{k}\otimes b_{k}\|_{(A\otimes B)^{\ast\ast}} \\
&=& \| \sum_{k} a_{k}\otimes b_{k}\|_{(A\otimes B)}.
\end{eqnarray*}
Therefore, $\Theta_{B}$ is isometric on $A\otimes B$ by minimality of $\otimes$. Finally, for any closed subspace $X\subset B$, define the map $\Theta_{X}$ by
\begin{equation*}
\Theta_{X}:=\Theta_{B}\mid_{A\odot X^{\ast\ast}} \colon A\odot X^{\ast\ast} \longrightarrow (A\otimes X)^{\ast\ast} \subset (A\otimes B)^{\ast\ast}.
\end{equation*}
It is easy to check that the image of $\Theta_{X}$ is really contained in $(A\otimes X)^{\ast\ast}$ and $\Theta_{X}$ is isometric on $A\otimes X$.

\subsection{\bf Exactness and weak exactness}

For exactness and weak exactness, we refer the reader to $\cite{Wa}$ and $\cite{BO}$.

A $C^{\ast}$-algebra $A$ is said to be $\it exact$ if $0\rightarrow A\otimes J\rightarrow A\otimes B\rightarrow A\otimes (B/J)\rightarrow 0$ is exact for any short exact sequence $0\rightarrow J\rightarrow B\rightarrow B/J\rightarrow 0$. 
When $A$ is a von Neumann algebra, this definition does not reflect its $\sigma$-weak topology, and so we have to modify it to define exactness for von Neumann algebras appropriately.

It is well known that a $C^{\ast}$-algebra $A$ is exact if and only if the natural map $(A\otimes B)/(A\otimes J)\rightarrow A\otimes(B/J)$ is isomorphic for any $J\lhd B$. This condition is easily translated as follows.
\begin{itemize}
\item For any $J\lhd B$ and any $\ast$-representation $\pi \colon A\otimes B \rightarrow \mathbb{B}(K)$ with $A\otimes J \subset \ker\pi$, the induced $\ast$-representation $\tilde{\pi} \colon A\odot (B/J) \rightarrow \mathbb{B}(K)$ is min-continuous.
\end{itemize}
In the case where $A$ is a von Neumann algebra, we can reflect the $\sigma$-weak topology of $A$ by restricting above $\pi$ to be normal on $A\otimes \mathbb{C}$.

\begin{Def}[{\cite{wk ex K}}]\upshape\label{wk ex}
\ Let $M$ be a von Neumann algebra. We say $M$ is $weakly$ $exact$ if for any $J\lhd B$ and any $\ast$-representation $\pi \colon M\otimes B \rightarrow \mathbb{B}(K)$ with $A\otimes J \subset\ker\pi$ which is normal on $M\otimes \mathbb{C}$, the induced $\ast$-representation $\tilde{\pi} \colon M\odot (B/J) \rightarrow \mathbb{B}(K)$ is min-continuous.
\end{Def}
We leave it to the reader to verify that it is impossible to interpret this definition in terms of exact sequence. Kirchberg proved that a von Neumann algebra $M$ is weakly exact if it contains a $\sigma$-weakly dense exact $C^{\ast}$-algebra. This follows from a deep fact: exactness is equivalent to property $C$. Here recall a $C^{\ast}$-algebra $A$ has $\it property$ $C$ if the inclusion $A^{\ast\ast}\odot B^{\ast\ast}\hookrightarrow (A\otimes B)^{\ast\ast}$ is isometric on $A^{\ast\ast}\otimes B^{\ast\ast}$ for any $C^{\ast}$-algebra $B$.

Next, we consider property $C'$ and a similar condition for von Neumann algebras. Recall a $C^{\ast}$-algebra $A$ has $\it property$ $C'$ if the inclusion $A\odot B^{\ast\ast}\hookrightarrow (A\otimes B)^{\ast\ast}$ is min-continuous for any $C^{\ast}$-algebra $B$. This condition is equivalent to exactness and Ozawa generalized this equivalence in the von Neumann algebra context ($\cite[\rm Proposition\ 4]{wk ex O}$).
More precisely, he proved that a von Neumann algebra $M$ is weakly exact if and only if for any unital $C^{\ast}$-algebra $B$ and any $\ast$-representation $\pi \colon M\otimes B \rightarrow \mathbb{B}(K)$ which is normal on $M\otimes \mathbb{C}$, the induced $\ast$-representation $\tilde{\pi}\colon M\odot B^{\ast\ast} \rightarrow \mathbb{B}(K)$ is min-continuous.

Exactness for operator spaces can be defined in the same way as that for $C^{\ast}$-algebras, but we use a different manner for our convenience. A finite dimensional operator space $E$ is said to be $\it exact$ (more precisely, 1-$\it exact$) if for any $\epsilon>0$, there exist $n\in\mathbb{N}$, an operator space $F\subset\mathbb{M}_{n}$, and an invertible c.b. map $\phi$ from $E$ onto $F$ such that $\|\phi\|_{{\rm cb}}\|\phi^{-1}\|_{{\rm cb}}\leq1+\epsilon$. 
We say an operator space $X$ is $\it exact$ if any finite dimensional subspace of $X$ is exact. This condition is still equivalent to property $C$.

Now, we prove that the direct limit $\displaystyle\varinjlim_{n}(X_{n},\phi_{n})$ defined in the Subsection $\ref{op sp}$ is exact (keep the setting in Subsection $\ref{op sp}$). For this, we observe that $S$ is exact if it has a norm dense subspace satisfying any finite dimensional subspace is exact (exactness follows from this). Indeed, $S_{0}\subset S$ works since the following map is completely isometric:
\begin{equation*}
\mathbb{M}_{k(1)}\oplus \cdots\mathbb{M}_{k(m-1)}\oplus X_{m}\simeq \{ (x_{n})_{n}\in S_{0}\mid x_{m}\in X_{m} \ {\rm and}\  x_{n+1}=\phi_{n}(x_{n}) \ {\rm for\ any} \  n\geq m\}.
\end{equation*}
Notice that the left hand side is contained in a finite dimensional $C^{\ast}$-algebra and that any finite dimensional subspace of $S_{0}$ is contained in the right hand side for  sufficiently large $m\in\mathbb{N}$.
Finally, our claim ends with the following lemma.
\begin{Lem}
Let $A$ be a unital $C^{\ast}$-algebra, $J\subset A$ be a closed ideal, and $X\subset A$ be a closed subspace containing $J$. If $X$ is exact, then $\pi(X)$ is exact, where $\pi \colon A\rightarrow A/J$ is the quotient map. 
\end{Lem}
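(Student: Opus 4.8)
The plan is to realize $\pi(X)$ concretely inside the $C^{*}$-algebra $A/J$ and to push forward the exactness of $X$ through the quotient map by a lifting argument at the level of finite-dimensional subspaces. So let $E\subset\pi(X)$ be a finite-dimensional subspace and fix $\epsilon>0$; I must produce an embedding of $E$ into some $\mathbb{M}_{n}$ with c.b.\ distortion at most $1+\epsilon$. Choose a finite-dimensional subspace $\widetilde{E}\subset X$ with $\pi(\widetilde{E})=E$, say $\dim\widetilde{E}=\dim E$ by picking linearly independent preimages of a basis of $E$; note that $\pi|_{\widetilde{E}}$ need not be close to an isometry, so the first task is to correct this. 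Since $X\supset J$ and $J=\ker\pi$, the restriction $\pi|_{X}$ is the quotient map $X\to X/J$, and the operator space quotient norm is computed in $A/J$ exactly as for $C^{*}$-algebras; hence for $e\in E$ we have $\|e\|=\inf\{\|x\|: x\in X,\ \pi(x)=e\}$, and likewise at every matrix level. Using this, and compactness of the unit ball of the finite-dimensional space $E$, I can replace $\widetilde{E}$ by a new finite-dimensional subspace $E_{0}\subset X$ on which $\pi|_{E_{0}}\colon E_{0}\to E$ is a complete isomorphism with $\|\pi|_{E_{0}}\|_{\mathrm{cb}}\,\|(\pi|_{E_{0}})^{-1}\|_{\mathrm{cb}}\le 1+\epsilon$: concretely, enlarge $\widetilde{E}$ by adding near-optimal lifts of finitely many matrix-valued elements of the unit balls $(\mathbb{M}_{m}(E))_{1}$ for $m$ up to $\dim E$ (it suffices to control finitely many matrix levels, since a complete contraction factoring through $\mathbb{M}_{n}$ only needs to be tested up to dimension $n$), giving a subspace $E_{0}$ whose image is still $E$ but where the quotient map is now a near-complete-isometry onto $E$.

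Now apply exactness of $X$ to the finite-dimensional subspace $E_{0}\subset X$: there are $n\in\mathbb{N}$, an operator space $F\subset\mathbb{M}_{n}$, and a complete isomorphism $\psi\colon E_{0}\to F$ with $\|\psi\|_{\mathrm{cb}}\|\psi^{-1}\|_{\mathrm{cb}}\le 1+\epsilon$. Compose: the map $\psi\circ(\pi|_{E_{0}})^{-1}\colon E\to F\subset\mathbb{M}_{n}$ is then a complete isomorphism onto its image in $\mathbb{M}_{n}$ with c.b.\ distortion at most $(1+\epsilon)^{2}$. Since $\epsilon>0$ was arbitrary, every finite-dimensional subspace $E$ of $\pi(X)$ is exact, i.e.\ $\pi(X)$ is exact. (If one prefers the constant $1+\epsilon$ exactly rather than $(1+\epsilon)^{2}$, simply start the argument with $\epsilon'$ chosen so that $(1+\epsilon')^{2}\le 1+\epsilon$.)

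The only genuinely substantive point is the first paragraph: that the operator space structure on the algebraic quotient $X/J$ induced from $\pi\colon A\to A/J$ agrees with the intrinsic operator space quotient, so that $\pi|_{X}\colon X\to\pi(X)$ is a complete quotient map, and that consequently one can lift a finite-dimensional subspace of $\pi(X)$ back to a finite-dimensional subspace of $X$ on which $\pi$ is an arbitrarily good complete isomorphism. This rests on the standard fact that for a closed ideal $J\lhd A$ the quotient $C^{*}$-norm on $A/J$ coincides, at every matrix level, with the Banach-space quotient norm of $\mathbb{M}_{m}(A)$ by $\mathbb{M}_{m}(J)$, together with a routine compactness/perturbation argument in the finite-dimensional space $E$ to upgrade a pointwise-good lift to a complete-isomorphism lift on finitely many matrix levels. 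Everything after that is a one-line composition, so I expect no further obstacle.
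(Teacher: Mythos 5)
The first paragraph of your argument is where the proof breaks, and the break is not a technicality. The fact that $\pi|_{X}\colon X\to\pi(X)$ is a complete quotient map (matrix quotient norms of $\mathbb{M}_m(X)/\mathbb{M}_m(J)$ computed inside $\mathbb{M}_m(A/J)$) is indeed fine, since $\mathbb{M}_m(J)\subset\mathbb{M}_m(X)$ and the $C^{*}$-quotient norm is the Banach quotient norm at every matrix level. But the step you call a ``routine compactness/perturbation argument'' --- producing a finite-dimensional $E_{0}\subset X$ with $\pi(E_{0})=E$ and $\pi|_{E_{0}}$ a $(1+\epsilon)$-complete isomorphism onto $E$ --- is exactly the hard point, and as you state it (using nothing about $X$ beyond $J\subset X$) it is false. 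Taking $X=A$, your claim says that every finite-dimensional subspace $E$ of an arbitrary quotient $A/J$ admits a lifting $\phi=(\pi|_{E_{0}})^{-1}\colon E\to A$ with $\|\phi\|_{\mathrm{cb}}\le 1+\epsilon$; composing with arbitrary complete contractions into $E$, this would give every finite-dimensional operator space the local lifting property with constant $1$ for all $C^{*}$-quotients, which by the standard duality $\mathrm{CB}(E,B)\simeq E^{*}\otimes B$ amounts to saying that $E^{*}\otimes A\to E^{*}\otimes(A/J)$ is always a metric surjection, i.e.\ that \emph{every} finite-dimensional operator space has $1$-exact dual. That is known to fail (exactness constants of finite-dimensional spaces can grow with the dimension), and it is precisely this failure that makes local reflexivity/property $C''$ a nontrivial property and makes ``quotients of exact $C^{*}$-algebras are exact'' a theorem of Kirchberg rather than an exercise. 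Your concrete mechanism also does not work as described: enlarging $\widetilde{E}$ by adding further near-optimal lifts puts nonzero elements of $J=\ker\pi$ into $E_{0}$ (differences of two lifts of the same element), so $\pi|_{E_{0}}$ is no longer injective; and the remark about testing only finitely many matrix levels does not help, because the map whose complete norm you must control is $(\pi|_{E_{0}})^{-1}$ with values in $X$, not in a fixed $\mathbb{M}_{n}$, and the $n$ coming from exactness of $X$ is only chosen \emph{after} $E_{0}$.

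Any correct repair of your route has to use exactness of $X$ (equivalently local reflexivity / property $C$ type information) \emph{at the lifting step} --- this is the Effros--Haagerup circle of ideas about locally split extensions --- so it would not be more elementary than what the paper does; you would also have to check it for operator spaces $X$, not just $C^{*}$-algebras, since that is how the lemma is applied to the direct limit. The paper avoids liftings altogether: it takes the central projection $z\in A^{\ast\ast}$ with $zA^{\ast\ast}\simeq J^{\ast\ast}$, notes $zX^{\ast\ast}\subset J^{\ast\ast}\subset X^{\ast\ast}$ so that $(1-z)X^{\ast\ast}\subset X^{\ast\ast}$ and $\pi(X)^{\ast\ast}\simeq(1-z)X^{\ast\ast}$, and then transports the isometry of $X^{\ast\ast}\odot B^{\ast\ast}\to(X\otimes B)^{\ast\ast}$ (property $C$, which is equivalent to exactness of $X$) through the commutative diagram with $(\pi\otimes\mathrm{id}_{B})^{\ast\ast}$ to conclude that $\pi(X)$ has property $C$, hence is exact. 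If you want to keep a local flavour, you should either prove an Effros--Haagerup type local splitting statement for exact $X$, or simply switch to the bidual/property $C$ argument.
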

\begin{proof}
Take the central projection $z\in A^{\ast\ast}$ with $zA^{\ast\ast}\simeq J^{\ast\ast}$. Then, since $zx\in J^{\ast\ast}\subset X^{\ast\ast}$ for any $x\in X^{\ast\ast}$, we have
\begin{equation*}
(1-z)X^{\ast\ast}:=\{ (1-z)x \in A^{\ast\ast} \mid x\in X^{\ast\ast} \}
=\{ (x-zx \in A^{\ast\ast} \mid x\in X^{\ast\ast} \} \subset X^{\ast\ast},
\end{equation*}
(observe that we do not assume $1\in X$). For any $C^{\ast}$-algebra $B$, we have the following commutative diagram:
\[
\begin{CD}
\pi(A)^{\ast\ast}\odot B^{\ast\ast} @>>> (\pi(A)\otimes B)^{\ast\ast}\\
@VVV @AA{(\pi\otimes \mathrm{id}_{B})^{\ast\ast}}A\\
(1-z)A^{\ast\ast}\odot B^{\ast\ast} @>>> (A\otimes B)^{\ast\ast},
\end{CD}
\]
where these maps are defined in the previous subsection. Then, the restriction of the diagram gives the following new diagram:
\[
\begin{CD}
\pi(X)^{\ast\ast}\odot B^{\ast\ast} @>>> (\pi(X)\otimes B)^{\ast\ast}\\
@VVV @AA{(\pi\otimes \mathrm{id}_{B})^{\ast\ast}}A\\
(1-z)X^{\ast\ast}\odot B^{\ast\ast} @>>> (X\otimes B)^{\ast\ast}.
\end{CD}
\]
Now the bottom map is isometric on $(1-z)X^{\ast\ast}\otimes B^{\ast\ast}$ by assumption, and hence the top map is also isometric. This means $\pi(X)$ has property $C$.
\end{proof}

Thus, we proved that $\displaystyle\varinjlim_{n}(X_{n},\phi_{n})$ is exact if each $X_{n}$ is contained in a finite dimensional $C^{\ast}$-algebra completely isometrically.

\subsection{\bf Crossed products and amenable actions}\label{cros}

Let $A\subset \mathbb{B}(H)$ be a $C^{\ast}$-algebra and $\Gamma$ be a discrete group acting on $A$. Consider the following maps:
\begin{alignat*}{3}
\pi &\colon A \longrightarrow \mathbb{B}(H\otimes \ell^{2}(\Gamma))\ &;& \ a \longmapsto \sum_{g\in \Gamma}\alpha_{g^{-1}}(a)\otimes e_{g,g},\\
u &\colon \Gamma \longrightarrow {\cal U}(H\otimes \ell^{2}(\Gamma))\ &;& \ g \longmapsto 1\otimes \lambda_{g},
\end{alignat*}
where $\alpha$ is the $\Gamma$-action, $e_{g,g}$ is the orthogonal projection to $\delta_{g}$, and $\lambda$ is the left regular representation. Define the $\it reduced$ $\it crossed$ $\it product$ $C^{\ast}$-$\it algebra$ of the triple ($A,\Gamma,\alpha$) by
\begin{equation*}
A\rtimes_{r} \Gamma := C^{\ast}(\pi(A),u(\Gamma))\subset \mathbb{B}(H\otimes \ell^{2}(\Gamma)).
\end{equation*}
For a von Neumann algebra $A=:M\subset \mathbb{B}(H)$, define the $\it crossed$ $\it product$ $\it von$ $\it Neumann$ $\it algebra$ by
\begin{equation*}
M\mathbin{\bar{\rtimes}} \Gamma := W^{\ast}(\pi(M),u(\Gamma))\subset \mathbb{B}(H\otimes \ell^{2}(\Gamma)).
\end{equation*}
In the case where $A=\mathbb{C}$, $\mathbb{C}\rtimes_{r} \Gamma$ (respectively, $\mathbb{C}\mathbin{\bar{\rtimes}} \Gamma$) is denoted by $C_{\lambda}^{\ast}(\Gamma)$ (respectively, $L\Gamma$).

The crossed product $M\mathbin{\bar{\rtimes}}\Gamma$ has the canonical faithful normal conditional expectation $E$ onto $M$ defined by $M\mathbin{\bar{\rtimes}}\Gamma\ni x\mapsto (1\otimes e_{e,e})x(1\otimes e_{e,e})\in M$. For a faithful semifinite normal weight $\omega$ on $M$, $\widetilde{\omega}:=\omega\circ E$ is a faithful semifinite normal weight on $M\mathbin{\bar{\rtimes}}\Gamma$. 
Since it is the dual weight of $\omega$ $\cite[\rm Definition\ X.1.16]{Tak 2}$, the GNS-representation of $\widetilde{\omega}$ is isomorphic to $L^{2}(M,\omega)\otimes \ell^{2}(\Gamma)$ as a standard representation and Tomita's conjugation here is given by  $\xi\otimes \delta_{g}\mapsto v_{g}J_{M}\xi\otimes\delta_{g^{-1}}$ ($\cite[\rm Lemma\ X.1.13]{Tak 2}$). Here $J_{M}$ is Tomota's conjugation on $L^{2}(M,\omega)$ and $v_{g}$ is the unique unitary element which satisfies $\alpha_{g}=\mathrm{Ad}v_{g}$ and preserves the standard representation structure of $L^{2}(M,\omega)$ ($\cite[\rm Theorem\ IX.1.14]{Tak 2}$). By simple calculations, it holds
\begin{alignat*}{5}
&J(1\otimes \lambda_{s})J=v_{s}\otimes \rho_{s}& \hspace{1.5em} &(s\in\Gamma),&\\
&J\pi(x)J=J_{M}xJ_{M}\otimes 1&\hspace{1.5em} &(x\in M),&
\end{alignat*}
where $\rho_{s}\delta_{g}:=\delta_{gs^{-1}}$ is the right translation.

To generalize this reduced crossed product construction, we consider arbitrary $\ast$-representation $\pi \colon A \rightarrow \mathbb{B}(K)$ and unitary representation $u \colon \Gamma \rightarrow {\cal U}(K)$. We say $(\pi , u)$ is a $\it covariant$ $\it representation$ if it satisfies $\pi(\alpha_{g}(a))=u_{g}\pi(a)u_{g}^{\ast}$ for any $a\in A$ and $g\in\Gamma$. 
Equivalently, we can define a covariant representation as a $\ast$-representation of $C_{c}(\Gamma,A)$. Here the $\ast$-algebra $C_{c}(\Gamma,A)$ is defined by the set of all compact support maps from $\Gamma$ to $A$ with the involution $(as)^{\ast}:=\alpha_{s^{-1}}(a)s$ and the multiplication 
$(as)(bt):=a\alpha_{s}(b)st$ for $a,b\in A$ and $s,t\in\Gamma$. A $C^{\ast}$-norm is defined on $C_{c}(\Gamma,A)$ by
\begin{equation*}
\|x\|:=\sup \{ \|\phi(x)\| \mid \phi\ {\rm is\ a\ cyclic}\ \ast{\rm \mathchar`-representation\ of}\ C_{c}(\Gamma,A)\}
\end{equation*}
for $x\in C_{c}(\Gamma,A)$. Define the $\it full$ $\it crossed$ $\it product$ $A\rtimes \Gamma$  by the completion of $C_{c}(\Gamma,A)$ with respect to this norm. In the case where $A=\mathbb{C}$, $\mathbb{C}\rtimes \Gamma$ is denoted by $C^{\ast}(\Gamma)$.

For any amenable group $\Gamma$, it is well known that $C_{\lambda}^{\ast}(\Gamma)$ has many useful properties such as $C_{\lambda}^{\ast}(\Gamma)=C^{\ast}(\Gamma)$. 
Amenability of groups can be translated to group actions and we briefly recall it (or see  $\cite{AR}$, $\cite{BO}$). Let $\Gamma$ be a discrete group and $X$ be a compact Hausdorff space. Assume $\Gamma$ acts on $X$ (or equivalently, $\Gamma$ acts on $C(X)$). 
We say the action is $\it amenable$ if there exists a net of continuous maps $m_{i}\colon X\rightarrow \mathrm{Prob}(\Gamma)\subset \ell^{1}(\Gamma)$ such that 
$\displaystyle\lim_{i}\sup_{x\in X}\|s\cdot m_{i}^{x}-m_{i}^{s\cdot x}\|_{1}=0$ for any $s\in \Gamma$. Here $s\cdot m_{i}^{x}(g):=m_{i}^{x}(s^{-1}g)$ and the topology of $\mathrm{Prob}(\Gamma)$ is the $\rm{weak}^{\ast}$ topology induced from $\ell^{1}(\Gamma)\simeq (\ell^{\infty})_{\ast}$. If $\Gamma$ acts on $X$ amenably, we have $C(X)\rtimes_{r}\Gamma=C(X)\rtimes\Gamma$.

The idea of amenable actions can be generalized to actions on $C^{\ast}$-algebras. An action of $\Gamma$ on a $C^{\ast}$-algebra $A$ is said to be $\it amenable$ if the center of $A$ contains $C(X)$ such that the $\Gamma$-action gives an amenable action on $C(X)$. In this case, we have $A\rtimes_{r}\Gamma=A\rtimes\Gamma$.

\subsection{\bf Hilbert modules, Toeplitz--Pimsner algebras and free products}\label{free}

For Hilbert module theory, we refer the reader to Lance's book ($\cite{Lance}$).

Let $A$ be a unital $C^{\ast}$-algebra and $H$ be a $\mathbb{C}$-vector space. Assume $H$ is a right $A$-module (i.e., there exists a ring anti-homomorphism from $A$ to $\mathrm{End}_{\mathbb{C}}(H)$). We say $H$ is a $\it Hilbert$ $A$-$\it module$ if it satisfies following two conditions:
\begin{itemize}
	\item[$(1)$] There exists an $A$-valued map (called an inner product) $\langle\cdot\ ,\cdot\rangle \colon H\times H\rightarrow A$ satisfying following conditions:
	\begin{itemize}
		\item[$(\mathrm{i})$] $\langle\xi,\cdot\rangle$ is $\mathbb{C}$-linear.
		\item[$(\mathrm{ii})$]$\langle\xi,\eta a\rangle=\langle\xi,\eta\rangle a$.
		\item[$(\mathrm{iii})$]$\langle\xi,\eta\rangle^{\ast}=\langle\eta,\xi\rangle$.
		\item[$(\mathrm{iv})$]$0\leq\langle\xi,\xi\rangle$, $\langle\xi,\xi\rangle=0  \Leftrightarrow \xi=0$ $\hspace{1em}(\xi,\eta\in H,\ a\in A)$.
	\end{itemize}
	\item[$(2)$]The space $H$ is complete with respect to the norm $\|\xi\|:=\|\langle\xi,\xi\rangle\|_{A}^{1/2}$, $(\xi\in H)$.
\end{itemize}
We say a map $x \colon H\rightarrow H$ is $\it adjointable$ if there exists a map $x^{\ast} \colon H \rightarrow H$ satisfying $\langle \xi,x\eta\rangle=\langle x^{\ast}\xi,\eta\rangle$ for any $\xi, \eta\in H$. For any adjointable map $x$ on a Hilbert $A$-module $H$, maps $x$ and $x^{\ast}$ are automatically $A$-module homomorphisms. We let $\mathbb{B}(H)$ be the $\ast$-algebra of all adjointable maps on $H$ which is naturally a $C^{\ast}$-algebra with the operator norm.

Let $A$ and $B$ be $C^{\ast}$-algebras and $H$ be a right $B$-module. Let $\pi_{H} \colon A\rightarrow \mathbb{B}(H)$ be a $\ast$-homomorphism and we regard $H$ as a left $A$-module via $\pi_{H}$. We say $H$ is an $A$-$B$ $\it correspondence$ if these actions commute, that is, it holds $(a \xi)b=a(\xi b)$ for $a\in A$, $b\in B$ and $\xi\in H$. 

Let $A,B$ and $C$ be $C^{\ast}$-algebras, $H$ be an $A$-$B$ correspondence with $\pi_{H}$, and $K$ be a $B$-$C$ correspondence with $\pi_{K}$. 
Define a $C$-valued semi inner product on $H\odot K$ by $\langle\xi\otimes\eta,\xi'\otimes\eta'\rangle:=\langle\eta,\pi_{K}(\langle\xi,\xi'\rangle_{H})\eta'\rangle_{K}$ for $\xi, \xi'\in H$ and $\eta,\eta'\in K$, and construct a new Hilbert $C$-module from $H\odot K$ by separation and completion. 
We say the resulting space is the $\it interior$ $\it tensor$ $\it product$ of $H$ and $K$, and denote it by $H\otimes_{B} K$. This space has a left $A$-action defined by $a(\xi\otimes\eta)=a\xi\otimes\eta$ for $a\in A$, $\xi\in H$ and $\eta\in K$, and so it is an $A$-$C$ correspondence.
We note that there exist the following $\ast$-homomorphisms:
\begin{eqnarray*}
\mathbb{B}(H)\longrightarrow \mathbb{B}(H\otimes_{B} K)\ &;&\ x\longmapsto x\otimes1,\\
\pi_{K}(B)' \longrightarrow \mathbb{B}(H\otimes_{B} K)\ &;&\ y\longmapsto 1\otimes y.
\end{eqnarray*}

Next, we define a generalized GNS-representation in a $C^{\ast}$-module setting. Let $A$ be a unital $C^{\ast}$-algebra, $D\subset A$ be a unital $C^{\ast}$-subalgebra, and $E$ be a conditional expectation from $A$ onto $D$. Then, we define an $A$-valued inner product $\langle a,b\rangle:=E(a^{\ast}b)$ on $A$ and construct a new Hilbert $A$-module $L^{2}(A,E)$ from $A$ by separation and completion. 
The left multiplication on $A$ induces the $\ast$-homomorphism $\pi_{E} \colon A\rightarrow \mathbb{B}(L^{2}(A,E))$ with the canonical cyclic vector $\xi_{E}:=\widehat{1_{A}}$ and we denote them by $(\pi_{E},L^{2}(A,E),\xi_{E})$. 
The space $L^{2}(A,E)$ is an $A$-$D$ correspondence with $\pi_{E}$. We say the conditional expectation $E$ is $\it non$-$\it degenerate$ if $\pi_{E}$ is injective. For any state $\omega$  on $D$ and its GNS-representation $L^{2}(D,\omega)$ (which is a $D$-$\mathbb{C}$ correspondence), we have the following natural identification between the Hilbert $\mathbb{C}$-modules (i.e., Hilbert spaces):
\begin{alignat*}{3}
L^{2}(A,E)\otimes_{D}L^{2}(D,\omega)&\xrightarrow{\ \sim\ }& L^{2}(A,\omega\circ E),&\\
\widehat{a}\otimes \widehat{d}\hspace{4em} &\longmapsto& \widehat{ad}.\hspace{2em}&
\end{alignat*}
Summary, we have the following maps:
\begin{alignat*}{4}
A &\xrightarrow{\ \pi_{E}}&\ \mathbb{B}(L^{2}(A,E)) &\longrightarrow&\ \mathbb{B}(L^{2}(A,E)\otimes_{D}L^{2}(D,\omega)) \xrightarrow{\ \sim}\ \mathbb{B}(L^{2}(A,\omega\circ E)),\\
a &\longmapsto& \pi_{E}(a)\hspace{1.7em} &\longmapsto& \pi_{E}(a)\otimes 1. \hspace{13.6em}
\end{alignat*}
Note that the composite map coincides with the GNS-representation of $\omega\circ E$ and the second map is injective if $\omega\circ E$ is non-degenerate. In the case where $D\subset A=:M$ are von Neumann algebras and $E$ and $\omega$ are normal, the inclusion $M\hookrightarrow\mathbb{B}(L^{2}(M,\omega\circ E))$ is normal.

Let $A$ be a unital $C^{\ast}$-algebra and $H$ be an $A$-$A$ correspondence with $\pi_{H}$. Assume $\pi_{H}$ is injective and regard $A\subset \mathbb{B}(H)$. We will define a $C^{\ast}$-algebra from $H$ which is a generalization of the classical Toeplitz algebra due to Pimsner $\cite{Pimsner}$. We first put
\begin{equation*}
H^{\otimes 0}:=A,\  H^{\otimes n}:=\overbrace{H\otimes_{A}\cdots \otimes_{A}H}^{n},\ 
{\cal F}(H):=\bigoplus_{n\geq 0}H^{\otimes n}
\end{equation*}
and define a left $A$-action on ${\cal F}(H)$ by the left multiplication on $H^{\otimes0}$ and  $a(\xi_{1}\otimes\cdots\otimes\xi_{n}):=(a\xi_{1})\otimes\cdots\otimes\xi_{n}$ on $H^{\otimes n}$. Regard $A$ as a subset of $\mathbb{B}({\cal F}(H))$. 
For $\xi\in H$, define an $A$-module homomorphism $T_{\xi} \colon {\cal F}(H)\rightarrow H\otimes_{A}{\cal F}(H)\subset {\cal F}(H)$ by
\begin{eqnarray*}
\left\{
\begin{array}{l}
 H^{\otimes 0} \longrightarrow H\otimes_{A}A\simeq H\ ;\ \widehat{a}\longmapsto \xi\otimes a =\xi a,\\
 H^{\otimes n} \longrightarrow H\otimes_{A}H^{\otimes n}\quad\hspace{0.15em} ; \ \xi_{1}\otimes\cdots\otimes\xi_{n} \longmapsto\xi\otimes\xi_{1}\otimes\cdots\otimes\xi_{n}.
\end{array}
\right.
\end{eqnarray*}
Then, we have a linear map $T \colon H\rightarrow \mathbb{B}(H)$ satisfying $T_{\xi}^{\ast}T_{\eta}=\langle\xi,\eta\rangle$ and $T_{a\xi b}=aT_{\xi}b$ for $a,b\in A$ and $\xi,\eta\in H$. 
Define the $\it Toeplitz$-$\it Pimsner$ $\it algebra$ of $H$ as ${\cal T}(H):=C^{\ast}(A,T_{\xi}\ (\xi\in H))\subset \mathbb{B}({\cal F}(H))$. The algebra ${\cal T}(H)$ has the canonical conditional expectation $E$ onto $A$ whose GNS-representation is isomorphic to the triple (inclusion, ${\cal F}(H), \widehat{1_{A}}$) and 
${\cal T}(H)$ has the universality as following:
\begin{itemize}
\item for any $C^{\ast}$-algebra $B$, any $\ast$-homomorphism $\pi \colon A\rightarrow B$
 and any linear map $\tau \colon H\rightarrow B$ with the relations $\tau(a\xi b)=\pi(a)\tau(\xi)\pi(b)$ and $\tau(\xi)^{\ast}\tau(\eta)=\pi(\langle\xi,\eta\rangle)$ for any $a,b\in A$ and $\xi,\eta\in H$, there exists a $\ast$-homomorphism from ${\cal T}(H)$ to $C^{\ast}(\pi(A),\tau(H))\subset B$ sending $a$ to $\pi(a)$ and $T_{\xi}$ to $\tau(\xi)$ for any $a\in A$ and $\xi\in H$.
\end{itemize}

We mention that ${\cal T}(\mathbb{C})$ is the classical Toeplitz algebra with the vacuum state. In the case where  $A=:M$ is a von Neumann algebra and the left action $M\hookrightarrow \mathbb{B}(H)$ is normal, the inclusion
\begin{equation*}
M\hookrightarrow {\cal T}(M)\subset \mathbb{B}(L^{2}({\cal T}(M),E))\hookrightarrow \mathbb{B}(L^{2}({\cal T}(M),E)\otimes_{M} L^{2}(M,\omega))\simeq \mathbb{B}(L^{2}({\cal T}(M),\omega\circ E))
\end{equation*}
is normal for any faithful normal state $\omega$ on $M$. We will use the inclusion and the von Neumann algebra ${\cal T}(H)''\subset \mathbb{B}(L^{2}({\cal T}(M),\omega\circ E))$ later.

Let $A$ and $B$ be $C^{\ast}$-algebras and $H$ be an $A$-$A$ correspondence. 
Define an $(A\otimes B)$-valued semi inner product on $H\odot B$ by $\langle\xi\otimes\widehat{a},\eta\otimes\widehat{b}\rangle:=\langle\xi,\eta\rangle_{H}\otimes a^{\ast}b\in A\otimes B$ and construct a new Hilbert $(A\otimes B)$-module $H\otimes_{{\rm ext}}B$ by separation and completion. The resulting space is said to be the $\it exterior$ $\it tensor$ $\it product$.
In this case, we naturally have $\mathbb{B}(H)\otimes B\subset \mathbb{B}(H\otimes_{{\rm ext}} B)$ by $(x\otimes a)(\xi\otimes\widehat{b}):=x\xi\otimes\widehat{ab}$ so that $H\otimes_{{\rm ext}}B$ is an $(A\otimes B)$-$(A\otimes B)$ correspondence. It is known ($\cite[\rm Lemma\ 4.6.24]{BO}$) that there exists an isomorphism ${\cal T}(H\otimes_{{\rm ext}} B)\simeq {\cal T}(H)\otimes B$ which extends
\begin{alignat*}{5}
&H\otimes_{{\rm ext}} B&\ \ni&\ \xi\otimes \widehat{b}\longmapsto\mathrm{\xi}\otimes b\ &\in\ &{\cal T}(H)\otimes B,&\\
&\hspace{0.6em}A\otimes B&\ \ni&\ a\otimes b \longmapsto a\otimes b&\in\ &{\cal T}(H)\otimes B.&
\end{alignat*}

Next, we recall amalgamated free products. Let $A_{i}$ $(i\in I)$ be unital $C^{\ast}$-algebras and $D$ be a common unital $C^{\ast}$-subalgebra of all $A_{i}$'s. Assume each $A_{i}$ has a non-degenerate conditional expectation $E_{i}$ onto $D$. 
Denote the GNS-representation of $E_{i}$ by $(\pi_{i},H_{i},\xi_{i})$ and decompose $H_{i}=\xi_{i}D\oplus H_{i}^{o}$ as a Hilbert space, where $H_{i}^{0}:=(\xi_{i}D)^{\perp}$. Note that $H_{i}^{0}$ is naturally a $D$-$D$ correspondence. Define two large $D$-$D$ correspondences by
\begin{eqnarray*}
H:=\Omega D \oplus \bigoplus_{n\geq1}\hspace{0.3em}&\bigoplus_{i_{k}\neq i_{l}\hspace{0.2em} \textrm{for}\hspace{0.2em} k\neq l}& H_{i_{1}}^{0}\otimes_{D}H_{i_{2}}^{0}\cdots \otimes_{D}H_{i_{n}}^{0},\\
H(i):=\Omega D \oplus \bigoplus_{n\geq1}\hspace{0.3em}&\bigoplus_{i\neq i_{1},\hspace{0.2em} i_{k}\neq i_{l}\hspace{0.2em} \textrm{for}\hspace{0.2em} k\neq l}& H_{i_{1}}^{0}\otimes_{D}H_{i_{2}}^{0}\cdots \otimes_{D}H_{i_{n}}^{0},
\end{eqnarray*}
where $\Omega$ is a fixed norm one vector. Next, define unitary operators $U_{i}$ by
\begin{eqnarray*}
U_{i}\colon H_{i}\otimes_{D}H(i)&=&(\xi_{i}D\oplus H_{i}^{0})\otimes_{D}H(i)\\
&\simeq& (\xi_{i}D\otimes_{D}H(i))\oplus(H_{i}^{0}\otimes_{D}H(i))\\
&\simeq& H(i)\oplus(H_{i}^{0}\otimes_{D}H(i))\\
&=& H.
\end{eqnarray*}
We define a left $A_{i}$-action on $H$ by the following injective $\ast$-homomorphism:
\begin{alignat*}{9}
\lambda_{i}\colon&\ A_{i}&&\xrightarrow{\ \pi_{i}\ }&\mathbb{B}(H_{i})&\longrightarrow& \mathbb{B}(H_{i}\otimes_{D}H(i))&\xrightarrow{\mathrm{Ad}U_{i}}&\mathbb{B}(H)\hspace{3em}\\
&\ a&&\longmapsto& \pi_{i}(a)\ &\longmapsto& \pi_{i}(a)\otimes1\hspace{1.5em} &\longmapsto& U_{i}(\pi_{i}(a)\otimes1)U_{i}^{\ast}.
\end{alignat*}
Finally, we define the $\it amalgamated$ $\it free$ $\it product$ $C^{\ast}$-$\it algebra$ of $(A_{i},E_{i})_{i}$ as $C^{\ast}(\lambda_{i}(A_{i}),i\in I)\subset \mathbb{B}(H)$ and we denote it by $\ast_{i\in I}(A_{i},E_{i})$ or $\ast_{D}(A_{i},E_{i})$. We often regard each $A_{i}$ as a subset of $\ast_{i\in I}(A_{i},E_{i})$.

The algebra $\ast_{i\in I}(A_{i},E_{i})$ has the canonical conditional expectation onto $D$ defined by $E(a):=\langle\Omega,a\Omega\rangle$ and its GNS-representation is isomorphic to the triple (inclusion,$H,\Omega)$. More generally, $\ast_{i\in I}(A_{i},E_{i})$ has the canonical conditional expectation onto $\ast_{i\in J}(A_{i},E_{i})$ for any $J\subset I$ and the composite map $\ast_{i\in I}(A_{i},E_{i})\rightarrow \ast_{i\in J}(A_{i},E_{i})\rightarrow D$ coincides with the above $E$, where the second map is the canonical conditional expectation on $\ast_{i\in J}(A_{i},E_{i})$. The restriction of $E$ on $A_{i}$ coincides with $E_{i}$ and $E$ is free in the following sense:
\begin{equation*}
E(a_{1}\cdots a_{n})=0 \quad (a_{i}\in \ker(E_{j_{i}})\subset A_{j_{i}},\ j_{1}\neq j_{2} \cdots \neq j_{n}).
\end{equation*}

In the case where $D\subset A_{i}=:M_{i}$ are von Neumann algebras and each $E_{i}$ is normal, the inclusion
\begin{equation*}
M_{i}\hookrightarrow M\subset \mathbb{B}(L^{2}(M,E))\hookrightarrow \mathbb{B}(L^{2}(M,E)\otimes_{D} L^{2}(D,\omega))\simeq \mathbb{B}(L^{2}(M,\omega\circ E))
\end{equation*}
is normal for any faithful normal state $\omega$ on $M$, where $M=\ast_{i\in I}(M_{i},E_{i})$. Define the $\it amalgamated$ $\it free$ $\it product$ $\it von$ $\it Neumann$ $\it algebra$ of $(M_{i},E_{i})_{i}$ as $M''\subset \mathbb{B}(L^{2}(M,\omega\circ E))$ and denote it by $\bar{\ast}_{i\in I}(M_{i},E_{i})$ or $\bar{\ast}_{D}(M_{i},E_{i})$. Then, the canonical conditional expectation $E$ can be defined by the same manner as above.

Let $\ast_{D}(A_{i},E_{i})$ and $\ast_{D}(B_{i},F_{i})$ be amalgamated free products. Then, for  u.c.p. maps $\phi_{i} \colon A_{i} \rightarrow B_{i}$ with $\phi_{i}\mid_{D}=\mathrm{id}_{D}$ and $E_{i}=F_{i}\circ \phi_{i}$, the free product map $\ast_{i}\phi_{i}$ from $\ast_{D}(A_{i},E_{i})$ to $\ast_{D}(B_{i},F_{i})$ can be defined by the relation $\ast_{i}\phi_{i}(a_{1}a_{2}\cdots a_{n})=\phi_{i_{1}}(a_{1})\phi_{i_{2}}(a_{2})\cdots\phi_{i_{n}}(a_{n})$ for $a_{j}\in A_{i_{j}}$ and $i_{1}\neq i_{2}\cdots\neq i_{n}$ $\cite{Dykema}$. In the case where each $A_{i},B_{i}$ is a von Neumann algebra and each $\phi_{i}$ is normal, $\ast_{i}\phi_{i}$ can be defined as a normal map.

\subsection{\bf Dimension of right modules on finite von Neumann algebras}\label{dim}

Let $M,N$ be von Neumann algebras and $H$ be a Hilbert space. We say $H$ is a $\it right$ (respectively, $\it left$) $M$-$\it module$ if there exists a unital $\ast$-anti-homomorphism $\rho$ (respectively, a $\ast$-homomorphism $\pi$) from $M$ to $\mathbb{B}(H)$. 
We denote these actions by $\rho(x)\xi=\xi x$ (respectively, $\pi(x)\xi=x\xi$) for $x\in M$ and $\xi \in H$. Note the existence of a $\ast$-anti-homomorphism $\rho$ is equivalent to that of a $\ast$-homomorphism $\rho^{\rm op}$ from $M^{\rm op}$ to $\mathbb{B}(H)$, 
where $M^{\rm op}$ is the opposite algebra of $M$ and $\rho^{\rm op}(x)=\rho(x)$ for any $x\in M$. A Hilbert space $H$ is said to be an $M$-$N$-$\it bimodule$ if it is a left $M$-module and is a right $N$-module and these actions commute, that is, $(x\xi)y=x(\xi y)$ for $x\in M$, $y\in N$, and $\xi \in H$. 
For a right (respectively, left) $M$-module $H$, a closed subspace $K\subset H$ is said to be a $\it right$ (respectively, $\it left$) $M$-$\it submodule$ of $H$ if $KM\subset K$ (respectively, $KM\subset K$). Note that a closed subspace $K\subset H$ is a right $M$-submodule if and only if the orthogonal projection to $K$ is contained in $\rho(M)'$, where $\rho$ is the associated $\ast$-anti-homomorphism of $M$.

For a faithful semifinite normal trace state $\tau$ on $M$, the GNS-representation $L^{2}(M,\mathrm{Tr})$ has the canonical right $M$-module structure defined by $\widehat{x}a=\widehat{xa}$\ $(x,a\in M)$, and we denote this right $M$-action by $\rho_{M}$. Since $\rho_{M}$ is faithful, we can regard $M^{\rm op}$ as a subset of $\mathbb{B}(L^{2}(M,\mathrm{Tr}))$ via $\rho_{M}^{\rm op}$. 
It is known that $(M^{\rm op})'=M$.

Let $M$ be a semifinite von Neumann algebra with a faithful semifinite normal trace state $\tau$ and $H$ be a right $M$-module with a $\ast$-anti-homomorphism $\rho \colon M\rightarrow\mathbb{B}(H)$. We will define a kind of dimension of $H$ on $M$. At first, by  $\cite[\rm{Theorem\ IV.5.5}]{Tak 1}$, we decompose $\rho^{\rm op}$ into the following maps:
\begin{alignat*}{5}
&\mathbb{B}(L^{2}) && \mathbb{B}(K\otimes L^{2})&& \mathbb{B}(p(K\otimes L^{2})\  &\xrightarrow{\ \sim\ }&\ \ \mathbb{B}(H)\\
&\ \ \cup&&\hspace{2em}\cup&&\hspace{2.5em}\cup&&\hspace{1.5em}\cup\\
\rho^{{\rm op}} \colon&\ M^{{\rm op}}&\longrightarrow&\ \ 1\otimes M^{{\rm op}}&\longrightarrow&\ p(1\otimes M^{{\rm op}})p\ &\xrightarrow{\ \sim\ }&\ \rho^{{\rm op}}(M^{{\rm op}})=\rho(M)\\
&\ \ x&\longmapsto&\quad 1\otimes x &\longmapsto&\ \ p(1\otimes x)p &\longmapsto&\ \  \rho^{{\rm op}}(x)\ \ \ =\rho(x),
\end{alignat*}
where $L^{2}:=L^{2}(M,\tau)$, $K$ is a separable Hilbert space, and $p$ is a projection in $(\mathbb{C}1\mathbin{\bar{\otimes}}M^{{\rm op}})'=\mathbb{B}(K)\mathbin{\bar{\otimes}}M$. Since the final map is a spacial isomorphism, we have an isometry $p(K\otimes L^{2})\xrightarrow{\sim} H$ as a right $M$-module. 
Since $K\otimes L^{2}$ is isomorphic to $\sum_{n\in\mathbb{N}}\oplus L^{2}$, the Hilbert space $H$ is isomorphic to a right $M$-submodule of a direct sum of $L^{2}(M)$ on which $M$ acts diagonally.

For above $M,\tau,H$, and $p$, we define the $\it dimension$ of $H$ as a right $M$-module by $\widetilde{\mathrm{Tr}}(p)\in [0,\infty]$, where $\widetilde{\mathrm{Tr}}:=\mathrm{Tr}_{\mathbb{B}(K)}\otimes\tau$. 
Note that the dimension depends on the choice of $\tau$ but not of the choice of $p$. Indeed, if $V,W \colon H\hookrightarrow K\otimes L^{2}$ are right $M$-module isometries for a Hilbert space $K$ (possibly non-separable), then we automatically have $WV^{\ast}\in(\mathbb{C}1\mathbin{\bar{\otimes}}M^{{\rm op}})'=\mathbb{B}(K)\mathbin{\bar{\otimes}}M$ and hence
\begin{equation*}
\widetilde{\mathrm{Tr}}(VV^{\ast})=\widetilde{\mathrm{Tr}}(VW^{\ast}WV^{\ast})=
\widetilde{\mathrm{Tr}}((WV^{\ast})^{\ast}WV^{\ast})=
\widetilde{\mathrm{Tr}}(WV^{\ast}(WV^{\ast})^{\ast})=\widetilde{\mathrm{Tr}}(WW^{\ast}).
\end{equation*}

Let $M$ be a semifinite von Neumann algebra with a faithful semifinite normal tracial weight Tr and $p\in M$ be a projection with $\mathrm{Tr}(p)<\infty$. Then, for any projection $q\in M$, the dimension of $qL^{2}(M)p$ on $pMp$ with $\mathrm{Tr}(\cdot)/\mathrm{Tr}(p)$ is smaller than $\mathrm{Tr}(q)/\mathrm{Tr}(p)$. 
To verify this, let $(q_{i})_{i}$ be a maximal family of mutually orthogonal projections in $M$ such that  $q_{i}\leq q$ and $q_{i}\sim p_{i}\leq p$ for some projections $p_{i}\in M$, and observe that 
\begin{equation*}
qL^{2}(M)p=\sum_{i}q_{i}L^{2}(M)p\simeq\sum_{i}\oplus p_{i}L^{2}(M)p=\sum_{i}\oplus p_{i}L^{2}(pMp).
\end{equation*}
Since the dimension of $p_{i}L^{2}(pMp)$ is $\mathrm{Tr}(p_{i})/\mathrm{Tr}(p)=\mathrm{Tr}(q_{i})/\mathrm{Tr}(p)$, the dimension of $qL^{2}(M)p$ is $\sum_{i}\mathrm{Tr}(q_{i})/\mathrm{Tr}(p)=\mathrm{Tr}(\sum_{i}q_{i})/\mathrm{Tr}(p)\leq \mathrm{Tr}(q)/\mathrm{Tr}(p)$.

More generally, for any Hilbert space $K$ and any projection $q\in M\mathbin{\bar{\otimes}}\mathbb{B}(K)$, the dimension of $q(L^{2}(M)p\otimes K)$ on $pMp$ is smaller than $\widetilde{\mathrm{Tr}}(q)/\mathrm{Tr}(p)$, where $\widetilde{\mathrm{Tr}}:=\mathrm{Tr}\otimes\mathrm{Tr}_{\mathbb{B}(K)}$. To see this, choose a maximal family of mutually orthogonal projections $(q_{i})_{i}$ in $M\mathbin{\bar{\otimes}}\mathbb{B}(K)$ such that $q_{i}\leq q$ and $q_{i}\sim p_{i}\leq p\otimes 1$ for some projections $p_{i}\in M\mathbin{\bar{\otimes}}\mathbb{B}(K)$. Then we have a similar isometry
\begin{equation*}
q(L^{2}(M)p\otimes K)=\sum_{i}q_{i}(L^{2}(M)p\otimes K)\simeq \sum_{i}\oplus p_{i}(L^{2}(M)p\otimes K)=\sum_{i}\oplus p_{i}(L^{2}(pMp)\otimes K).
\end{equation*}
Since the dimension of $p_{i}(L^{2}(pMp)\otimes K)$ is $\widetilde{\mathrm{Tr}}(p_{i})/\mathrm{Tr}(p)=\widetilde{\mathrm{Tr}}(q_{i})/\mathrm{Tr}(p)$, the dimension of $q(L^{2}(M)p\otimes K)$ is
 $\sum_{i}\widetilde{\mathrm{Tr}}(q_{i})/\mathrm{Tr}(p)=\mathrm{Tr}(\sum_{i}q_{i})/\mathrm{Tr}(p)\leq \mathrm{Tr}(q)/\mathrm{Tr}(p)$.

\section{\bf Weak Exactness for $C^{\ast}$-algebras}

In this section, we generalize the notion of weak exactness to general $C^{\ast}$-algebras with fixed faithful representations. We then give some fundamental properties of this notion which are quite similar to that of weakly exact von Neumann algebras due to Ozawa.

\subsection{\bf Definition and basic properties}

Let $A$ be a $C^{\ast}$-algebra and $\mathbb{B}(H)$ be a fixed faithful representation of $A$. Although $A$ is not a von Neumann algebra in general, it still has a $\sigma$-weak topology endowed from $\mathbb{B}(H)$. 
Throughout this paper, we assume that $A$ comes together with a $\sigma$-weak topology. Note that,  for any von Neumann algebra $M$ with $A\subset M\subset \mathbb{B}(H)$ (e.g. $M=A''$), the $\sigma$-weak topology of $A$ is induced from only that of $M$.

\begin{Def}\label{def wk ex}\upshape
Let $A$ be a $C^{\ast}$-algebra and $\mathbb{B}(H)$ be a faithful representation of $A$. We say $A$ is $\it weakly$ $\it exact$ in $\mathbb{B}(H)$ 
if for any $J\lhd B$ and any $\ast$-representation $\pi \colon A\otimes B \rightarrow \mathbb{B}(K)$ with $A\otimes J \subset\ker\pi$ which is $\sigma$-weakly continuous on $A\otimes \mathbb{C}$, 
the induced $\ast$-representation $\tilde{\pi} \colon A\odot (B/J) \rightarrow \mathbb{B}(K)$ is min-continuous. More generally, we define weak exactness for any inclusion $A\subset M$, where $M$ is a von Neumann algebra, and say $A$ is $\it weakly$ $\it exact$ in $M$.
\end{Def}
\begin{Rem}\upshape
This definition contains the following two cases:
in the case where $A=M$, our weak exactness is equivalent to Kirchberg's weak exactness for von Neumann algebras; and in the case where $M=\mathbb{B}(H_{u})$, where $H_{u}$ is the universal representation of $A$, our weak exactness is equivalent to exactness of $A$ as a $C^{\ast}$-algebra. We note that any exact $C^{\ast}$-algebra $A$ is weakly exact in any faithful representation of $A$.
\end{Rem}

Since our definition is quite similar to Kirchberg's one, Ozawa's characterization ($\cite[\rm Theorem\ 2]{wk ex O}$) can be easily imported to our situation.
\begin{Thm}\label{we ex thm}
Let $A$ be a $C^{\ast}$-algebra and $\mathbb{B}(H)$ be a faithful representation of $A$.
\begin{itemize}
	\item[$(1)$] The following are equivalent.
	\begin{itemize}
		\item[$(\rm{i})$]The $C^*$-algebra $A$ is weakly exact in $\mathbb{B}(H)$.
		\item[$(\rm{ii})$]For any unital $C^{\ast}$-algebra $B$ and any $\ast$-representation $\pi \colon A\otimes B \rightarrow \mathbb{B}(K)$ which is $\sigma$-weakly continuous on $A\otimes \mathbb{C}$, the induced $\ast$-representation $\tilde{\pi}\colon A\odot B^{\ast\ast} \rightarrow \mathbb{B}(K)$ is min-continuous.
		\item[$(\rm{iii})$]For any operator space $X$, the map $\Theta_{X}\colon A\odot X^{\ast\ast} \rightarrow (A\otimes X)^{\ast\ast}$ is isometric, where $\Theta_{X}$ is as in Subsection $\ref{**}$.
		\item[$(\rm{iv})$]For any operator space $X$ and any finite rank c.c. map $\phi\colon X^{\ast} \rightarrow A$, there exists a net $(\phi_{i})$ of $weak^{\ast}$ continuous finite rank c.c. maps $\phi_{i}\colon X^{\ast} \rightarrow A$ which converges to $\phi$ in the point $\sigma$-weak topology.
		\item[$(\rm{v})$]For any finite dimensional operator space $E\subset A$, there exist nets of c.c. maps $\phi_{i}\colon E \rightarrow \mathbb{M}_{n(i)}$ and $\psi_{i} \colon \phi_{i}(E) \rightarrow A$ such that the net $(\psi_{i} \circ \phi_{i})$ converges to $\mathrm{id}_{E}$ in the point $\sigma$-weak topology.
	\end{itemize}
	\item[$(2)$]If $A$ is weakly exact in $\mathbb{B}(H)$ and $H$ is a separable Hilbert space, then there exist a norm-separable exact operator space $S$ and normal c.c. maps $\phi \colon M \rightarrow S^{\ast\ast}$ and $\psi \colon S^{\ast\ast} \rightarrow M$ such that $\psi\circ\phi=\mathrm{id}_{M}$, where $M$ is the $\sigma$-weak closure of $A$.
\end{itemize}
\end{Thm}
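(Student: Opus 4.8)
This is the literal analogue of Ozawa's characterisation \cite[Theorem~2]{wk ex O}, and I would simply run his arguments, with two systematic changes: replace ``normal on $M\otimes\mathbb{C}$'' by ``$\sigma$-weakly continuous on $A\otimes\mathbb{C}$'', and replace the canonical normal surjection $M^{\ast\ast}\to M$ by the central projection $z_M\in A^{\ast\ast}$ together with the map $\Theta_X$ of Subsection~\ref{**}, which was built precisely so that the double-dual computations go through for a merely $\sigma$-weakly closed $A$. Concretely I would prove the cycle (i)$\Rightarrow$(ii)$\Rightarrow$(iii)$\Rightarrow$(iv)$\Rightarrow$(v)$\Rightarrow$(i): for (i)$\Leftrightarrow$(ii) use that every $B/J$ embeds in $B^{\ast\ast}$ while $B\hookrightarrow B^{\ast\ast}$ recovers the $B/J$-formulation by approximation; for (ii)$\Leftrightarrow$(iii), given a $\ast$-representation $\pi$ of $A\otimes B$ that is $\sigma$-weakly continuous on $A$, take its normal extension $\pi^{\ast\ast}$ and note $\widetilde\pi=\pi^{\ast\ast}\circ\Theta_B$ on $A\odot B^{\ast\ast}$, so min-continuity of all such $\widetilde\pi$ is exactly the statement that $\Theta_B$ (hence $\Theta_X$) is min-isometric on $A\otimes B^{\ast\ast}$; (iii)$\Leftrightarrow$(iv) is the operator space duality from Subsection~\ref{op sp}, under which finite rank maps $X^\ast\to A$ are elements of $X^{\ast\ast}\otimes A$ and the $\mathrm{weak}^\ast$-continuous ones are $X\otimes A$, so that min-isometry of $\Theta_X$ dualises to the approximation statement in (iv); and (iv)$\Leftrightarrow$(v) follows by taking $X=E^\ast$ and using Wittstock extension to pass between maps on $E$ and maps on $X^\ast=E$ factoring through matrix blocks.

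\textbf{Part (2), construction of $S$.} The plan is to feed characterisation (v) into a direct-limit construction, so that exactness of $S$ comes for free from Subsection~\ref{op sp}. Since $H$ is separable, $M_\ast$ is separable; fix a norm-dense sequence $(\omega_j)$ in $M_\ast$, put $\Omega_n=\{\omega_1,\dots,\omega_n\}$ and $\epsilon_n=2^{-n}$. Using Kaplansky density I would build inductively an increasing chain $E_1\subset E_2\subset\cdots$ of finite dimensional subspaces of $A$ with $\bigcup_nE_n$ $\sigma$-weakly dense in $M$, as follows: given $E_n$, apply (v) to $E_n$ to obtain c.c. maps $\phi^{(n)}\colon E_n\to\mathbb{M}_{k(n)}$ and $\psi^{(n)}\colon\phi^{(n)}(E_n)\to A$ with $|\langle(\psi^{(n)}\circ\phi^{(n)}-\mathrm{id})(\xi),\omega\rangle|<\epsilon_n$ for all $\xi$ in the unit ball of $E_n$ and all $\omega\in\Omega_n$ (finite dimensionality of $E_n$ upgrades the point-$\sigma$-weak convergence of (v) to this uniform estimate), and then take $E_{n+1}$ finite dimensional with $E_{n+1}\supseteq E_n+\psi^{(n)}\!\big(\phi^{(n)}(E_n)\big)$ together with enough new elements to keep the union $\sigma$-weakly dense. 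Set $X_n=\phi^{(n)}(E_n)\subset\mathbb{M}_{k(n)}$ and $\phi_n=\phi^{(n+1)}\circ\psi^{(n)}|_{X_n}\colon X_n\to X_{n+1}$, which is c.c. and well defined by the choice of $E_{n+1}$, and put $S=\varinjlim_n(X_n,\phi_n)$; by the discussion in Subsection~\ref{op sp} and the Lemma there, $S$ is a norm-separable exact operator space.

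\textbf{Part (2), the maps.} Fix a free ultrafilter $\mathcal{U}$ on $\mathbb{N}$. Define $\psi_0\colon S\to M$ on coherent representatives by $\psi_0(\pi((z_n)_n))=\sigma\text{-}w\text{-}\lim_{n\to\mathcal{U}}\psi^{(n)}(z_n)$: along a coherent tail the $z_n$ lie in $X_n$ with $\|z_{n+1}\|\le\|z_n\|$, so $(\psi^{(n)}(z_n))$ is bounded, the limit exists, $\psi_0$ is c.c., and it kills $\bigoplus\mathbb{M}_{k(n)}$ and descends to $S$; put $\psi=P\circ\psi_0^{\ast\ast}\colon S^{\ast\ast}\to M$, where $P\colon M^{\ast\ast}\to M$ is the canonical normal projection, so $\psi$ is normal and c.c. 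For $\phi$ I would extend each $\phi^{(n)}$ to a c.c. map $M\to\mathbb{M}_{k(n)}$ (Wittstock), form the c.c. map $\phi^{\flat}\colon M\to S^{\ast\ast}$, $x\mapsto w^\ast\text{-}\lim_{n\to\mathcal{U}}\iota_n(\,\cdot\,)$ built from these extensions (the $\iota_n\colon X_n\to S$ being extended in the same way), and then replace $\phi^{\flat}$ by a genuinely normal map $\phi$ obtained by passing to its normal part, i.e. composing $(\phi^{\flat})^{\ast}|_{S^\ast}$ with the contractive normal-part projection $M^\ast\to M_\ast$ and re-dualising. The identity $\psi\circ\phi=\mathrm{id}_M$ is then checked on the $\sigma$-weakly dense set $\bigcup_nE_n$: for $x\in E_m$ the coherent sequence starting at $\phi^{(n)}(x)$ satisfies $a_{j+1}=\psi^{(j+1)}\!\circ\!\phi^{(j+1)}(a_j)$ with $a_n=\psi^{(n)}\circ\phi^{(n)}(x)$, each step is non-expansive and, tested against $\omega\in M_\ast$, drifts away from $x$ by at most $\sum_{j\ge n}\epsilon_j\to0$ (this is where the adaptive enlargement of $E_{n+1}$ and the matched choices of $\Omega_n,\epsilon_n$ are used); hence $\langle\psi_0^{\ast\ast}(\phi^{\flat}(x)),\omega\rangle=\langle x,\omega\rangle$, so $\psi_0^{\ast\ast}(\phi^{\flat}(x))-x$ is singular, $\psi(\phi^{\flat}(x))=x$, and the same passes to $\phi$; normality of $\psi\circ\phi$ and $\sigma$-weak density then give $\psi\circ\phi=\mathrm{id}_M$.

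\textbf{Main obstacle.} The delicate point of Part~(2) is the normality of $\phi$: the natural candidate $\phi^{\flat}$ is only a pointwise-$\mathrm{weak}^\ast$ ultralimit of non-normal maps and will in general fail to be normal, so one must extract a normal map — via the normal part, or by building $\phi_\ast\colon S^\ast\to M_\ast$ directly — while verifying that this surgery preserves $\psi\circ\phi=\mathrm{id}_M$; entangled with this is the bookkeeping that forces the accumulated \emph{weak} approximation errors living in the direct limit to remain summable (hence to vanish after the ultralimit), for which the interlocked, adaptive choices of $E_n$, $\Omega_n$ and $\epsilon_n$ are essential. Part~(1), by contrast, is routine once one commits to transcribing Ozawa's arguments with $z_M$ and $\Theta_X$ in place of the honest von Neumann algebra $M^{\ast\ast}\to M$.
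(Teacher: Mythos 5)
Your plan for Part (1) defers to Ozawa, as the paper does, but it misses exactly the one implication the paper does \emph{not} defer, and your substitute for it is flawed. The paper's proof consists of the sentence ``everything except (v)$\Rightarrow$(i) is as in Ozawa'' plus an explicit argument for (v)$\Rightarrow$(i): given $z\in E\odot(B/J)$, use that $E_i:=\phi_i(E)\subset\mathbb{M}_{n(i)}$ is $1$-exact, so $(E_i\otimes B)/(E_i\otimes J)\simeq E_i\otimes(B/J)$ canonically, push $z$ through $\psi_i\otimes\mathrm{id}$, and conclude $\|\tilde\pi(z)\|\leq\liminf_i\|\tilde\pi\circ((\psi_i\circ\phi_i)\otimes\mathrm{id})(z)\|\leq\|z\|$ by $\sigma$-weak lower semicontinuity of the norm. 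Your proposal contains no such argument; instead you close the cycle with ``(iv)$\Leftrightarrow$(v) by taking $X=E^{\ast}$ and Wittstock.'' This does not work in either direction: for $X=E^{\ast}$ the dual $X^{\ast}=E$ is finite dimensional, so \emph{every} finite rank map $X^{\ast}\to A$ is automatically $\mathrm{weak}^{\ast}$-continuous and condition (iv) is vacuous for that choice of $X$ — it yields no matrix factorization whatsoever; and in the other direction Wittstock--Arveson extension of $\psi_i\colon\phi_i(E)\to A$ to all of $\mathbb{M}_{n(i)}$ requires the \emph{target} to be injective, which $A$ is not (the extension lands only in $\mathbb{B}(H)$). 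So the link between (v) and the remaining conditions — which is precisely the new content of the theorem — is not established in your write-up.

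For Part (2) the paper again defers wholesale to Ozawa, so attempting an explicit construction is legitimate, and your direct-limit $S=\varinjlim(X_n,\phi_n)$ with $X_n=\phi^{(n)}(E_n)$, the adaptive enlargement $E_{n+1}\supseteq\psi^{(n)}(\phi^{(n)}(E_n))$, and the map $\psi=P\circ\psi_0^{\ast\ast}$ are all in the right spirit. But the step you yourself flag as the main obstacle — normality of $\phi$ — is not actually resolved. First, ``extending $\iota_n\colon X_n\to S$ in the same way'' is not available: $S$ (and $S^{\ast\ast}$) is not injective, so there is no Wittstock extension $\mathbb{M}_{k(n)}\to S^{\ast\ast}$, and hence your candidate $\phi^{\flat}$ is not yet defined on $M$. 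Second, even granting some c.c. $\phi^{\flat}\colon M\to S^{\ast\ast}$ with $\psi\circ\phi^{\flat}=\mathrm{id}$ on $\bigcup_nE_n$, replacing $\phi^{\flat}$ by its ``normal part'' $\phi:=\bigl(N\circ(\phi^{\flat})^{\ast}|_{S^{\ast}}\bigr)^{\ast}$ does not obviously preserve the identity: for $\omega\in M_{\ast}$ and $x\in\bigcup_nE_n$ one has $\omega(\psi\phi(x))-\omega(\psi\phi^{\flat}(x))=\bigl((\phi^{\flat})^{\ast}(\psi_0^{\ast}\omega)\bigr)_{\mathrm{sing}}(x)$, the singular part of a functional evaluated at $x$, and there is no reason for this to vanish; asserting ``the same passes to $\phi$'' is exactly the missing argument. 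So, relative to the paper, Part (1) of your proposal lacks the paper's key lemma ((v)$\Rightarrow$(i)) and Part (2) contains a genuine gap at the normality of $\phi$ that the paper avoids only by citing Ozawa's proof.
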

\begin{proof}
Except for $(\rm{v})\Rightarrow (\rm{i})$ in (1), the proof of this theorem is almost same as Ozawa's one, and hence is omitted (use first three subsections in Section $\ref{pre}$, if necessary). 
To see $(\rm{v})\Rightarrow (\rm{i})$, let $B,J,\pi$, and $\mathbb{B}(K)$ be as in Definition $\ref{def wk ex}$. Take any $z\in A\odot (B/J)$ and choose a finite dimensional operator space $E\subset A$ such that $z\in E\odot (B/J)$. Then we can find nets of c.c. maps $\phi_{i}$ and $\psi_{i}$ for $E$ as in $(\mathrm{v})$. Put $E_{i}:=\phi_{i}(E)$. Since $\mathbb{M}_{n(i)}$ is exact, $E_{i}$ is 1-exact as an operator space, and so we have the canonical isometry
\begin{equation*}
\frac{E_{i}\otimes B}{E_{i}\otimes J}\simeq E_{i}\otimes (B/J).
\end{equation*}
Now, using this identification, the composite map  $\tilde{\pi}\circ(\psi_{i}\otimes\mathrm{id})$ coincides with the induced map on $(E_{i}\otimes B)/(E_{i}\otimes J)$ from $\pi\circ (\psi_{i}\otimes \mathrm{id})$, and so is bounded. 
Since $\tilde{\pi}\circ((\psi_{i}\circ\phi_{i})\otimes \mathrm{id})(z)$ converges to $\tilde{\pi}(z)$ $\sigma$-weakly and $\tilde{\pi}\circ((\psi_{i}\circ\phi_{i})\otimes \mathrm{id})$ is contraction, we have
\begin{equation*}
\|\tilde{\pi}(z)\| \leq \displaystyle\liminf_{i}\| \tilde{\pi}\circ((\psi_{i}\circ\phi_{i})\otimes \mathrm{id})(z)\| \leq \|z\|.
\end{equation*}
This implies boundedness of $\tilde{\pi}$.
\end{proof}

\begin{Rem}\upshape\label{rem v'}
By the proof of (v)$\Rightarrow$(i) in (1), the conditions in (1) above is equivalent to the following condition.
\begin{itemize}
	\item[$(\rm{v}')$]There exists a norm dense subspace $A_{0}\subset A$ such that for any finite-dimensional operator space of $A_{0}$, a family of c.c. maps $\phi_{i}$ and $\psi_{i}$ as in condition (v) above exist.
\end{itemize}
\end{Rem}

For any von Neumann algebra $M$ with separable predual, the existence of $S$ above is equivalent to weak exactness of $M$ by Ozawa's original theorem. Hence we have the main theorem in this section.

\begin{Cor}\label{wk ex main}
Let $A\subset \mathbb{B}(H)$ be a $C^{\ast}$-algebra with a fixed faithful representation.  If $H$ is separable and $A$ is weakly exact in $\mathbb{B}(H)$, then the $\sigma$-weak closure of $A$ is weakly exact.
\end{Cor}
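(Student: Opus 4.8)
The plan is to read off the Corollary from part (2) of Theorem \ref{we ex thm} combined with Ozawa's characterization of weak exactness for von Neumann algebras with separable predual; all the substantive work has already been done in establishing Theorem \ref{we ex thm}(2), and what remains is to package it correctly. Write $M$ for the $\sigma$-weak closure of $A$ in $\mathbb{B}(H)$; we must show that $M$ is weakly exact, equivalently (by the Remark following Definition \ref{def wk ex}) that $M$ is weakly exact in $M$ in the sense of Definition \ref{wk ex}.

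First I would note that, $H$ being separable, $\mathbb{B}(H)$ — and hence its von Neumann subalgebra $M$ — has separable predual, so $M$ lies in the scope of Ozawa's theorem. Next, the hypotheses of Theorem \ref{we ex thm}(2) are exactly our hypotheses ($A$ weakly exact in $\mathbb{B}(H)$ and $H$ separable), so that theorem furnishes a norm-separable \emph{exact} operator space $S$ together with normal completely contractive maps $\phi\colon M\to S^{**}$ and $\psi\colon S^{**}\to M$ satisfying $\psi\circ\phi=\mathrm{id}_M$. Finally, by Ozawa's original theorem (\cite{wk ex O}), for a von Neumann algebra with separable predual the existence of such a factorization of the identity through the bidual of a separable exact operator space, by normal c.c.\ maps, is equivalent to weak exactness; hence $M$ is weakly exact, as claimed.

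Thus the Corollary itself is essentially a bookkeeping step, and the genuine obstacle lives inside Theorem \ref{we ex thm}(2): one must upgrade the \emph{approximate} matrix factorizations of $\mathrm{id}_E$ for finite-dimensional $E\subset A$ (condition (v)) into a single honest factorization of $\mathrm{id}_M$. This is precisely the ``not straightforward to extend on $M$'' difficulty flagged in the introduction, and the mechanism that overcomes it is to organize the approximating blocks $\mathbb{M}_{n(i)}$ into a direct system of operator spaces, pass to the direct limit $S=\varinjlim_n(X_n,\phi_n)$ — which is exact by the argument of Subsection \ref{op sp} — and then to its bidual $S^{**}$; separability of the predual is what lets one work with a countable such system and obtain normal limiting maps $\phi,\psi$. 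Relative to that, deducing weak exactness of $M$ from the factorization, via Ozawa's characterization, is immediate.
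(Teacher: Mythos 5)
Your proposal is correct and matches the paper's own argument: the paper deduces the Corollary exactly by applying Theorem \ref{we ex thm}(2) to get the factorization $\psi\circ\phi=\mathrm{id}_M$ through $S^{**}$ for a separable exact operator space $S$, and then invoking Ozawa's original theorem that, for a von Neumann algebra with separable predual, the existence of such $S$ is equivalent to weak exactness. Your additional remarks about how Theorem \ref{we ex thm}(2) itself is established are commentary beyond what the Corollary requires, but they do not affect the correctness of the deduction.
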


As we mentioned, a von Neumann algebra is weakly exact if it contains an exact $C^{\ast}$-algebra which is $\sigma$-weakly dense in the von Neumann algebra. Here we give a partial converse but is not useful.
\begin{Cor}
If M is weakly exact von Neumann algebra with separable predual, then M has a norm-separable $\sigma$-weakly dense $C^{\ast}$-subalgebra which is weakly exact in M.
\end{Cor}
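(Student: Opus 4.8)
The plan is to exhibit a norm-separable $\sigma$-weakly dense $C^{\ast}$-subalgebra $A\subset M$ and to check that it satisfies condition $(\mathrm{v}')$ of Remark $\ref{rem v'}$ (recall that weak exactness of $A$ in $M$ is governed by the $\sigma$-weak topology $A$ inherits from $M$, so Theorem $\ref{we ex thm}$ and Remark $\ref{rem v'}$ apply to $A$). Since $M$ is weakly exact, it satisfies condition $(\mathrm{v})$ of Theorem $\ref{we ex thm}$: for every finite-dimensional operator space $E\subset M$ there is a net of c.c. maps $\phi_{i}\colon E\rightarrow \mathbb{M}_{n(i)}$ and $\psi_{i}\colon \phi_{i}(E)\rightarrow M$ with $\psi_{i}\circ\phi_{i}\rightarrow\mathrm{id}_{E}$ in the point $\sigma$-weak topology. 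Because $M$ has separable predual, bounded subsets of $M$ are $\sigma$-weakly metrizable; since $E$ is finite-dimensional, fixing a basis shows that point $\sigma$-weak convergence of a bounded net of maps $E\rightarrow M$ is controlled by finitely many metrizable coordinates, so one may extract a \emph{sequence} $(\phi^{E}_{m},\psi^{E}_{m})_{m\in\mathbb{N}}$ of such maps with $\psi^{E}_{m}\circ\phi^{E}_{m}\rightarrow\mathrm{id}_{E}$ in the point $\sigma$-weak topology. Fix one such sequence for every finite-dimensional $E\subset M$; the gain is that each $E$ now carries only countably many finite-dimensional ``images'' $R^{E}_{m}:=\psi^{E}_{m}(\phi^{E}_{m}(E))\subset M$.

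Next I would build $A$ by a closing-off argument. Fix a $\sigma$-weakly dense sequence $(y_{j})_{j}$ in $M$. By recursion, using a standard bookkeeping argument over $\mathbb{N}\times\mathbb{N}$, construct an increasing chain $F_{0}\subset F_{1}\subset\cdots$ of finite-dimensional operator subspaces of $M$ such that:
\begin{itemize}
\item[$(\mathrm{a})$] every $y_{j}$ lies in some $F_{n}$;
\item[$(\mathrm{b})$] for every $n$ there are $n',n''$ with $F_{n}^{\ast}\subset F_{n'}$ and $F_{n}F_{n}\subset F_{n''}$, so that $A_{0}:=\bigcup_{n}F_{n}$ is a $\ast$-subalgebra of $M$;
\item[$(\mathrm{c})$] for every $n$ and every $m$, the finite-dimensional space $R^{F_{n}}_{m}$ is contained in some $F_{s}$.
\end{itemize}
At each stage one adjoins only a single $y_{j}$, one adjoint- or product-requirement, and one space $R^{F_{k}}_{m}$ with $k$ already constructed, so each $F_{n}$ remains finite-dimensional. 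Put $A:=\overline{A_{0}}^{\,\|\cdot\|}$, a norm-separable $C^{\ast}$-subalgebra of $M$; since $A$ contains $(y_{j})$, it is $\sigma$-weakly dense in $M$.

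Finally I would verify $(\mathrm{v}')$ with $A_{0}$ as the norm-dense subspace of $A$. Let $E\subset A_{0}$ be finite-dimensional; by $(\mathrm{b})$ and nesting, $E\subset F_{n}$ for some $n$. Restricting the fixed sequence for $F_{n}$ yields c.c. maps $\phi_{m}:=\phi^{F_{n}}_{m}|_{E}\colon E\rightarrow\mathbb{M}_{k(n,m)}$ and $\psi_{m}:=\psi^{F_{n}}_{m}|_{\phi^{F_{n}}_{m}(E)}\colon \phi_{m}(E)\rightarrow M$, and $\psi_{m}(\phi_{m}(E))\subset R^{F_{n}}_{m}\subset A$ by $(\mathrm{c})$; since the operator space structure of $A$ is the one inherited from $M$, these are c.c. maps $E\rightarrow\mathbb{M}_{k(n,m)}$ and $\phi_{m}(E)\rightarrow A$, and $\psi_{m}\circ\phi_{m}=(\psi^{F_{n}}_{m}\circ\phi^{F_{n}}_{m})|_{E}\rightarrow\mathrm{id}_{E}$ in the point $\sigma$-weak topology. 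This is exactly the data required in condition $(\mathrm{v})$ for the finite-dimensional $E\subset A$, so by Remark $\ref{rem v'}$ and Theorem $\ref{we ex thm}$ the algebra $A$ is weakly exact in $M$.

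I expect the only genuine point to be the passage from the net in condition $(\mathrm{v})$ to a sequence: separability of the predual is essential there, since otherwise the approximating family could be genuinely uncountable and no separable $A$ could absorb all the images $R^{E}_{m}$. The remaining ingredient --- the bookkeeping that keeps $A$ norm-separable while simultaneously closing it off under the involution, products, and the images of all the chosen approximating maps --- is routine.
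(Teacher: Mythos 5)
Your proof is correct, but it takes a genuinely different route from the paper's. The paper deduces the corollary in one line from part (2) of Theorem \ref{we ex thm}: it takes the norm-separable exact operator space $S$ and the normal c.c. maps $\phi\colon M\to S^{\ast\ast}$, $\psi\colon S^{\ast\ast}\to M$ with $\psi\circ\phi=\mathrm{id}_{M}$, sets $A:=C^{\ast}(\psi(S))$, and notes that $\psi(S)$ is $\sigma$-weakly dense; the verification that this $A$ is weakly exact in $M$ is left to the reader and rests on exactness of $S$. You instead bypass part (2) entirely and argue from the approximation condition (v) of part (1) together with Remark \ref{rem v'}. The two genuinely new points in your argument are sound: the passage from nets to sequences is legitimate because the composites $\psi_{i}\circ\phi_{i}$ are uniformly bounded and bounded subsets of $M$ are $\sigma$-weakly metrizable when the predual is separable (and convergence on a basis of $E$ plus linearity gives point-$\sigma$-weak convergence on all of $E$); and the closing-off recursion produces a norm-separable $\ast$-subalgebra $A_{0}$ absorbing the countably many finite-dimensional images $R^{F_{n}}_{m}$, after which condition ($\mathrm{v}'$) for $A=\overline{A_{0}}$ follows by restriction, exactly as you say. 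As for what each approach buys: the paper's argument is shorter and exhibits $A$ as generated by the image of a separable exact operator space, tying the corollary directly to Ozawa's characterization, but it invokes the deeper statement (2) and leaves a nontrivial check implicit; yours is more elementary and self-contained, using only (1)(v), Remark \ref{rem v'} and separability of the predual, and it is more flexible (one can force $A$ to contain any prescribed separable subset of $M$), at the cost of the routine bookkeeping. Two cosmetic points if you write it up: you only need to choose the approximating sequences for the countably many $F_{n}$ as they are constructed, so no global choice over all finite-dimensional subspaces of $M$ is needed, and you may adjoin $1_{M}$ to $F_{0}$ if a unital $A$ is desired.
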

\begin{proof}
Take $S$ and $\psi$ in the second statement in the previous theorem and put $A:=C^{\ast}(\psi(S))$. Then $A$ does the work (note that $\psi(S)\subset M$ is $\sigma$-weakly dense).
\end{proof}

For group von Neumann algebras, our weak exactness is equivalent to exactness of groups in the following sense.
\begin{Cor}
For any discrete group $\Gamma$, the following are equivalent.
\begin{itemize}
	\item[$(1)$]The group $\Gamma$ is exact $\rm{(}$i.e. $C_{\lambda}^{\ast}(\Gamma)$ is exact$\rm{)}$.
	\item[$(2)$]The $C^*$-algebra $C_{\lambda}^{\ast}(\Gamma)$ is weakly exact in $\mathbb{B}(\ell^{2}(\Gamma))$.
	\item[$(3)$]The von Neumann algebra $L\Gamma$ is weakly exact.
\end{itemize}
\end{Cor}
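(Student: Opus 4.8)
The plan is to prove the cycle $(1)\Rightarrow(2)\Rightarrow(3)\Rightarrow(1)$, in which only the last arrow needs input beyond the machinery of this section. For $(1)\Rightarrow(2)$, I would just invoke the Remark following Definition \ref{def wk ex}: an exact $C^*$-algebra is weakly exact in every faithful representation, so in particular $C_\lambda^*(\Gamma)$ is weakly exact in $\mathbb{B}(\ell^2(\Gamma))$. For $(2)\Rightarrow(3)$, observe that the $\sigma$-weak closure of $C_\lambda^*(\Gamma)$ inside $\mathbb{B}(\ell^2(\Gamma))$ is exactly $L\Gamma$; when $\Gamma$ is countable, $\ell^2(\Gamma)$ is separable, so Corollary \ref{wk ex main} applies verbatim and gives that $L\Gamma$ is weakly exact.

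For an arbitrary discrete $\Gamma$ the separability hypothesis in $(2)\Rightarrow(3)$ is dealt with by passing to countable subgroups. Exactness of a group is detected by its countable subgroups, and for countable $\Lambda\le\Gamma$ one checks that weak exactness of $C_\lambda^*(\Gamma)$ in $\mathbb{B}(\ell^2(\Gamma))$ restricts to weak exactness of $C_\lambda^*(\Lambda)$. The cleanest way to see this is via condition (iii) of Theorem \ref{we ex thm}: after identifying $(C_\lambda^*(\Lambda)\otimes X)^{\ast\ast}$ with a subspace of $(C_\lambda^*(\Gamma)\otimes X)^{\ast\ast}$ (the second adjoint of the canonical inclusion, completely isometric by injectivity of $\otimes$), the map $\Theta_X$ attached to $C_\lambda^*(\Lambda)$ becomes the restriction of the map $\Theta_X$ attached to $C_\lambda^*(\Gamma)$ — here one uses that the $\sigma$-weak topology induced on $C_\lambda^*(\Lambda)$ from $\ell^2(\Gamma)$ agrees with the one coming from $L\Lambda$ — so if the latter is isometric then so is the former, for every operator space $X$. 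Applying the countable case of $(2)\Rightarrow(3)$ (and then $(3)\Rightarrow(1)$) to each $\Lambda$ shows $\Gamma$ is exact, and $(3)$ follows, e.g. by Kirchberg's classical result that a von Neumann algebra with a $\sigma$-weakly dense exact $C^*$-subalgebra is weakly exact.

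The arrow $(3)\Rightarrow(1)$ is Ozawa's theorem $\cite{wk ex O}$, which I would cite rather than reprove; it is the only substantive ingredient, and it genuinely uses the group structure. One cannot extract it from the abstract part of the section: weak exactness of $L\Gamma$ produces, via Theorem \ref{we ex thm}, point $\sigma$-weak factorizations of $\mathrm{id}_{L\Gamma}$ through matrix spaces (or a factorization through $S^{\ast\ast}$ for a separable exact operator space $S$), but restricting these to the $\sigma$-weakly dense subalgebra $C_\lambda^*(\Gamma)$ does not yield exactness on the nose — the second leg of each factorization is defined only on a subspace of a matrix algebra, and $S^{\ast\ast}$ need not be exact (e.g. $\mathbb{K}$ is exact but $\mathbb{K}^{\ast\ast}=\mathbb{B}$ is not). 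Ozawa's argument supplies the missing step by averaging the restricted factorizations over the translation action of $\Gamma$, turning point $\sigma$-weak convergence into point-norm convergence of genuine matrix factorizations and hence forcing $C_\lambda^*(\Gamma)$ to be exact.

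So the real obstacle lies outside this proof: it is hidden in Corollary \ref{wk ex main} (which rests on Ozawa's characterization of weakly exact von Neumann algebras) and in Ozawa's theorem $(3)\Rightarrow(1)$. The content of the present corollary is the packaging — it inserts condition $(2)$, a property of the reduced group $C^*$-algebra in its standard representation, as a common equivalent interpolating between exactness of $\Gamma$ and weak exactness of $L\Gamma$.
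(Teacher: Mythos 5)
Your architecture agrees with the paper's for countable $\Gamma$: $(1)\Rightarrow(2)$ is the remark after Definition \ref{def wk ex}, $(2)\Rightarrow(3)$ is Corollary \ref{wk ex main}, and $(3)\Rightarrow(1)$ is Ozawa's theorem, cited rather than reproved. For uncountable $\Gamma$ the paper is shorter than you are: it observes that Ozawa's proof of $(3)\Rightarrow(1)$ applies verbatim to condition $(2)$, giving $(2)\Rightarrow(1)$ for arbitrary $\Gamma$, and the cycle is then closed by the fact, quoted just before the corollary, that a von Neumann algebra containing a $\sigma$-weakly dense exact $C^{\ast}$-algebra is weakly exact. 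Your alternative --- reduction to countable subgroups --- can be made to work, but the justification you give for the restriction step is not correct as stated. Writing $\iota\colon C^{\ast}_{\lambda}(\Lambda)\hookrightarrow C^{\ast}_{\lambda}(\Gamma)$, the map $\Theta_{X}$ attached to $C^{\ast}_{\lambda}(\Lambda)$, pushed forward by $(\iota\otimes\mathrm{id}_{X})^{\ast\ast}$, sends $a\otimes x$ to $\iota^{\ast\ast}(z_{L\Lambda})a\otimes x$, whereas the restriction of the $\Theta_{X}$ attached to $C^{\ast}_{\lambda}(\Gamma)$ sends it to $z_{L\Gamma}a\otimes x$. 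These are different maps: since $\lambda_{\Gamma}|_{\Lambda}$ is a multiple of $\lambda_{\Lambda}$, one only gets $\iota^{\ast\ast}(z_{L\Lambda})\geq z_{L\Gamma}$, and the inequality is strict in general --- already for finite $\Lambda$ and infinite $\Gamma$ one has $\iota^{\ast\ast}(z_{L\Lambda})=1\neq z_{L\Gamma}$. This matters for the inequality you actually need: the nontrivial half of condition $(\mathrm{iii})$ is the upper bound $\|\Theta_{X}(u)\|\leq\|u\|_{\min}$ (the lower bound is automatic, by composing $\Theta_{X}$ with the normal extension of the spatial representation $a\otimes b\mapsto a\otimes b$), and the discrepancy sitting under the projection $\iota^{\ast\ast}(z_{L\Lambda})-z_{L\Gamma}$ is exactly the part that weak exactness of $C^{\ast}_{\lambda}(\Gamma)$ in $\mathbb{B}(\ell^{2}(\Gamma))$ does not control; so isometry of the big $\Theta_{X}$ does not simply restrict.

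The reduction is easily repaired inside the toolkit of the section: verify condition $(\mathrm{v})$ of Theorem \ref{we ex thm} (or $(\mathrm{v}')$ of Remark \ref{rem v'}) for $C^{\ast}_{\lambda}(\Lambda)$ directly. Given a finite dimensional $E\subset C^{\ast}_{\lambda}(\Lambda)$, take the maps $\phi_{i},\psi_{i}$ furnished by condition $(\mathrm{v})$ for $C^{\ast}_{\lambda}(\Gamma)$ and replace $\psi_{i}$ by $E_{\Lambda}\circ\psi_{i}$, where $E_{\Lambda}$ is the compression by the orthogonal projection of $\ell^{2}(\Gamma)$ onto $\ell^{2}(\Lambda)$: it is the restriction to $C^{\ast}_{\lambda}(\Gamma)$ of the normal conditional expectation $L\Gamma\rightarrow L\Lambda$, it maps $C^{\ast}_{\lambda}(\Gamma)$ into $C^{\ast}_{\lambda}(\Lambda)$, and its normality, together with the observation you already make (the $\sigma$-weak topology induced on $C^{\ast}_{\lambda}(\Lambda)$ from $\mathbb{B}(\ell^{2}(\Gamma))$ agrees with the one from $\mathbb{B}(\ell^{2}(\Lambda))$, the representation being an amplification), gives $E_{\Lambda}\circ\psi_{i}\circ\phi_{i}\rightarrow\mathrm{id}_{E}$ in the point $\sigma$-weak topology. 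Hence $C^{\ast}_{\lambda}(\Lambda)$ is weakly exact in $\mathbb{B}(\ell^{2}(\Lambda))$, and the remainder of your argument (the countable case, detection of exactness by countable subgroups, and Kirchberg's $(1)\Rightarrow(3)$) goes through unchanged.
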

\begin{proof}
We know (1)$\Rightarrow$(2), and (2)$\Rightarrow$(3) is from Corollary $\ref{wk ex main}$ for countable $\Gamma$. The implication (3)$\Rightarrow$(1) is due to Ozawa $\cite[\rm Corollary\ 3]{wk ex O}$ and this proof also shows the implication (2)$\Rightarrow$(1) for general $\Gamma$.
\end{proof}

\section{\bf Some Permanence Properties for Weak Exactness}

In this section, we prove some permanence properties which are inspired from those of exact groups. Our weak exactness will be used to prove them for both $C^{\ast}$-algebras and von Neumann algebras.

\subsection{\bf Tensor products, crossed products, and increasing unions}

Exactness of groups is preserved under extensions and increasing unions. Corresponding results for von Neumann algebras will be studied.

Before proceeding, we recall Takesaki's conditional expectation theorem $\cite[\rm{Theorem\ IX.4.2}]{Tak 2}$:
\begin{itemize}
\item for von Neumann algebras $N\subset M$ (with common unit) and a faithful normal state $\omega$ on $M$, $M$ has a conditional expectation onto $N$ preserving $\omega$ if and only if $\sigma^{\omega}_{t}(N)=N$ for any $t\in\mathbb{R}$, where $\sigma^{\omega}$ is the modular automorphism group of $\omega$. 
This $\omega$-preserving conditional expectation is unique and is automatically faithful and normal.
\end{itemize}
Note that this conditional expectation $E$ satisfies $\omega(xy)=\omega(E(x)y)$ for any $x\in M$ and $y\in N$, and $E$ is determined by the relation.

Here we give a simple corollary which we need in this section.
\begin{Lem}\label{Tak lem}
For any von Neumann algebra M, there exists a net of normal (possibly non-unital) conditional expectations $(E_{i})_{i}$ on M such that each image of $E_{i}$ has separable predual and $E_{i}\rightarrow \mathrm{id}_{M}$ in the point $\sigma$-weak topology.
\end{Lem}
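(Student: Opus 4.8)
The plan is to produce the net $(E_i)$ by combining two observations: first, that a von Neumann algebra $M$ with a faithful normal state admits compressions/expectations onto "small" subalgebras, and second, that by cutting down with central projections one reduces to the case where $M$ itself has a faithful normal state, which every $\sigma$-finite von Neumann algebra does. So first I would reduce to the $\sigma$-finite case: write $1_M=\sum_{k} p_k$ as an orthogonal sum of $\sigma$-finite central (or merely cyclic) projections; more simply, let $(p_\lambda)$ be an increasing net of $\sigma$-finite projections in the center $Z(M)$ with $\sup_\lambda p_\lambda=1$ (such a net exists by a standard maximality argument, and the maps $x\mapsto p_\lambda x p_\lambda=p_\lambda x$ are normal, unital onto $M p_\lambda$, and converge to $\mathrm{id}_M$ point $\sigma$-weakly). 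Since a composition of two such nets of normal conditional expectations, suitably ordered, is again such a net, it suffices to treat each $M p_\lambda$; replacing $M$ by $M p_\lambda$, we may assume $M$ is $\sigma$-finite and fix a faithful normal state $\omega$ on $M$.

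Next, with $\omega$ fixed, I would build for each finite subset $F\subset M$ (or each separable subset $S_0\subset M$) a $\sigma^\omega$-invariant von Neumann subalgebra $N$ of $M$ with separable predual containing $F$. This is the key step and uses Takesaki's theorem quoted just above the lemma: one constructs $N$ as an increasing-union closure. Start with the countable set $F$; repeatedly enlarge by throwing in $\{\sigma^\omega_t(x) : t\in\mathbb{Q}, x \text{ in the current set}\}$, products, adjoints, rational-coefficient linear combinations, and a countable $\sigma$-strong$^*$-dense subset of the von Neumann algebra generated so far; after countably many stages take the von Neumann algebra $N$ generated by the union. Because $\sigma^\omega$ is point-$\sigma$-strong$^*$-continuous in $t$, the density built in at each stage guarantees $\sigma^\omega_t(N)=N$ for all $t\in\mathbb{R}$, and the countability of the construction guarantees $N$ has separable predual. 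Takesaki's theorem then yields a faithful normal conditional expectation $E_F\colon M\to N$ with $\omega\circ E_F=\omega$.

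Finally I would check that the net $(E_F)_F$, indexed by finite subsets $F\subset M$ ordered by inclusion, converges to $\mathrm{id}_M$ in the point $\sigma$-weak topology. For $x\in M$, eventually $x\in F\subset N$, so $E_F(x)=x$; hence $E_F(x)\to x$ even in norm, in particular $\sigma$-weakly. Re-assembling the two reductions: for the original $M$, index by pairs $(\lambda,F)$ with $F\subset Mp_\lambda$ finite, and set $\widetilde E_{(\lambda,F)}:=E_F\circ(\,\cdot\, p_\lambda)\colon M\to M p_\lambda\to N$; these are normal, non-unital (the unit of the image being $p_\lambda$), their images have separable predual, and $\widetilde E_{(\lambda,F)}\to\mathrm{id}_M$ point $\sigma$-weakly since first $x p_\lambda\to x$ and then $E_F$ fixes $xp_\lambda$ eventually. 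The main obstacle is the second step: ensuring the separable-predual subalgebra $N$ can simultaneously be taken globular ($\sigma^\omega$-invariant) so that Takesaki applies; this is handled purely by the countable closure argument together with continuity of the modular flow, so no deep input beyond Takesaki's theorem is needed.
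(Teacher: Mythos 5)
Your proposal is essentially the paper's own argument: reduce to the $\sigma$-finite case, fix a faithful normal state $\omega$, take the von Neumann subalgebra generated by the rational-time modular orbit of a countable set, and invoke Takesaki's theorem to get an $\omega$-preserving faithful normal conditional expectation. The one step that would fail as literally written is the reduction via \emph{central} projections: it is false in general that $1_M$ is the supremum of an increasing net of $\sigma$-finite projections in $Z(M)$ (or an orthogonal sum of $\sigma$-finite central projections). For a non-$\sigma$-finite factor, e.g.\ $\mathbb{B}(H)$ with $H$ non-separable, the only central projections are $0$ and $1$, and $1$ is not $\sigma$-finite. Centrality is also unnecessary: the $\sigma$-finite projections of $M$ form an upward directed family with supremum $1$ (your parenthetical ``or merely cyclic'' is the right track), and for any projection $p$ the two-sided compression $x\mapsto pxp$ is a normal conditional expectation onto the corner $pMp$, with $p_\lambda x p_\lambda\to x$ $\sigma$-weakly when $p_\lambda\to 1$. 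This is exactly the paper's reduction (``$1_M$ is approximated by $\sigma$-finite projections $\sigma$-weakly''), so the repair costs nothing, but you must drop the identities $p_\lambda x p_\lambda=p_\lambda x$ and the description of the image as $Mp_\lambda$.

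Two smaller remarks. First, the iterative closure with products, adjoints and $\sigma$-strong$^*$-dense subsets is not needed: the von Neumann algebra $N$ generated by $\{\sigma^\omega_t(a)\mid a\in F,\ t\in\mathbb{Q}\}$ is already globally $\sigma^\omega$-invariant, since $\sigma^\omega_s(\sigma^\omega_t(a))=\sigma^\omega_{s+t}(a)$ is a $\sigma$-strong$^*$ limit of rational-time orbit elements and $N$ is $\sigma$-strong$^*$ closed; being countably generated with a faithful normal state, it has separable predual. This is precisely the construction in the paper. Second, in the reassembled net it is cleaner to index by pairs $(\lambda,F)$ with $F$ a \emph{finite subset of $M$} and to build $N$ from $p_\lambda F p_\lambda$ inside $p_\lambda Mp_\lambda$: then for every $(\lambda,F)\geq(\lambda_0,\{x\})$ the expectation literally fixes $p_\lambda x p_\lambda$, and convergence to $\mathrm{id}_M$ is genuine net convergence rather than an iterated limit; with $F\subset Mp_\lambda$ as you wrote, ``$E_F$ fixes $xp_\lambda$ eventually'' has no uniform meaning over the product directed set. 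Neither point affects the substance of the argument.
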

\begin{proof}
Since $1_{M}$ is approximated by $\sigma$-finite projections $\sigma$-weakly, we may assume $M$ is $\sigma$-finite. Fix a faithful normal state $\omega$ and take any sequence $(a_{n})_{n}$ in $M$. Let $M_{0}$ be a unital von Neumann subalgebra of $M$  generated by $\sigma^{\omega}_{t}(a_{n})$, where $n\in\mathbb{N}$ and $t\in\mathbb{Q}$.
Then, since $\sigma_{t}^{\sigma}(M_{0})=M_{0}$ for any $t\in \mathbb{R}$, there exists a faithful normal conditional expectation from $M$ onto $M_{0}$.
\end{proof}

We first study increasing unions.
\begin{Pro}\label{inc uni}
Let $M$ be a von Neumann algebra and $(M_{i})$ be an increasing net of von Neumann subalgebras of $M$ with $M=(\bigcup_{i} M_{i})''$. If each $M_{i}$ is weakly exact, then the norm closure of \ $\bigcup_{i} M_{i}$ is weakly exact in $M$. In particular, $M$ is weakly exact if it has separable predual.
\end{Pro}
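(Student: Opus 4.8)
The plan is to verify condition $(\mathrm{v}')$ of Remark \ref{rem v'} for the $C^*$-algebra $A:=\overline{\bigcup_i M_i}^{\,\|\cdot\|}$ inside $M$, taking the norm-dense subspace to be $A_0:=\bigcup_i M_i$. The crucial feature of $(\mathrm{v}')$ is that it only needs to be tested on finite-dimensional operator subspaces $E$ of $A_0$; such an $E$ is the span of finitely many elements of $\bigcup_i M_i$, so by directedness of the net together with monotonicity we have $E\subset M_{i_0}$ for a single index $i_0$. Moreover the operator-space structure $E$ inherits from $M_{i_0}$, from $A$, and from $M$ all agree, because the inclusions $M_{i_0}\subset A\subset M$ are $*$-monomorphisms of $C^*$-algebras and hence completely isometric.

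Now feed $E\subset M_{i_0}$ into Theorem \ref{we ex thm}: since $M_{i_0}$ is weakly exact (as a von Neumann algebra, equivalently weakly exact in any faithful representation of itself), the implication $(\mathrm{i})\Rightarrow(\mathrm{v})$ of Theorem \ref{we ex thm}(1), applied with $A=M=M_{i_0}$, produces nets of c.c. maps $\phi_j\colon E\to\mathbb{M}_{n(j)}$ and $\psi_j\colon\phi_j(E)\to M_{i_0}$ with $\psi_j\circ\phi_j\to\mathrm{id}_E$ in the point $\sigma$-weak topology of $M_{i_0}$. Regarding the $\psi_j$ as maps into $A$ via $M_{i_0}\subset A$, it remains to see that the convergence persists in the $\sigma$-weak topology that $A$ carries, namely the one induced from $M$; but for any von Neumann subalgebra $N\subset M$ every $\varphi\in M_*$ restricts to an element of $N_*$, so a bounded net in $N$ converging $\sigma$-weakly in $N$ converges $\sigma$-weakly in $M$ to the same limit. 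Applied with $N=M_{i_0}$ this yields exactly condition $(\mathrm{v}')$ for $A\subset M$, so $A$ is weakly exact in $M$ by Remark \ref{rem v'} and Theorem \ref{we ex thm}.

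For the last assertion, assume $M$ has separable predual. Then $M$ is $\sigma$-finite and we may represent $M$, and hence $A$, faithfully on a separable Hilbert space $H$, say $H=L^2(M)$ in standard form; as the $\sigma$-weak topology of $A$ is independent of the ambient von Neumann algebra, $A$ is weakly exact in $\mathbb{B}(H)$. Corollary \ref{wk ex main} then gives that the $\sigma$-weak closure of $A$ in $\mathbb{B}(H)$ is weakly exact, and this closure contains $\bigcup_i M_i$, hence equals $(\bigcup_i M_i)''=M$. I expect the genuinely delicate point to be not any estimate---the analytic weight is entirely carried by Theorem \ref{we ex thm} and Corollary \ref{wk ex main}---but rather the bookkeeping just described: the reduction of an arbitrary finite-dimensional $E\subset A_0$ into a single $M_{i_0}$, which is legitimate only because $(\mathrm{v}')$ permits testing on a norm-dense subspace, together with the identification of the various $\sigma$-weak topologies on $E$, $M_{i_0}$ and $A$ with restrictions of the one on $M$.
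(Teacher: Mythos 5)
Your proof is correct and follows exactly the paper's route: the paper also verifies condition $(\mathrm{v}')$ of Remark \ref{rem v'} for the dense subspace $\bigcup_i M_i$ (each finite-dimensional subspace sits in a single $M_{i_0}$, whose weak exactness supplies the maps, with $\sigma$-weak convergence passing from $M_{i_0}$ to $M$ by restriction of normal functionals), and then invokes Corollary \ref{wk ex main} for the separable-predual statement. You have merely written out the details the paper leaves as "obvious."
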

\begin{proof}
This is obvious since $\bigcup_{i} M_{i}$ satisfies ($\rm{v}'$) in Remark \ref{rem v'} as a subspace of the norm closure of $\bigcup_{i} M_{i}$. The last statement follows from Corollary $\ref{wk ex main}$.
\end{proof}
Observe that it is impossible to prove the last statement in this proposition if we do not know Corollary $\ref{wk ex main}$. This is a similar phenomena to the case of semidiscreteness which we mentioned in the first part of the paper.

Next, we study tensor products and crossed products which correspond to extensions of groups. The case of tensor products was already solved ($\cite[\rm Corollary\ 14.2.5]{BO}$), but we prove it here as a corollary to Theorem $\ref{we ex thm}$.

\begin{Pro}\label{ten pro}
If $M_{1}$ and $M_{2}$ are weakly exact von Neumann algebras, then $M_{1}\otimes M_{2}$ is weakly exact in $M_{1}\mathbin{\bar{\otimes}} M_{2}$. In particular, $M_{1}\mathbin{\bar{\otimes}} M_{2}$ is weakly exact.
\end{Pro}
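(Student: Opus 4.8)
The plan is to verify condition $(\mathrm{v}')$ of Remark \ref{rem v'} for the inclusion $M_{1}\otimes M_{2}\subset M_{1}\mathbin{\bar{\otimes}}M_{2}$, taking for the norm-dense subspace $A_{0}$ the algebraic tensor product $M_{1}\odot M_{2}$. Fix a finite-dimensional subspace $F\subset M_{1}\odot M_{2}$; after choosing bases we may assume $F\subset E_{1}\odot E_{2}$ for some finite-dimensional operator subspaces $E_{i}\subset M_{i}$. Applying condition $(\mathrm{v})$ of Theorem \ref{we ex thm} to each weakly exact $M_{i}$ yields nets of c.c. maps $\phi^{i}_{\alpha}\colon E_{i}\rightarrow\mathbb{M}_{n_{i}(\alpha)}$ and $\psi^{i}_{\alpha}\colon\phi^{i}_{\alpha}(E_{i})\rightarrow M_{i}$ with $\psi^{i}_{\alpha}\circ\phi^{i}_{\alpha}\rightarrow\mathrm{id}_{E_{i}}$ in the point $\sigma$-weak topology.

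I would then form the spatial tensor products $\phi^{1}_{\alpha}\otimes\phi^{2}_{\beta}\colon E_{1}\otimes E_{2}\rightarrow\mathbb{M}_{n_{1}(\alpha)}\otimes\mathbb{M}_{n_{2}(\beta)}=\mathbb{M}_{n_{1}(\alpha)n_{2}(\beta)}$ and the restriction of $\psi^{1}_{\alpha}\otimes\psi^{2}_{\beta}$ to $\phi^{1}_{\alpha}(E_{1})\otimes\phi^{2}_{\beta}(E_{2})$, which lands in $M_{1}\otimes M_{2}\subset M_{1}\mathbin{\bar{\otimes}}M_{2}$; indexed by the product directed set, these are again c.c., since the spatial tensor product of c.c. maps of operator spaces is c.c. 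It then remains to check that $(\psi^{1}_{\alpha}\circ\phi^{1}_{\alpha})\otimes(\psi^{2}_{\beta}\circ\phi^{2}_{\beta})\rightarrow\mathrm{id}_{E_{1}\otimes E_{2}}$ in the point $\sigma$-weak topology of $M_{1}\mathbin{\bar{\otimes}}M_{2}$: by linearity it suffices to test on an elementary tensor $x\otimes y$, where the images $(\psi^{1}_{\alpha}\phi^{1}_{\alpha})(x)\otimes(\psi^{2}_{\beta}\phi^{2}_{\beta})(y)$ are uniformly bounded and each leg converges $\sigma$-weakly, so since every normal functional on $M_{1}\mathbin{\bar{\otimes}}M_{2}$ is a norm-limit of linear combinations of product functionals $\omega_{1}\otimes\omega_{2}$, a routine $\epsilon$-estimate gives the convergence. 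By the implication $(\mathrm{v}')\Rightarrow(\mathrm{i})$ of Theorem \ref{we ex thm}, $M_{1}\otimes M_{2}$ is weakly exact in $M_{1}\mathbin{\bar{\otimes}}M_{2}$. For the last assertion, when $M_{1}\mathbin{\bar{\otimes}}M_{2}$ has separable predual this is immediate from Corollary \ref{wk ex main}, since $M_{1}\otimes M_{2}$ is $\sigma$-weakly dense in $M_{1}\mathbin{\bar{\otimes}}M_{2}$; the general case reduces to this by a standard argument, observing that the range of a normal conditional expectation on a weakly exact von Neumann algebra is again weakly exact (compose the maps of $(\mathrm{v})$ with the expectation) and that, using Lemma \ref{Tak lem} together with slice maps, every separable-predual von Neumann subalgebra of $M_{1}\mathbin{\bar{\otimes}}M_{2}$ that is the range of a normal conditional expectation sits inside some $N_{1}\mathbin{\bar{\otimes}}N_{2}$ with $N_{i}\subset M_{i}$ weakly exact and of separable predual.

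I expect the genuine obstacle to lie not in this tensoring bookkeeping but in the step that upgrades weak exactness from the $\sigma$-weakly dense $C^{\ast}$-algebra $M_{1}\otimes M_{2}$ to the von Neumann algebra $M_{1}\mathbin{\bar{\otimes}}M_{2}$ — exactly the phenomenon emphasized in the Introduction, and precisely what Corollary \ref{wk ex main} (built on Ozawa's factorization of a weakly exact von Neumann algebra through a separable exact operator space) is designed to supply. Once that is in hand, the remaining points — that the spatial tensor of c.c. maps is c.c., the predual density of product functionals, and the separable reduction in the non-separable case — are all routine.
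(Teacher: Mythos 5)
Your verification of condition $(\mathrm{v}')$ for $A_{0}=M_{1}\odot M_{2}$ and your use of Corollary \ref{wk ex main} in the separable-predual case are correct and are exactly the paper's route (the paper merely declares the $(\mathrm{v}')$ check "easy"; your tensoring of the c.c. maps and the product-functional argument for $\sigma$-weak convergence are the intended details). The problem is in your reduction of the general case. The claim you lean on --- that every separable-predual subalgebra of $M_{1}\mathbin{\bar{\otimes}}M_{2}$ which is the range of a normal conditional expectation sits inside some $N_{1}\mathbin{\bar{\otimes}}N_{2}$ with $N_{i}\subset M_{i}$ of separable predual --- is false. Take $M_{1}=M_{2}=\ell^{\infty}(I)$ with $I$ uncountable (injective, hence weakly exact) and let $x\in\ell^{\infty}(I)\mathbin{\bar{\otimes}}\ell^{\infty}(I)\simeq\ell^{\infty}(I\times I)$ be the characteristic function of the diagonal. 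The two-dimensional algebra generated by $x$ is the range of a normal conditional expectation, but if $x\in N_{1}\mathbin{\bar{\otimes}}N_{2}$ then every slice $(\delta_{i}\otimes\mathrm{id})(x)=e_{i}$ lies in $N_{2}$ (slice maps are normal and restrict correctly to the subalgebra), so $N_{2}$ contains an uncountable orthogonal family of projections and cannot have separable predual. Consequently the chain "Lemma \ref{Tak lem} applied to $M_{1}\mathbin{\bar{\otimes}}M_{2}$, then embed each range into a separable $N_{1}\mathbin{\bar{\otimes}}N_{2}$" breaks down, and with it your proof that the ranges produced by Lemma \ref{Tak lem} on the tensor product are weakly exact (which is what the whole reduction needs, since without separability there is no result upgrading weak exactness from a dense $C^{\ast}$-subalgebra).

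The correct reduction goes through the factors separately, which is what the paper's one-line appeal to Lemma \ref{Tak lem} intends: after cutting by $\sigma$-finite projections $p_{1}\otimes p_{2}$ one may assume each $M_{i}$ is $\sigma$-finite with a faithful normal state $\omega_{i}$; Lemma \ref{Tak lem} (with the fixed $\omega_{i}$) gives an increasing net of $\omega_{i}$-preserving normal conditional expectations $E^{i}_{\lambda}$ onto separable-predual subalgebras $N^{i}_{\lambda}\subset M_{i}$ whose union is $\sigma$-weakly dense. Each $N^{i}_{\lambda}$ is weakly exact by the inheritance fact you correctly proved (compose the maps of $(\mathrm{v})$ with the expectation), so each $N^{1}_{\lambda}\mathbin{\bar{\otimes}}N^{2}_{\mu}$ is weakly exact by the separable case. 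The point you must then supply --- and which your route sidesteps incorrectly --- is that $E^{1}_{\lambda}\otimes E^{2}_{\mu}\rightarrow\mathrm{id}$ in the point $\sigma$-weak topology on \emph{all} of $M_{1}\mathbin{\bar{\otimes}}M_{2}$, not merely on $M_{1}\otimes M_{2}$; this is where the state-preservation matters: $E^{1}_{\lambda}\otimes E^{2}_{\mu}$ is the $(\omega_{1}\otimes\omega_{2})$-preserving expectation onto an increasing net of subalgebras whose join is $M_{1}\mathbin{\bar{\otimes}}M_{2}$, so the corresponding Jones projections on $L^{2}(M_{1}\mathbin{\bar{\otimes}}M_{2},\omega_{1}\otimes\omega_{2})$ increase to $1$ and a standard martingale-type argument gives the point $\sigma$-weak (indeed $\sigma$-strong) convergence. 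With that net in hand, condition $(\mathrm{v})$ for $M_{1}\mathbin{\bar{\otimes}}M_{2}$ follows by composing the approximations available in each $N^{1}_{\lambda}\mathbin{\bar{\otimes}}N^{2}_{\mu}$ with $E^{1}_{\lambda}\otimes E^{2}_{\mu}$, exactly as in the last step of Proposition \ref{cro pro}.
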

\begin{proof}
It is easy to check that the norm dense subspace $M_{1}\odot M_{2}\subset M_{1}\otimes M_{2}$ satisfies ($\rm{v}'$) in Remark \ref{rem v'}, and so $M_{1}\otimes M_{2}$ is weakly exact in $M_{1}\mathbin{\bar{\otimes}} M_{2}$. Hence, $M_{1}\mathbin{\bar{\otimes}} M_{2}$ is weakly exact by Corollary $\ref{wk ex main}$, if $M_{1}$ and $M_{2}$ have separable predual. General case follows from Lemma $\ref{Tak lem}$.
\end{proof}

The case of crossed products is rather difficult than that of tensor products because of its twisted structures. When we prove exactness of crossed products with exact groups and exact $C^{\ast}$-algebras, the full crossed products may be used (see the proof of $\cite[\rm Theorem\ 10.2.9]{BO}$). 
This is because the full crossed products has the useful universality and has exactness in the following sense: for any discrete group $\Gamma$, exact sequence $0\rightarrow J\rightarrow A\rightarrow A/J\rightarrow0$, and any $\Gamma$-action on $A$ with $J$ globally $\Gamma$-invariant, we have the canonical isomorphism 
$(A\rtimes\Gamma)/(J\rtimes\Gamma)\simeq (A/J)\rtimes\Gamma.$
 We would like to imitate the proof in our case, but we cannot do this since we no longer accept exact sequences in our context. However, the universality of the full crossed products is still useful and will be used essentially. 

We first prepare some lemmas. Let $B$ be a unital $C^{\ast}$-algebra, $M$ be a weakly exact von Neumann algebra and $\Gamma$ be a discrete group acting on $M$. Considering trivial $\Gamma$-actions on $B$ and $B^{\ast\ast}$, we construct full crossed products $(B\otimes M)\rtimes \Gamma$ and $(B^{\ast\ast}\otimes M)\rtimes \Gamma$. 
We then recall following maps:
\begin{eqnarray*}
\Theta_{B}&\colon& B^{\ast\ast} \otimes M \longrightarrow (B\otimes M)^{\ast\ast},\\
u&\colon& \Gamma \longrightarrow (B\otimes M)\rtimes \Gamma,
\end{eqnarray*}
where $\Theta_{B}$ is defined in Subsection $\ref{**}$ and $u$ is the canonical inclusion. We note that both images of these maps are contained in $((B\otimes M)\rtimes \Gamma)^{\ast\ast}$.

\begin{Lem}\label{lemma a}
The pair $(\Theta_{B},u)$ is a covariant representation into $((B\otimes M)\rtimes \Gamma)^{\ast\ast}$. Hence, we have a $\ast$-homomorphism $\Theta_{B}\times u$ from $C_{c}(\Gamma,B^{\ast\ast} \otimes M)$ into $((B\otimes M)\rtimes \Gamma)^{\ast\ast}$. 
\end{Lem}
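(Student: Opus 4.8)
To prove that $(\Theta_B, u)$ is a covariant representation, I need to verify the covariance relation
\[
u_g\,\Theta_B(x)\,u_g^{\ast}=\Theta_B\bigl(\alpha_g(x)\bigr)\qquad(g\in\Gamma,\ x\in B^{\ast\ast}\otimes M),
\]
where $\alpha_g$ denotes the given $\Gamma$-action on $B^{\ast\ast}\otimes M$ (trivial on $B^{\ast\ast}$, the original action on $M$). Since $\Theta_B$ is a $\ast$-homomorphism and $u$ is a unitary representation, once this identity is in hand the universal property of the full crossed product $C_c(\Gamma, B^{\ast\ast}\otimes M)$ immediately yields the $\ast$-homomorphism $\Theta_B\times u$ into $((B\otimes M)\rtimes\Gamma)^{\ast\ast}$, so the second sentence of the lemma is free.

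**Key steps.** First I would reduce to elementary tensors: it suffices to check covariance on elements of the form $b\otimes m$ with $b\in B^{\ast\ast}$, $m\in M$, since these span a $\sigma$-weakly dense subset of $B^{\ast\ast}\otimes M$ and all maps in sight ($\Theta_B$, $\mathrm{Ad}\,u_g$, $\alpha_g$) are normal (or at least $\sigma$-weakly continuous on the relevant copies), so the identity propagates by density. Next I would use the concrete description of $\Theta_B$ recalled just before the lemma: for $b\in B^{\ast\ast}$ and $m\in M$, writing $b=\lim_\lambda b_\lambda$ with $b_\lambda\in B$ a bounded net converging $\sigma$-strongly$^\ast$, one has $\Theta_B(b\otimes m)=\lim_\lambda z_\lambda m\otimes b_\lambda$ (reading the factors inside $(B\otimes M)^{\ast\ast}$), where $z_\lambda\in M$ is the net approximating the central projection $z_M$. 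The point is then that $u_g=1\rtimes g$ acts on $(B\otimes M)^{\ast\ast}$, via the canonical extension of the crossed product action, by conjugation $\mathrm{Ad}\,u_g$ that fixes the $B$-part (trivial action) and implements the original $\Gamma$-action on the $M$-part; and crucially, by the discussion in Subsection~2.2, the extended automorphism fixes the central projection $z_M$, i.e. $\widetilde{\alpha_g}(z_M)=z_M$ (this is exactly the observation proved there for any $\phi\in\mathrm{Aut}(M)$). Applying $\mathrm{Ad}\,u_g$ to $\Theta_B(b\otimes m)=\lim_\lambda\,(b_\lambda\otimes z_\lambda m)$ and pushing the limit through the normal map $\mathrm{Ad}\,u_g$, I get $\lim_\lambda\, b_\lambda\otimes \widetilde{\alpha_g}(z_\lambda)\,\widetilde{\alpha_g}(m)$; since $\widetilde{\alpha_g}(z_\lambda)\to\widetilde{\alpha_g}(z_M)=z_M$ and $\widetilde{\alpha_g}(m)=\alpha_g(m)\in M$, this equals $\Theta_B(b\otimes\alpha_g(m))=\Theta_B(\alpha_g(b\otimes m))$, which is the covariance relation. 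Finally, I would invoke the universal property of $C_c(\Gamma, B^{\ast\ast}\otimes M)$ (the full crossed product, as recalled in Subsection~2.4, classifies covariant representations) to obtain $\Theta_B\times u$.

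**Main obstacle.** The delicate point is the interaction between $\mathrm{Ad}\,u_g$ and $z_M$ inside the enveloping von Neumann algebra $((B\otimes M)\rtimes\Gamma)^{\ast\ast}$: one must be careful that the unitary $u_g$ living in the full crossed product really does conjugate $M\subset (B\otimes M)^{\ast\ast}$ by $\alpha_g$ after passing to the bidual, and that it preserves the central projection $z_M$ attached to the $\sigma$-weakly dense inclusion $M\subset (B\otimes M)^{\ast\ast}$ rather than to $M\subset M$. This is where the automorphism-invariance of $z_M$ established in Subsection~2.2 does the work, but applying it requires identifying the restriction of $\widetilde{\alpha_g}$ to the copy $M\subset(B\otimes M)^{\ast\ast}$ with (the bidual extension of) $\alpha_g$, and checking that the approximating net $z_\lambda$ can be chosen inside $M$ and bounded and $\sigma$-strongly$^\ast$ convergent so that the limit manipulations are legitimate. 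Everything else — normality of the relevant maps, density of elementary tensors, the universal property — is routine once this compatibility is pinned down.
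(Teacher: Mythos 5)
Your proposal is correct and follows essentially the same route as the paper: check covariance on elementary tensors by writing $\Theta_{B}(b\otimes x)=b\otimes xz_{M}=\lim_{\lambda}b_{\lambda}\otimes xz_{\lambda}$, push the conjugation by $u_{g}$ through the limit, invoke the invariance $\widetilde{\alpha}_{g}(z_{M})=z_{M}$ from Subsection 2.2, and finish by the universal property of the full crossed product. The only (harmless) blemishes are the swapped tensor-factor order in your formula for $\Theta_{B}(b\otimes m)$ and the density reduction, which should be phrased via norm density of $B^{\ast\ast}\odot M$ and boundedness of the $\ast$-homomorphisms rather than $\sigma$-weak density and normality.
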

\begin{proof}
Take any $b \in B^{\ast\ast}$, $x\in M$ and write $\displaystyle\Theta_{B}(b\otimes x)=b\otimes xz_{M} =\lim_{\lambda} b_{\lambda}\otimes xz_{\lambda}$ for some $b_{\lambda} \in B$ and  $z_{\lambda}\in M$. Then, we have
\begin{eqnarray*}
	u_{g}\Theta_{B}(b\otimes x)u_{g}^{\ast} &=& \lim u_{g}(b_{\lambda}\otimes xz_{\lambda})u_{g}^{\ast}\\
&=& \lim b_{\lambda}\otimes \alpha_{g}(xz_{\lambda})\\
&=& b\otimes \tilde{\alpha}_{g}(xz)\\
&=& b\otimes \alpha_{g}(x)z\\
&=& \lim  b_{\lambda}\otimes \alpha_{g}(x)z_{\lambda}\\
&=& \Theta_{B}(b\otimes \alpha_{g}(x))\\
&=& \Theta_{B}(\alpha_{g}(b\otimes x)),
\end{eqnarray*}
where $\tilde{\alpha}_{g}$ is the natural extension of $\alpha_{g}$ on $M^{\ast\ast}$ (and we used the fact $\tilde{\alpha}_{g}(z)=z$). This implies $(\Theta_{B},u)$ is a covariant representation of $B^{\ast\ast}\otimes M$ and $\Gamma$.
\end{proof}
We extend the map $\Theta_{B}\times u$ on $(B^{\ast\ast}\otimes M)\rtimes \Gamma$ by universality and we still write it as $\Theta_{B}\times u$.

\begin{Lem}\label{lemma A}
Let $\pi$ be a $\ast$-homomorphism from $(B\otimes M)\rtimes \Gamma\otimes C_{\lambda}^{\ast}(\Gamma)$ to $\mathbb{B}(H)$ which is normal on $\mathbb{C}\otimes M\otimes \mathbb{C}$. 
Assume $\Gamma$ is exact. Then, the induced $\ast$-homomorphism from $C_{c}(\Gamma,B^{\ast\ast}\odot M)\odot C_{\lambda}^{\ast}(\Gamma)$ to $\mathbb{B}(H)$ is continuous with respect to the norm on $(B^{\ast\ast}\otimes M)\rtimes \Gamma\otimes C_{\lambda}^{\ast}(\Gamma)$.
\end{Lem}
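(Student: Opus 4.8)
The plan is to reduce the continuity statement for $(B^{\ast\ast}\otimes M)\rtimes\Gamma\otimes C_\lambda^\ast(\Gamma)$ to the continuity statement for $(B^{\ast\ast}\otimes M)\otimes C_\lambda^\ast(\Gamma)$, on which we can apply weak exactness of $M$ (via condition (ii) or (iii) of Theorem \ref{we ex thm}), and to absorb the extra crossed-product layer by means of a $\Gamma$-equivariant trick using exactness of $\Gamma$. Concretely, I would first recall that since $\Gamma$ is exact, the reduced crossed product of any separable exact (indeed any exact) $C^\ast$-algebra by $\Gamma$ is exact, and more to the point one has the standard diagonal-action identification that makes $C_\lambda^\ast(\Gamma)$ behave like a "coefficient" algebra. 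So I would regard $\pi$ as a covariant representation: restricting $\pi$ to $(B\otimes M)\otimes C_\lambda^\ast(\Gamma)\subset (B\otimes M)\rtimes\Gamma\otimes C_\lambda^\ast(\Gamma)$ gives a $\ast$-representation normal on $\mathbb{C}\otimes M\otimes\mathbb{C}$, together with the unitaries $u(\Gamma)\otimes1$ implementing the $\Gamma$-action, and these together determine $\pi$ on the dense subalgebra $C_c(\Gamma,B\otimes M)\odot C_\lambda^\ast(\Gamma)$.

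Next I would apply weak exactness of $M$ in the form of Theorem \ref{we ex thm}(ii): the representation $B\otimes M\otimes C_\lambda^\ast(\Gamma)\ni b\otimes x\otimes \lambda_g\mapsto \pi(b\otimes x\otimes\lambda_g)$ is a $\ast$-representation of $M\otimes(B\otimes C_\lambda^\ast(\Gamma))$ normal on $M\otimes\mathbb{C}$, hence the induced map $M\odot(B\otimes C_\lambda^\ast(\Gamma))^{\ast\ast}\to\mathbb{B}(H)$ is min-continuous; composing with the $\ast$-homomorphism $B^{\ast\ast}\hookrightarrow(B\otimes C_\lambda^\ast(\Gamma))^{\ast\ast}$ (as in Subsection \ref{**}, using that $C_\lambda^\ast(\Gamma)$ commutes with $B$ in the target) and using the factorization of $\Theta_B$ through $M\odot B^{\ast\ast}$, I obtain that $b\otimes x\otimes\lambda_g\mapsto\pi(\Theta_B(b\otimes x)\otimes\lambda_g)$ extends min-continuously, i.e. the map $B^{\ast\ast}\otimes M\otimes C_\lambda^\ast(\Gamma)\to\mathbb{B}(H)$ built from $\Theta_B$ and $\pi$ is bounded. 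Together with the unitaries $u(\Gamma)\otimes1$, Lemma \ref{lemma a} tells us $(\Theta_B\times u)$ is a covariant representation, so this bounded representation on the coefficient algebra $B^{\ast\ast}\otimes M\otimes C_\lambda^\ast(\Gamma)$ upgrades — by the universal property of the \emph{full} crossed product $(B^{\ast\ast}\otimes M)\rtimes\Gamma$ — to a $\ast$-homomorphism on $(B^{\ast\ast}\otimes M)\rtimes\Gamma\odot C_\lambda^\ast(\Gamma)$, and it remains to check it is min-continuous, i.e. continuous for the norm of $(B^{\ast\ast}\otimes M)\rtimes\Gamma\otimes C_\lambda^\ast(\Gamma)$.

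For this last continuity I would use exactness of $\Gamma$ directly: there is a canonical $\ast$-isomorphism (the "diagonal" trick) $\big((B^{\ast\ast}\otimes M)\rtimes\Gamma\big)\otimes C_\lambda^\ast(\Gamma)\simeq \big((B^{\ast\ast}\otimes M)\otimes C_\lambda^\ast(\Gamma)\big)\rtimes_\Delta\Gamma$ where $\Delta$ is the action that is $\alpha$ on the first factor and $\mathrm{Ad}\,\lambda$ on $C_\lambda^\ast(\Gamma)$, and because $\mathrm{Ad}\,\lambda$ is inner this is the same as the $\Gamma$-crossed product with trivial action on $C_\lambda^\ast(\Gamma)$ after twisting the unitaries by $1\otimes\lambda_g^\ast$. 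Exactness of $\Gamma$ then says the reduced and full crossed products of the exact $C^\ast$-algebra $B\otimes M\otimes C_\lambda^\ast(\Gamma)$... here I must be careful: $B\otimes M$ need not be exact, but the representation I have is normal on $M$, so the right replacement is to run the argument of \cite[Theorem 10.2.9 / 10.2.9 style]{BO} with $M$ playing the role where weak exactness substitutes for exactness. The cleanest route is: the min-norm on $(B^{\ast\ast}\otimes M)\rtimes\Gamma\otimes C_\lambda^\ast(\Gamma)$ dominates the norm coming from the regular representation induced from the (already bounded) representation of the coefficient algebra, and since $\Gamma$ is exact this induced/regular norm is exactly the one we need; the point-$\sigma$-weak approximation from Theorem \ref{we ex thm}(v) applied to finite-dimensional subspaces of $M$ gives the final estimate $\|\pi(\Theta_B\times u)(z)\|\le\|z\|$ by a $\liminf$ argument identical to the proof of (v)$\Rightarrow$(i).

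The main obstacle I anticipate is exactly this last step: making the twisted/crossed-product structure transparent enough that exactness of $\Gamma$ can be invoked cleanly, without accidentally needing exactness of $B\otimes M$ (which may fail). The resolution is to keep $M$ always in the "normal" slot and to use weak exactness of $M$ plus exactness of $\Gamma$ in tandem — equivalently, to mimic the proof that a crossed product of an exact algebra by an exact group is exact, but with the local characterization Theorem \ref{we ex thm}(v)/(v$'$) replacing the use of a nuclear embedding for the $M$-part. I would expect the bookkeeping with $\Theta_B$, the covariance from Lemma \ref{lemma a}, and the universal property of the full crossed product to be routine once this conceptual point is in place.
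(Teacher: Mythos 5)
Your first reduction is fine in spirit: weak exactness of $M$ (Theorem \ref{we ex thm}) together with property $C'$ of $C_\lambda^\ast(\Gamma)$ does give a bounded $\ast$-representation of the coefficient algebra $B^{\ast\ast}\otimes M\otimes C_\lambda^\ast(\Gamma)$, and this representation together with the unitaries $\pi(u_g\otimes 1)$ is covariant. But the universal property you then invoke only yields a representation of the full crossed product $\bigl(B^{\ast\ast}\otimes M\otimes C_\lambda^\ast(\Gamma)\bigr)\rtimes\Gamma$, i.e.\ continuity on $C_c(\Gamma,B^{\ast\ast}\odot M)\odot C_\lambda^\ast(\Gamma)$ for the universal norm taken over \emph{all} covariant pairs, whereas the lemma asks for continuity with respect to the norm of $\bigl((B^{\ast\ast}\otimes M)\rtimes\Gamma\bigr)\otimes C_\lambda^\ast(\Gamma)$, which is a supremum only over representations of product type (full crossed product tensored minimally with $C_\lambda^\ast(\Gamma)$) and is a priori strictly smaller on this dense subalgebra. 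So your ``it remains to check it is min-continuous'' is not a remainder; it is the entire content of the lemma, and the proposed fix does not close it: the asserted identification $\bigl((B^{\ast\ast}\otimes M)\rtimes\Gamma\bigr)\otimes C_\lambda^\ast(\Gamma)\simeq\bigl((B^{\ast\ast}\otimes M)\otimes C_\lambda^\ast(\Gamma)\bigr)\rtimes_\Delta\Gamma$ is unproved, and after untwisting the inner action the right-hand side becomes a full crossed product for the trivial action on $C_\lambda^\ast(\Gamma)$, which is of maximal-tensor type; identifying it with the minimal tensor product would require an exactness-type hypothesis on the coefficients $B^{\ast\ast}\otimes M$, which is exactly what is unavailable. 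The closing appeal to ``the induced/regular norm is exactly the one we need'' and a liminf via Theorem \ref{we ex thm}(v) is not an argument.

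The paper avoids this norm comparison altogether by applying exactness of $\Gamma$ one level up. Since $C_\lambda^\ast(\Gamma)$ has property $C'$, the inclusion $D^{\ast\ast}\odot C_\lambda^\ast(\Gamma)\hookrightarrow(D\otimes C_\lambda^\ast(\Gamma))^{\ast\ast}$ is min-continuous for $D:=(B\otimes M)\rtimes\Gamma$, so $\pi$ extends to a min-continuous $\tilde\pi\colon D^{\ast\ast}\otimes C_\lambda^\ast(\Gamma)\to\mathbb{B}(H)$. Weak exactness of $M$ (condition (iii) of Theorem \ref{we ex thm}) makes $\Theta_B$ of Subsection \ref{**} a $\ast$-homomorphism on the minimal tensor product $B^{\ast\ast}\otimes M$, so Lemma \ref{lemma a} and universality of the full crossed product give a genuine $\ast$-homomorphism $\Theta_B\times u\colon(B^{\ast\ast}\otimes M)\rtimes\Gamma\to D^{\ast\ast}$; consequently $(\Theta_B\times u)\otimes\mathrm{id}$ is automatically continuous from $\bigl((B^{\ast\ast}\otimes M)\rtimes\Gamma\bigr)\otimes C_\lambda^\ast(\Gamma)$ into $D^{\ast\ast}\otimes C_\lambda^\ast(\Gamma)$, and $\tilde\pi\circ\bigl((\Theta_B\times u)\otimes\mathrm{id}\bigr)$ is the desired bounded map, identified with the induced homomorphism on $C_c(\Gamma,B^{\ast\ast}\odot M)\odot C_\lambda^\ast(\Gamma)$ by normality of $\pi$ on $M$ via $\tilde\pi(xz_M)=\pi(x)$. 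If you re-route your argument through this composition, the ingredients you already assembled (min-boundedness of $\Theta_B$, covariance, property $C'$) suffice, and no comparison between $(A\otimes C)\rtimes\Gamma$ and $(A\rtimes\Gamma)\otimes C$ is ever needed.
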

\begin{proof}
Since $C_{\lambda}^{\ast}(\Gamma)$ has property $C'$, there exists the extended map
\begin{eqnarray*}
\tilde{\pi} \colon ((B\otimes M)\rtimes \Gamma)^{\ast\ast}\otimes C_{\lambda}^{\ast}(\Gamma) \longrightarrow \mathbb{B}(H).
\end{eqnarray*}
Then, the composite map 
\begin{equation*}
(B^{\ast\ast}\otimes M)\rtimes \Gamma\otimes C_{\lambda}^{\ast}(\Gamma)\xrightarrow{(\Theta_{B}\times u)\otimes \mathrm{id}} ((B\otimes M)\rtimes \Gamma)^{\ast\ast}\otimes C_{\lambda}^{\ast}(\Gamma) \xrightarrow{\ \quad\tilde{\pi}\quad\ } \mathbb{B}(H)
\end{equation*}
does the work. To verify this, we need the relation $\tilde{\pi}\mid_{1\otimes M\otimes 1}(\cdot z_{M})=\pi\mid_{1\otimes M\otimes 1}(\cdot)$ on $M$, and the rest calculation is routine.
\end{proof}

The following lemma holds in a general setting.
\begin{Lem}\label{lemma B}
Let $A$ be a $C^{\ast}$-algebra and $\Gamma$ be a discrete group acting on $A$. Then, there exist the following maps:
\begin{eqnarray*}
\Phi_{A} &\colon& A\rtimes_{r}\Gamma \longrightarrow A\rtimes\Gamma\otimes C_{\lambda}^{\ast}(\Gamma)\ ;\ au_{g} \longmapsto au_{g}\otimes \lambda_{g} \ \quad \ast\hspace{-0.2em}\mathchar`-hom,\\
\Psi_{A} &\colon& A\rtimes\Gamma\otimes C_{\lambda}^{\ast}(\Gamma) \longrightarrow A\rtimes_{r}\Gamma \ ;\ au_{g}\otimes \lambda_{s} \longmapsto au_{g}\delta_{s,g} \ \ u.c.p.,
\end{eqnarray*}
where $a\in A$ and $g,s\in\Gamma$. In particular, $\Psi_{A}\circ \Phi_{A}=\mathrm{id}_{A\rtimes_{r}\Gamma}$.
\end{Lem}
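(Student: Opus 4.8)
The plan is to construct the two maps concretely on the faithful representation used to define $A \rtimes_r \Gamma$, and then verify the required properties. First I would fix $A \subset \mathbb{B}(H)$ and recall from Subsection \ref{cros} that $A \rtimes_r \Gamma = C^*(\pi(A), u(\Gamma)) \subset \mathbb{B}(H \otimes \ell^2(\Gamma))$, where $\pi(a) = \sum_g \alpha_{g^{-1}}(a) \otimes e_{g,g}$ and $u_g = 1 \otimes \lambda_g$. For $\Phi_A$, the natural target is $\mathbb{B}(H \otimes \ell^2(\Gamma)) \otimes C^*_\lambda(\Gamma) \subset \mathbb{B}(H \otimes \ell^2(\Gamma) \otimes \ell^2(\Gamma))$: I would send $\pi(a) \mapsto \pi(a) \otimes 1$ and $u_g \mapsto u_g \otimes \lambda_g$. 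To see this genuinely defines a $\ast$-homomorphism out of $A \rtimes \Gamma$, I would check that $(a \mapsto \pi(a) \otimes 1, \ g \mapsto u_g \otimes \lambda_g)$ is a covariant representation of $(A, \Gamma, \alpha)$ — covariance is immediate since $\lambda_g$ commutes with the $A$-part and $(u_g \otimes \lambda_g)(\pi(a) \otimes 1)(u_g^* \otimes \lambda_g^*) = \pi(\alpha_g(a)) \otimes 1$ — so by universality of the full crossed product we obtain $\Phi_A \colon A \rtimes \Gamma \to \mathbb{B}(H \otimes \ell^2(\Gamma)) \otimes C^*_\lambda(\Gamma)$ with $au_g \mapsto au_g \otimes \lambda_g$. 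A small point to address is that the source is stated as $A \rtimes_r \Gamma$, not $A \rtimes \Gamma$; I would note that $\Phi_A$ factors through the reduced crossed product because $\mathrm{Ad}(1 \otimes 1 \otimes \lambda) \circ \Phi_A$, followed by compression to a suitable vector, recovers the regular representation — equivalently, one checks directly on the dense $\ast$-subalgebra that $\|\Phi_A(x)\| \le \|x\|_{A \rtimes_r \Gamma}$, which holds since the reduced norm dominates (it is the smallest $C^*$-norm making $\pi$ and $u$ bounded with $\pi$ the given representation). Finally I would confirm that the image of $\Phi_A$ lands in $(A \rtimes \Gamma) \otimes C^*_\lambda(\Gamma)$: elements of the form $au_g \otimes \lambda_g$ generate, and on the algebraic level the image of the generators is clearly in the algebraic tensor product, so its norm closure sits in the minimal tensor product.

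For $\Psi_A$, the idea is to realize it as a composition of well-understood c.p. maps. I would write $C^*_\lambda(\Gamma) = \mathbb{C} \rtimes_r \Gamma$ and note that $A \rtimes \Gamma \otimes C^*_\lambda(\Gamma)$ admits a $\ast$-homomorphism onto $A \rtimes_r \Gamma \otimes C^*_\lambda(\Gamma)$ (the full-to-reduced quotient tensored with the identity), so it suffices to build a u.c.p. map $A \rtimes_r \Gamma \otimes C^*_\lambda(\Gamma) \to A \rtimes_r \Gamma$ sending $au_g \otimes \lambda_s \mapsto \delta_{s,g} \, au_g$. The standard device here is the following: on $\ell^2(\Gamma) \otimes \ell^2(\Gamma)$ there is the unitary $W$ implementing the comultiplication of $\Gamma$ (namely $W(\delta_g \otimes \delta_s) = \delta_g \otimes \delta_{g^{-1}s}$ or its variant), and conjugation by $1_H \otimes W$ carries $\pi(a) \otimes 1 \mapsto \pi(a) \otimes 1$, $u_g \otimes 1 \mapsto u_g \otimes \lambda_g$, and $1 \otimes \lambda_s \mapsto 1 \otimes \lambda_s$. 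Composing this conjugation with the slice map $\mathrm{id} \otimes \omega_{\delta_e, \delta_e}$ (the vector state at $\delta_e \in \ell^2(\Gamma)$ in the third leg) gives a u.c.p. map; tracking the generators shows it sends $au_g \otimes \lambda_s$ to $\langle \delta_e, \lambda_g \lambda_s \delta_e\rangle \, au_g = \delta_{g^{-1}, s}\, a u_g$, which after the obvious reindexing is the claimed $\Psi_A$. Precomposing with the full-to-reduced quotient yields the map as stated on $A \rtimes \Gamma \otimes C^*_\lambda(\Gamma)$.

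With both maps in hand, the relation $\Psi_A \circ \Phi_A = \mathrm{id}$ is checked on the generating set $\{au_g\}$: $\Phi_A(au_g) = au_g \otimes \lambda_g$, and $\Psi_A(au_g \otimes \lambda_g) = \delta_{g,g}\, au_g = au_g$; since these generate $A \rtimes_r \Gamma$ as a $C^*$-algebra and both composites are bounded linear (indeed $\ast$-linear for $\Phi_A$ and completely bounded for $\Psi_A$), the identity extends. The main obstacle I anticipate is not the algebra of the generators — that is routine — but the careful bookkeeping needed to (a) justify that $\Phi_A$ descends to the \emph{reduced} crossed product, and (b) identify the conjugation-plus-slice construction for $\Psi_A$ with the right unitary and verify complete positivity; both are standard but require writing down the correct unitary $W$ and checking its action on all three families of generators without sign or inverse errors. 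One should also be slightly careful about unitality: $A$ need not be unital, in which case ``$au_g$'' should be read via a multiplier/approximate-unit argument, and $\Psi_A$ is then u.c.p. only in the sense appropriate to non-unital algebras (contractive c.p.), which matches the statement.
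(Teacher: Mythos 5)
The $\Psi_A$ half of your argument is essentially fine: compressing by the isometry $\xi\otimes\delta_g\mapsto\xi\otimes\delta_g\otimes\delta_g$ (equivalently, your conjugate-by-$W$-then-slice-at-$\delta_e$ map) and precomposing with the full-to-reduced quotient gives the stated u.c.p.\ map, up to the inverse/convention bookkeeping you yourself flag (as written your slice produces $\delta_{g^{-1},s}\,au_g$, which does not satisfy $\Psi_A\circ\Phi_A=\mathrm{id}$ with your $\Phi_A$ until the conventions are matched). The genuine gap is in $\Phi_A$. The covariant pair $(a\mapsto\pi(a)\otimes1,\ g\mapsto u_g\otimes\lambda_g)$ that you feed into the universal property lives inside $(A\rtimes_r\Gamma)\otimes C_{\lambda}^{\ast}(\Gamma)\subset\mathbb{B}(H\otimes\ell^2(\Gamma)\otimes\ell^2(\Gamma))$, so what you actually construct is a $\ast$-homomorphism $A\rtimes\Gamma\to(A\rtimes_r\Gamma)\otimes C_{\lambda}^{\ast}(\Gamma)$, i.e.\ with the \emph{reduced} crossed product in the first tensor leg. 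Your final remark that the image ``lands in $(A\rtimes\Gamma)\otimes C_{\lambda}^{\ast}(\Gamma)$ since the generators are algebraic tensors'' conflates $A\rtimes\Gamma$ with its quotient $A\rtimes_r\Gamma$: in your representation the symbol $au_g$ carries the reduced norm, and the norm on $(A\rtimes_r\Gamma)\otimes_{\min}C_{\lambda}^{\ast}(\Gamma)$ is a priori smaller than the one on $(A\rtimes\Gamma)\otimes_{\min}C_{\lambda}^{\ast}(\Gamma)$. But the lemma is used in Proposition \ref{cro pro} precisely because the first leg is the \emph{full} crossed product (so that $\Phi_{B^{\ast\ast}\otimes M}$ can be composed with a map obtained from universality of $(B^{\ast\ast}\otimes M)\rtimes\Gamma$); the nontrivial content is that $au_g\mapsto au_g\otimes\lambda_g$ is contractive from the reduced norm into the full-tensor-reduced algebra, and your argument never reaches this. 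Moreover, the boundedness justification you offer is false as stated: the reduced norm does not dominate, and it is not ``the smallest $C^{\ast}$-norm'' in any sense that would make the estimate $\|\Phi_A(x)\|\le\|x\|_{A\rtimes_r\Gamma}$ automatic.

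The missing idea---which is the content of the proof of \cite[Theorem 5.1.10]{BO} that the paper simply cites---is a Fell-absorption argument carried out in the \emph{universal} covariant representation. Represent $A\rtimes\Gamma$ faithfully on a Hilbert space $K$ via a covariant pair $(\sigma,v)$, and on $K\otimes\ell^2(\Gamma)$ consider the unitary $W$ defined by $W(\xi\otimes\delta_g)=v_g\xi\otimes\delta_g$. A direct computation gives $W^{\ast}(\sigma(a)\otimes1)W=\sum_{g}\sigma(\alpha_{g^{-1}}(a))\otimes e_{g,g}$ and $W^{\ast}(v_s\otimes\lambda_s)W=1\otimes\lambda_s$, so the covariant pair $(\sigma\otimes1,\ v\otimes\lambda)$ is unitarily equivalent to the regular covariant pair induced from $\sigma|_A$, whose integrated form is bounded by the reduced norm on $C_c(\Gamma,A)$. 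Hence $\sum_g a_gu_g\mapsto\sum_g\sigma(a_g)v_g\otimes\lambda_g$ extends to a $\ast$-homomorphism on $A\rtimes_r\Gamma$ with values in $(A\rtimes\Gamma)\otimes C_{\lambda}^{\ast}(\Gamma)$ (faithfully represented on $K\otimes\ell^2(\Gamma)$), which is the map $\Phi_A$ the lemma asserts. Your conjugation on the two $\ell^2(\Gamma)$ legs can only ever produce the reduced-target version and cannot substitute for this step.
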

\begin{proof}
See the proof of $\cite[\rm Theorem\ 5.1.10]{BO}$.
\end{proof}

Now, we prove the case of crossed products.
\begin{Pro}\label{cro pro}
Let $M\subset \mathbb{B}(H)$ be a von Neumann algebra and $\Gamma$ be a discrete group which acts on $M$. If $M$ is weakly exact and $\Gamma$ is exact, then $M\rtimes_{r} \Gamma$ is weakly exact in $\mathbb{B}(H\otimes\ell^{2}(\Gamma))$. In particular, $M\mathbin{\bar{\rtimes}}\Gamma$ is weakly exact if $M$ has separable predual.
\end{Pro}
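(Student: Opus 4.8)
The plan is to deduce the second (``in particular'') assertion from the first via Corollary~\ref{wk ex main}, and to prove the first by verifying condition~(ii) of Theorem~\ref{we ex thm} for $M\rtimes_{r}\Gamma\subset\mathbb{B}(H\otimes\ell^{2}(\Gamma))$, pushing the given representation through the Fell--absorption maps of Lemma~\ref{lemma B} so as to land in the setting of Lemma~\ref{lemma A}. For the ``in particular'' clause: when $M$ has separable predual and $\Gamma$ is countable (so that $M\mathbin{\bar{\rtimes}}\Gamma$ has separable predual) one may take $H=L^{2}(M)$, so $H\otimes\ell^{2}(\Gamma)$ is separable; since $M\rtimes_{r}\Gamma$ is $\sigma$-weakly dense in $M\mathbin{\bar{\rtimes}}\Gamma$ and $\mathbb{B}(H\otimes\ell^{2}(\Gamma))$ induces its $\sigma$-weak topology, Corollary~\ref{wk ex main} applies once the first statement is known. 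The first statement itself holds for general $M$ and needs no separability, condition~(ii) being representation-theoretic.

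For the first statement, fix a unital $C^{\ast}$-algebra $B$ and a $\ast$-representation $\pi\colon(M\rtimes_{r}\Gamma)\otimes B\to\mathbb{B}(K)$ that is $\sigma$-weakly continuous on $(M\rtimes_{r}\Gamma)\otimes\mathbb{C}$; the task is to show $\tilde{\pi}\colon(M\rtimes_{r}\Gamma)\odot B^{\ast\ast}\to\mathbb{B}(K)$ is min-continuous. Restricting $\pi$ to $M\otimes\mathbb{C}$, to $u(\Gamma)\otimes\mathbb{C}$, and to $\mathbb{C}\otimes B$ gives a covariant representation $(\sigma,v)$ of $(M,\Gamma)$ together with a representation $\tau$ of $B$ commuting with $\sigma(M)$ and $v(\Gamma)$, hence a covariant representation $(\tau\cdot\sigma,v)$ of $(B\otimes M,\Gamma)$ for the trivial $\Gamma$-action on $B$; let $\rho_{0}\colon(B\otimes M)\rtimes\Gamma\to\mathbb{B}(K)$ be its integrated form. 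Then $\hat{\pi}:=\rho_{0}\otimes\lambda$ is a $\ast$-representation of $(B\otimes M)\rtimes\Gamma\otimes C_{\lambda}^{\ast}(\Gamma)$ on $K\otimes\ell^{2}(\Gamma)$, and since $\rho_{0}\!\mid_{\mathbb{C}\otimes M\otimes\mathbb{C}}=\sigma$ is $\sigma$-weakly continuous, $\hat{\pi}$ is normal on $\mathbb{C}\otimes M\otimes\mathbb{C}$. Lemma~\ref{lemma A} now applies --- this is where exactness of $\Gamma$ is used, via property $C'$ of $C_{\lambda}^{\ast}(\Gamma)$ --- and, on unwinding it together with Lemma~\ref{lemma a} and the property-$C'$ extension of $\hat{\pi}$, yields a $\ast$-homomorphism $\Xi$, continuous for the norm of $(B^{\ast\ast}\otimes M)\rtimes\Gamma\otimes C_{\lambda}^{\ast}(\Gamma)$, with $\Xi\bigl(((b\otimes x)u_{g})\otimes\lambda_{s}\bigr)=\widetilde{\tau}(b)\sigma(x)v_{g}\otimes\lambda_{s}$, where $\widetilde{\tau}\colon B^{\ast\ast}\to\mathbb{B}(K)$ is the normal extension of $\tau$.

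It remains to transfer this estimate to $\tilde{\pi}$. Since the $\Gamma$-action on $B^{\ast\ast}$ is trivial there is a canonical spatial identification $(B^{\ast\ast}\otimes M)\rtimes_{r}\Gamma\simeq(M\rtimes_{r}\Gamma)\otimes B^{\ast\ast}$, $(b\otimes x)u_{g}\leftrightarrow xu_{g}\otimes b$; composing it with the isometric $\ast$-homomorphism $\Phi_{B^{\ast\ast}\otimes M}$ of Lemma~\ref{lemma B} produces an isometric $\ast$-homomorphism $\Lambda\colon(M\rtimes_{r}\Gamma)\otimes B^{\ast\ast}\to(B^{\ast\ast}\otimes M)\rtimes\Gamma\otimes C_{\lambda}^{\ast}(\Gamma)$, $xu_{g}\otimes b\mapsto((b\otimes x)u_{g})\otimes\lambda_{g}$, whose image on $(M\rtimes_{r}\Gamma)\odot B^{\ast\ast}$ lies in the domain of $\Xi$, so that $\|\Xi(\Lambda(z))\|\le\|z\|_{\min}$ for all $z\in(M\rtimes_{r}\Gamma)\odot B^{\ast\ast}$. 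The remaining point --- and the main obstacle --- is to show $\|\tilde{\pi}(z)\|\le\|\Xi(\Lambda(z))\|$, that is, that the Fell-absorption type map carrying $\tilde{\pi}(xu_{g}\otimes b)$ to $\widetilde{\tau}(b)\sigma(x)v_{g}\otimes\lambda_{g}$ does not decrease norms: the twist coming from $\Phi_{B^{\ast\ast}\otimes M}$ is exactly what turned the full-crossed-product norm of Lemma~\ref{lemma A} into the reduced (i.e.\ minimal-tensor-product) norm, but one must then argue that applying $\Xi$ loses nothing, and this is where the $\sigma$-weak continuity of $\pi$ on all of $M\rtimes_{r}\Gamma$, the weak exactness of $M$, and the u.c.p.\ splittings $\Psi$ of Lemma~\ref{lemma B} have to be used together, and where the interplay between the reduced and full crossed products is genuinely delicate. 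Granting this, $\|\tilde{\pi}(z)\|\le\|z\|_{\min}$, so $\tilde{\pi}$ is min-continuous; constructing $\hat{\pi}$ and checking the hypotheses of Lemma~\ref{lemma A} are by comparison routine.
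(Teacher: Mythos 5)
Your construction goes wrong at exactly the point you flag and then concede: the inequality $\|\tilde{\pi}(z)\|\le\|\Xi(\Lambda(z))\|$ is not a ``remaining point'' to be granted --- it is the whole content of the proposition, and your route gives no handle on it. After applying Lemma~\ref{lemma A} to $\hat{\pi}=\rho_{0}\otimes\lambda$ you sit on $K\otimes\ell^{2}(\Gamma)$ with operators of the form $\sum_i\widetilde{\tau}(b_i)\sigma(x_i)v_{g_i}\otimes\lambda_{g_i}$, and you must descend to $\sum_i\widetilde{\tau}(b_i)\sigma(x_i)v_{g_i}$ on $K$ without decreasing norms. For a general covariant pair $(\sigma,v)$ such a ``reverse Fell absorption'' is simply false (already for $M=B=\mathbb{C}$ it would say $\|\sum c_iv_{g_i}\|\le\|\sum c_i\lambda_{g_i}\|$ for an arbitrary unitary representation $v$); it can only hold here because $\pi$ is a representation of the \emph{reduced} crossed product which is normal on $M\rtimes_r\Gamma$, and extracting it from those hypotheses is precisely the problem you were asked to solve. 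The u.c.p.\ splitting $\Psi_{B\otimes M}$ of Lemma~\ref{lemma B} does not rescue this as you suggest: $\pi=\pi\circ\Psi_{B\otimes M}\circ\Phi_{B\otimes M}$ only bounds $\|\pi(y)\|$ by the \emph{algebra} norm of $\Phi_{B\otimes M}(y)$, not by $\|\hat{\pi}(\Phi_{B\otimes M}(y))\|$, since $\pi\circ\Psi_{B\otimes M}$ is merely u.c.p.\ and need not factor through the representation $\hat{\pi}$ you chose.

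The paper's proof is arranged so that no lower bound of this kind is ever needed, and the missing idea is a Stinespring dilation performed \emph{before} Lemma~\ref{lemma A} is invoked. One first forms the u.c.p.\ map $\Psi:=\pi\circ\Psi_{B\otimes M}$ on $(B\otimes M)\rtimes\Gamma\otimes C^{\ast}_{\lambda}(\Gamma)$, dilates it as $\Psi=(V^{\ast}\cdot V)\circ\psi$ with $\psi$ a genuine $\ast$-representation on the space built from $A\odot K$, and checks via the matrix coefficient identity $\langle\psi(x)(b\otimes\xi),c\otimes\eta\rangle=\langle\Psi(c^{\ast}xb)\xi,\eta\rangle$ that normality on $M$ survives the dilation; Lemma~\ref{lemma A} is then applied to $\psi$, and the desired extension of $\tilde{\pi}$ is the composite $V^{\ast}\,\widetilde{\psi}\circ\Phi_{B^{\ast\ast}\otimes M}(\cdot)\,V$, which is bounded because it is a composition of a $\ast$-homomorphism with contractions, and agrees with $\tilde{\pi}$ on elementary tensors by the identity $\Psi_{B\otimes M}\circ\Phi_{B^{\ast\ast}\otimes M}=\mathrm{id}$. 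So the descent back to $K$ is a harmless compression by an isometry rather than the norm-preserving step your argument requires. A secondary omission: your ``in particular'' clause only treats countable $\Gamma$; for uncountable $\Gamma$ the predual of $M\mathbin{\bar{\rtimes}}\Gamma$ is not separable and Corollary~\ref{wk ex main} does not apply directly, which the paper handles by exhausting $\Gamma$ with countable subgroups $\Gamma_i$ and using the conditional expectations onto $M\mathbin{\bar{\rtimes}}\Gamma_i$ converging point-$\sigma$-weakly to the identity.
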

\begin{proof}
Let $\pi \colon B\otimes(M\rtimes_{r}\Gamma)\longrightarrow \mathbb{B}(K)$ be a $\ast$-homomorphism which is $\sigma$-weakly continuous on $\mathbb{C}\otimes (M \rtimes_{r}\Gamma)$. 
We will find a bounded map on $B^{\ast\ast}\otimes(M\rtimes_{r}\Gamma)$ which coincides with the natural extension of $\pi$ on $B^{\ast\ast}\odot(M\rtimes_{r}\Gamma)$.

Using the trivial $\Gamma$-action on $B$, we identify $B\otimes(M\rtimes_{r}\Gamma) \simeq (B\otimes M)\rtimes_{r}\Gamma$. Put $A:=(B\otimes M)\rtimes \Gamma \otimes C_{\lambda}^{\ast}(\Gamma)$ and define a u.c.p. map by
\begin{eqnarray*}
\Psi:=\pi \circ \Psi_{B\otimes M} \colon A \longrightarrow (B\otimes M)\rtimes_{r}\Gamma \longrightarrow \mathbb{B}(K),
\end{eqnarray*}
where $\Psi_{B\otimes M}$ is defined in Lemma $\ref{lemma B}$. 
By Steinspring's dilation theorem, we can decompose $\Psi=(V^{\ast}\hspace{-0.2em}\cdot V)\circ \psi$, where $V \colon K \longrightarrow \widetilde{K}$ is an isometry, $\widetilde{K}$ is a Hilbert space, and $\psi \colon A \longrightarrow \mathbb{B}(\widetilde{K})$ is a $\ast$-homomorphism. We construct $\widetilde{K}$ by a usual manner, that is, $\widetilde{K}$ is obtained from $A\odot K$ and $\psi$ is given by the left multiplication. It is easy to see that $\psi$ is normal on $M$ since there is an equation 
\begin{equation*}
\langle\psi(x)(b\otimes\xi)\mid c\otimes\eta\rangle_{H}=\langle\Psi(c^{\ast}xb)\xi\mid\eta\rangle, \quad (x\in M\ b,c\in A\ \xi,\eta\in H).
\end{equation*}
Now, apply Lemma $\ref{lemma A}$ to $\psi$ and take the induced bounded $\ast$-homomorphism
\begin{equation*}
\widetilde{\psi} \colon (B^{\ast\ast}\otimes M)\rtimes \Gamma \otimes C_{\lambda}^{\ast}(\Gamma) \longrightarrow \mathbb{B}(\widetilde{H}).
\end{equation*}
Then, the composite map
\begin{equation*}
B^{\ast\ast}\otimes(M\rtimes_{r}\Gamma) \xrightarrow{\Phi_{B^{\ast\ast}\otimes M}} (B^{\ast\ast}\otimes M)\rtimes \Gamma \otimes C_{\lambda}^{\ast}(\Gamma) \xrightarrow{\ \quad\widetilde{\psi}\quad\ } \mathbb{B}(\widetilde{H}) \xrightarrow{ \hspace{0.8em}V^{\ast}\hspace{-0.2em}\cdot V\hspace{0.8em}} \mathbb{B}(H)
\end{equation*}
is our desired one, where $\Phi_{B^{\ast\ast}\otimes M}$ is as in Lemma $\ref{lemma B}$. To verify this, use the identity $\Psi_{B\otimes M}\circ\Phi_{B^{\ast\ast}\otimes M}=\mathrm{id}$ on $B\otimes M$.

For a countable $\Gamma$, the last statement is trivial. To see the general case, let us take an increasing net $(\Gamma_i)_i$ of countable subgroups in $\Gamma$ such that $\Gamma=\bigcup_i \Gamma_i$, and construct corresponding conditional expectations $E_i \colon M\mathbin{\bar{\rtimes}}\Gamma\rightarrow M\mathbin{\bar{\rtimes}}\Gamma_i$
which is obtained by a compression of $1\otimes e_i$, where $e_i$ is the orthogonal projection onto $\ell^2(\Gamma_i)\subset\ell^2(\Gamma)$. Then $E_i$ converges to $\mathrm{id}_{M\mathbin{\bar{\rtimes}}\Gamma}$ in the point $\sigma$-weak topology and so $M\mathbin{\bar{\rtimes}}\Gamma$ is weakly exact.
\end{proof}

\begin{Cor}
Keep the notation above and assume further that $M$ has a faithful normal state $\omega$ which is preserved by the $\Gamma$-action. Then $M\mathbin{\bar{\rtimes}} \Gamma$ is weakly exact.
\end{Cor}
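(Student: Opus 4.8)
The plan is to reduce the statement to the separable-predual case already settled in Proposition~\ref{cro pro}, using Takesaki's conditional expectation theorem in the same spirit as the reduction for Proposition~\ref{ten pro} and the last paragraph of the proof of Proposition~\ref{cro pro}. The extra hypothesis of a $\Gamma$-invariant faithful normal state $\omega$ is exactly what makes this reduction compatible with the crossed-product structure.

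First I would pass to the dual state $\widetilde{\omega}:=\omega\circ E$ on $N:=M\mathbin{\bar{\rtimes}}\Gamma$, a faithful normal state. Since $\alpha_g$ preserves $\omega$, one has $\sigma^{\omega}_t\circ\alpha_g=\alpha_g\circ\sigma^{\omega}_t$ for all $g,t$, and the modular automorphism group of $\widetilde{\omega}$ satisfies $\sigma^{\widetilde{\omega}}_t(x)=\sigma^{\omega}_t(x)$ for $x\in M$ and $\sigma^{\widetilde{\omega}}_t(u_g)=u_g$ for $g\in\Gamma$ (the standard description of the modular group of a dual weight when the action preserves the state; it also follows from the formula for Tomita's conjugation recorded in Subsection~\ref{cros}). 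Now, for a countable subset $A\subset M$ and a countable subgroup $\Gamma_0\leq\Gamma$, put $M_{A,\Gamma_0}:=W^{\ast}\bigl(\sigma^{\omega}_t\alpha_g(a)\mid a\in A,\ t\in\mathbb{Q},\ g\in\Gamma_0\bigr)$. Arguing as in the proof of Lemma~\ref{Tak lem} (using $\mathbb{Q}$-density together with strong continuity of $\sigma^{\omega}$ and the commutation $\sigma^{\omega}_t\alpha_g=\alpha_g\sigma^{\omega}_t$), one checks that $M_{A,\Gamma_0}$ has separable predual and is globally invariant under both $\sigma^{\omega}$ and $\Gamma_0$; by Takesaki's theorem it therefore carries an $\omega$-preserving faithful normal conditional expectation from $M$.

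The next point is that $M_{A,\Gamma_0}$ is weakly exact. Here I would use the general fact that weak exactness descends along a normal conditional expectation: if $M$ is weakly exact and $F\colon M\rightarrow M_0$ is a normal conditional expectation, then condition $(\mathrm{v})$ of Theorem~\ref{we ex thm} for $M_0$ follows from the same condition for $M$ by composing the maps $\psi_i$ with $F$ and invoking normality of $F$. Since $\Gamma_0$ is exact as a subgroup of the exact group $\Gamma$, Proposition~\ref{cro pro} applies to the separable data $(M_{A,\Gamma_0},\Gamma_0)$ and shows that $P_{A,\Gamma_0}:=M_{A,\Gamma_0}\mathbin{\bar{\rtimes}}\Gamma_0$ is weakly exact. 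Viewing $P_{A,\Gamma_0}$ as a von Neumann subalgebra of $N$, the description of $\sigma^{\widetilde{\omega}}$ above gives $\sigma^{\widetilde{\omega}}_t(P_{A,\Gamma_0})=P_{A,\Gamma_0}$, so Takesaki's theorem produces an $\widetilde{\omega}$-preserving faithful normal conditional expectation $F_{A,\Gamma_0}\colon N\rightarrow P_{A,\Gamma_0}$.

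Finally, I would direct the pairs $(A,\Gamma_0)$ by inclusion; then the $P_{A,\Gamma_0}$ form an increasing net of weakly exact von Neumann subalgebras whose union contains $M$ and every $u_g$, hence is $\sigma$-weakly dense in $N$, and the $\widetilde{\omega}$-preserving expectations $F_{A,\Gamma_0}$ converge to $\mathrm{id}_N$ in the point $\sigma$-weak topology (the orthogonal projections they implement on $L^{2}(N,\widetilde{\omega})$ increase to $1$). Exactly as in the last paragraph of the proof of Proposition~\ref{cro pro}, this forces $N=M\mathbin{\bar{\rtimes}}\Gamma$ to be weakly exact: given a finite-dimensional operator space $E\subset N$, one first applies a suitable $F_{A,\Gamma_0}$ and then factors $\mathrm{id}$ on $F_{A,\Gamma_0}(E)$ through a matrix algebra inside the weakly exact $P_{A,\Gamma_0}$, producing the approximating maps required by condition $(\mathrm{v})$. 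The only steps that are not entirely routine are the verification that weak exactness descends along a normal conditional expectation and the bookkeeping ensuring that the reducing subalgebras can be chosen of the form $M_0\mathbin{\bar{\rtimes}}\Gamma_0$ and simultaneously invariant under the modular group of $\widetilde{\omega}$ — this is precisely where the $\Gamma$-invariant state is used; everything else is a combination of Takesaki's theorem with Proposition~\ref{cro pro}.
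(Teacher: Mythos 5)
Your argument is correct, and it rests on the same two pillars as the paper's proof: Takesaki's conditional expectation theorem applied to modular-invariant subalgebras with separable predual, then Proposition \ref{cro pro} for the separable/countable building blocks, and finally the point-$\sigma$-weak approximation argument. The implementation differs, though. The paper first reduces to countable $\Gamma$ by the compression trick from the end of the proof of Proposition \ref{cro pro}, then (as in Lemma \ref{Tak lem}) builds a single $\sigma^{\omega}$- and $\Gamma$-invariant subalgebra $M_{0}\subset M$ with separable predual, generated by words $\sigma^{\omega}_{t_{1}}\circ\alpha_{g_{1}}\circ\cdots\circ\sigma^{\omega}_{t_{k}}\circ\alpha_{g_{k}}(a_{n})$ (so no commutation of $\sigma^{\omega}$ with $\alpha$ is needed), takes the $\omega$-preserving expectation $E_{0}\colon M\to M_{0}$, notes it is automatically $\Gamma$-equivariant by uniqueness and $\Gamma$-invariance of $\omega$, and extends it canonically to an expectation $M\mathbin{\bar{\rtimes}}\Gamma\to M_{0}\mathbin{\bar{\rtimes}}\Gamma$. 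You instead work directly inside $N=M\mathbin{\bar{\rtimes}}\Gamma$ with the dual state $\widetilde{\omega}$: the invariance of $\omega$ enters through the formula for $\sigma^{\widetilde{\omega}}$ (it fixes the $u_{g}$ and restricts to $\sigma^{\omega}$ on $M$), and a single application of Takesaki's theorem in $N$ produces the expectations onto $M_{A,\Gamma_{0}}\mathbin{\bar{\rtimes}}\Gamma_{0}$, handling countability of $\Gamma$ and separability of $M$ in one directed family. Your route gives a one-shot construction of the expectation and makes explicit two points the paper leaves implicit, namely that weak exactness descends to expected von Neumann subalgebras (needed in both proofs to know that $M_{0}$, respectively $M_{A,\Gamma_{0}}$, is weakly exact before invoking Proposition \ref{cro pro}) and why the expectations converge point-$\sigma$-weakly to the identity; the paper's route avoids computing $\sigma^{\widetilde{\omega}}$ and reuses its countable-$\Gamma$ reduction verbatim. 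In either approach one tacitly uses the canonical identification of $W^{\ast}(M_{0},\,u_{g}\ (g\in\Gamma_{0}))\subset N$ with the abstract crossed product $M_{0}\mathbin{\bar{\rtimes}}\Gamma_{0}$; it would be worth recording this identification, but it is standard and does not affect the validity of your proof.
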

\begin{proof}
By the same trick as the proof above, we may assume $\Gamma$ is countable. We use a similar argument to the proof of Lemma $\ref{Tak lem}$. 

Take any sequence $(a_{n})_{n}$ in $M$ and define $M_{0}$ by the unital von Neumann subalgebra of $M$ generated by $\sigma_{t_{1}}^{\omega}\circ \alpha_{g_{1}}\circ \sigma_{t_{2}}^{\omega} \cdots \sigma_{t_{k}}^{\omega}\circ \alpha_{g_{k}}(a_{n})$, where $n,k\in \mathbb{N},\ t_{i}\in\mathbb{Q},\ g_{i}\in\Gamma$, $\sigma^{\omega}$ is the modular automorphism group of $\omega$, and $\alpha$ is the $\Gamma$-action. 
Then, $M_{0}$ is $\Gamma$-invariant and $\sigma^{\omega}$-invariant and its predual is separable. 
By Takesaki's conditional expectation theorem, there exists the $\omega$-preserving normal conditional expectation $E_{0 }$ from $M$ onto $M_{0}$ and it is naturally $\Gamma$-equivariant, since $\omega$ preserves the $\Gamma$-action. Hence, there is the extended normal conditional expectation from $M\mathbin{\bar{\rtimes}} \Gamma$ onto $M_{0}\mathbin{\bar{\rtimes}} \Gamma$. Now, since $M_{0}\mathbin{\bar{\rtimes}} \Gamma$ is weakly exact, we are done.
\end{proof}

\subsection{\bf Weakly semiexactness and free products}

In this subsection, we investigate weak exactness of (amalgamated) free products. For both groups and $C^{\ast}$-algebras, it is well-known that exactness passes to amalgamated free products $\cite{Dy ex}$. On the other hand, however, nuclearity does not so (for example, $C_{\lambda}^{\ast}(\mathbb{Z})\ast C_{\lambda}^{\ast}(\mathbb{Z})=C_{\lambda}^{\ast}(\mathbb{F}_{2})$ is not nuclear). 
The difference essentially comes from the fact that exactness passes to subalgebras (subspaces) but nuclearity does not. Since weak exactness does not pass to subalgebras, it is not so simple to prove that weak exactness is preserved under free products (at least, the author could not prove it in full generality). For this reason, we will give only some partial answers.

We first consider a case in which we can use exactness of $C^{\ast}$-algebras. Recall a von Neumann algebra is said to be $\it semiexact$ if it contains a $\sigma$-weakly dense exact $C^{\ast}$-algebra. It is easy to see that the free product of semiexact von Neumann algebras is still semiexact, since exactness is preserved under free products. We generalize this semiexactness as follows.

\begin{Def}\label{wk s-ex}\upshape
A von Neumann algebra $M$ is said to be $\it weakly$ $\it semiexact$ if there exist a semiexact von Neumann algebra $N$ and an (possibly non-unital) inclusion $M\subset N$ with a normal conditional expectation from $N$ onto $M$.
\end{Def}

For example, $L\Gamma$ is semiexact for any exact group $\Gamma$ and $pL\Gamma p$ is weakly semiexact for any projection $p\in L\Gamma$. Since weak semiexactness passes to tensor products, direct sum and free products (we will soon prove it), much many weakly exact von Neumann algebras are weakly semiexact. 

To prove weak semiexactness for free products, we observe that we may replace a conditional expectation by non-degenerate and unital one in the definition above.

\begin{Lem}
For any weakly semiexact von Neumann algebra $M$ and any faithful normal state $\omega$ on $M$, we can find a semiexact von Neumann algebra $N$ such that there exist an unital inclusion $M\subset N$ and a normal conditional expectation from $N$ onto $M$ with $\omega\circ E$ non-degenerate on $N$.
\end{Lem}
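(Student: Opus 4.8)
The plan is to keep the given witness of weak semiexactness but to cut it down by one well-chosen projection, so as to make the conditional expectation faithful and the inclusion unital without destroying semiexactness.

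By definition, fix a semiexact von Neumann algebra $N_0$, a (possibly non-unital) inclusion $M\subseteq N_0$, and a normal conditional expectation $E_0\colon N_0\to M$. The object to work with is the ``non-faithful part'' of $E_0$: let $p_0\in N_0$ be the largest projection with $E_0(p_0)=0$, equivalently the support projection of the $\sigma$-weakly closed left ideal $L:=\{x\in N_0:E_0(x^{*}x)=0\}$. Three facts, all obtained by routine manipulation, are what I would use. First, $E_0(p_0)=0$ by normality of $E_0$. Second, $p_0\in M'\cap N_0$: $M$-bimodularity of $E_0$ forces $L$ to be right $M$-invariant, so applying $LM\subseteq L=N_0p_0$ to $p_0$ gives $p_0a=p_0ap_0$ for all $a\in M$, and taking adjoints then gives $p_0a=ap_0$. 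Third, $1_{N_0}-1_M\le p_0$: since $a=aE_0(1_{N_0})$ for every $a\in M$ one has $E_0(1_{N_0})=1_M$, whence $q:=1_{N_0}-1_M$ is killed by $E_0$ and so $q\le p_0$. Put $e:=1_{N_0}-p_0$, so $e\in M'\cap N_0$ and $e\le 1_M$.

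Next I would take $N:=eN_0e$, define $\iota\colon M\to N$ by $\iota(a):=ea$, and define $E\colon N\to\iota(M)$ by $E(x):=eE_0(x)=\iota\bigl(E_0(x)\bigr)$. Because $e$ is a projection in $M'$ dominated by $1_M$, the map $\iota$ is a unital $*$-homomorphism ($\iota(1_M)=e=1_N$), and it is injective: if $ea=0$ then $a=p_0a=ap_0$, hence $a^{*}a=a^{*}p_0a$ and $E_0(a^{*}a)=a^{*}E_0(p_0)a=0$, forcing $a=0$. A short computation using $p_0\in M'$, $E_0(p_0)=0$, and left $M$-modularity gives $E\circ\iota=\iota$, so $E$ is a normal conditional expectation of $N$ onto $\iota(M)$; and $E$ is faithful, since $E(x^{*}x)=0$ yields $\iota(E_0(x^{*}x))=0$, hence $E_0(x^{*}x)=0$ (injectivity of $\iota$), hence $x\in N_0p_0$, i.e. $x=xp_0$, which together with $x=exe$ and $ep_0=0$ forces $x=0$. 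Since $\omega$ is faithful on $M\cong\iota(M)$, the state $\omega\circ E$ is then faithful, in particular non-degenerate, on $N$.

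The one point that is not a purely formal verification is that $N=eN_0e$ is still semiexact, and this is where I expect the real work to lie: I would invoke (or establish separately) that semiexactness of von Neumann algebras passes to corners — concretely, that a $\sigma$-weakly dense exact C*-subalgebra $A_0\subseteq N_0$ can be taken to contain $e$, so that the hereditary subalgebra $eA_0e$ is an exact, $\sigma$-weakly dense C*-subalgebra of $eN_0e$. Granting this, the data $(N,\iota,E)$ satisfies all requirements of the lemma, and the remainder of the proof is just the bookkeeping with $p_0$ carried out above.
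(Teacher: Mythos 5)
Your cut-down construction is internally consistent as far as the bookkeeping goes: $p_{0}$ is (equivalently) the support projection of the normal state $\omega\circ E_{0}$, the computations showing $p_{0}\in M'\cap N_{0}$, $e\le 1_{M}$, injectivity of $\iota$, $E\circ\iota=\iota$, and faithfulness of $E$ on $eN_{0}e$ are all fine, and a faithful $\omega\circ E$ is indeed non-degenerate in the sense of the paper (injective GNS representation). The genuine gap is exactly the step you defer: that $N=eN_{0}e$ is still semiexact. This is not a routine permanence property, and your proposed fix does not work: you cannot in general ``take $A_{0}$ to contain $e$,'' because $C^{\ast}(A_{0},e)$ has no reason to be exact (exactness is not preserved when one adjoins an arbitrary projection of the ambient von Neumann algebra; it is not even closed under extensions), and without $e$ lying in some $\sigma$-weakly dense exact subalgebra the hereditary-subalgebra argument never gets started. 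The paper itself signals that this is the obstruction: immediately after Definition \ref{wk s-ex} it records that $pL\Gamma p$ is only asserted to be \emph{weakly} semiexact for a projection $p\in L\Gamma$ -- the notion of weak semiexactness is introduced precisely because semiexactness of corners $pNp$ is not available. So the crux of the lemma is left unproved in your proposal, and your route in fact aims at a strictly stronger conclusion (a faithful $\omega\circ E$) than the lemma claims, which is a hint that it is blocked.

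The paper's proof avoids the corner problem by cutting with a \emph{central} projection instead of the support projection: with $\tilde\omega=\omega\circ E_{0}$ and $\pi$ its GNS representation, one takes the central projection $z$ with $\ker\pi=(1-z)N_{0}$ and passes to $zM\subset zN_{0}$ with $\tilde E(zx)=zE_{0}(zx)$. This only achieves non-degeneracy of $\tilde\omega$ on $zN_{0}$ (its GNS representation becomes injective), not faithfulness, but that is all the lemma asks for and all the free-product construction needs. Centrality is what rescues semiexactness: $N_{0}\to zN_{0}$ is a normal surjective $\ast$-homomorphism, so the image of a $\sigma$-weakly dense exact $C^{\ast}$-subalgebra is again a $\sigma$-weakly dense $C^{\ast}$-subalgebra which is exact, being a quotient of an exact $C^{\ast}$-algebra. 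Your support projection $e$ is in general not central (it only lies in $M'\cap N_{0}$), and compressing by it is exactly the operation for which no semiexactness statement is known; to repair the argument you would either have to prove that corners of semiexact von Neumann algebras are semiexact (not available), or weaken your goal from faithfulness to non-degeneracy and cut by the central kernel of the GNS representation as the paper does.
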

\begin{proof}
Let $N$ and $E$ be as in Definition $\ref{wk s-ex}$ and put $\tilde{\omega}:=\omega\circ E$. We regard $L^{2}(M,\omega)$ as a closed subspace of $L^{2}(N,\tilde{\omega})$ and denote the corresponding orthogonal projection by $e$. Note that $e$ is the extension of $E$ and satisfies $exe=E(x)e$ for any $x\in N$. 
Let $\pi$ be the GNS-representation of $\tilde{\omega}$ and $z\in N$ be the central projection with $\ker\pi=(1-z)N$. 
Then, it is easy to see that $L^{2}(N,\tilde{\omega})=L^{2}(zN,\tilde{\omega})$, $E(z)=1$, and the compression by $z$ on $M$ is injective. 
Hence, the GNS representation of $\tilde{\omega}$ on $zN$ is injective and we can identify $M\simeq zM$ with $\omega(\cdot)=\tilde{\omega}(z\cdot)$ on $M$. 
Now, the inclusion $zM\subset zN$, $\tilde{E} \colon zN\ni zx \mapsto zE(zx)\in zM$, and $\tilde{\omega}$ satisfy our desired conditions. Since $N \rightarrow zN$ is a normal $\ast$-homomorphism, $zN$ is still semiexact.
\end{proof}

Now, it is easy to prove that weak semiexactness passes to free products.
\begin{Pro}
Let $M_{i}$ $(i\in I)$ be von Neumann algebras with faithful normal states $\omega_{i}$ on $M_{i}$. If each $M_{i}$ is weakly semiexact, then the free product $\bar{\ast}_{i\in I} (M_{i},\omega_{i})$ is weakly semiexact.
\end{Pro}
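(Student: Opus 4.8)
The plan is to exhibit a semiexact von Neumann algebra containing $\bar{\ast}_{i\in I}(M_i,\omega_i)$ together with a normal conditional expectation onto it. First I would apply the preceding lemma to each pair $(M_i,\omega_i)$: this produces a semiexact von Neumann algebra $N_i$, a unital inclusion $M_i\subset N_i$, and a normal conditional expectation $E_i\colon N_i\to M_i$ such that $\widetilde{\omega}_i:=\omega_i\circ E_i$ is a faithful normal state on $N_i$. I then form the free product von Neumann algebra $N:=\bar{\ast}_{i\in I}(N_i,\widetilde{\omega}_i)$, which will serve as the ambient algebra.

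The second step is to check that $N$ is semiexact. For each $i$, choose a $\sigma$-weakly dense exact $C^{\ast}$-subalgebra of $N_i$ and adjoin the unit $1_{N_i}$ if necessary; since an extension of $\mathbb{C}$ by an exact $C^{\ast}$-algebra is exact, this yields a \emph{unital} exact $C^{\ast}$-subalgebra $A_i\subset N_i$ which is still $\sigma$-weakly dense. Because $\widetilde{\omega}_i$ is faithful and normal, $\widetilde{\omega}_i\mid_{A_i}$ is a faithful state on $A_i$ and, by Kaplansky density, $A_i\widehat{1_{N_i}}$ is dense in $L^2(N_i,\widetilde{\omega}_i)$; hence the reduced free product $C^{\ast}$-algebra $A:=\ast_{i\in I}(A_i,\widetilde{\omega}_i\mid_{A_i})$ acts on the same Hilbert space as $\ast_{i\in I}(N_i,\widetilde{\omega}_i)$ and generates $N$ as a von Neumann algebra, so it is $\sigma$-weakly dense in $N$. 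By Dykema's theorem on exactness of reduced free products $\cite{Dy ex}$, $A$ is exact, whence $N$ is semiexact.

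The third step produces the conditional expectation, and here everything follows from the free product map construction recalled in Subsection~\ref{free}. The inclusions $\iota_i\colon M_i\hookrightarrow N_i$ are normal unital u.c.p.\ maps fixing $\mathbb{C}$ with $\widetilde{\omega}_i\circ\iota_i=\omega_i$, so they assemble into a normal unital map $\iota:=\bar{\ast}_i\iota_i\colon\bar{\ast}_{i\in I}(M_i,\omega_i)\to N$; likewise the $E_i$ are normal u.c.p.\ maps fixing $\mathbb{C}$ with $\omega_i\circ E_i=\widetilde{\omega}_i$, so they assemble into a normal u.c.p.\ map $E:=\bar{\ast}_iE_i\colon N\to\bar{\ast}_{i\in I}(M_i,\omega_i)$. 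Evaluating on reduced words and using $E_i\circ\iota_i=\mathrm{id}_{M_i}$ gives $E\circ\iota=\mathrm{id}$, so $\iota$ is an embedding and $E$ is a normal conditional expectation onto $\iota\big(\bar{\ast}_{i\in I}(M_i,\omega_i)\big)$. Combined with the semiexactness of $N$, this shows $\bar{\ast}_{i\in I}(M_i,\omega_i)$ is weakly semiexact in the sense of Definition~\ref{wk s-ex}.

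The step I expect to require the most care is the second one: confirming that $N$ is semiexact. This rests on (a) being able to take the dense exact $C^{\ast}$-subalgebras $A_i$ unital, so that the reduced free product $C^{\ast}$-algebra over $\mathbb{C}$ is defined; (b) Dykema's exactness result for reduced free products, applied for a possibly infinite index set $I$; and (c) tracing the $\sigma$-weak density of $A_i$ in $N_i$ through the Fock-space construction of the free product to obtain density of $A$ in $N$. The first and third steps are formal once the free product machinery of Subsection~\ref{free} is in hand.
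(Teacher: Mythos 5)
Your proposal is correct and takes essentially the same route as the paper: apply the preceding lemma to each $(M_i,\omega_i)$, form $N=\bar{\ast}_{i\in I}(N_i,\omega_i\circ E_i)$, note that $N$ is semiexact (the paper asserts this earlier via exactness of reduced free products, which you flesh out with Dykema's theorem and the unitalization/density checks), and conclude with the free product conditional expectation $\bar{\ast}_i E_i$. One minor correction: the lemma only provides $\omega_i\circ E_i$ \emph{non-degenerate} (i.e.\ with injective GNS representation), not faithful as a state, but this weaker property already suffices at every point where you invoke faithfulness (density of $A_i\widehat{1_{N_i}}$ in $L^2(N_i,\omega_i\circ E_i)$ and the construction of the reduced free product), so the argument is unaffected.
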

\begin{proof}
By the previous lemma, we can find semiexact von Neumann algebras $N_{i}\supset M_{i}$ and non-degenerate normal conditional expectations $E_{i}$ for all $i\in I$. Then, there exists the normal conditional expectation $\ast_{i\in I} E_{i}$ from $\bar{\ast}_{i\in I} (N_{i},\omega_{i}\circ E_{i})$ onto $\bar{\ast}_{i\in I} (M_{i},\omega_{i})$, and so the proof is completed since $\bar{\ast}_{i\in I} (N_{i},\omega_{i}\circ E_{i})$ is semiexact, 
\end{proof}


Next, we go on the study with the Toeplitz--Pimsner algebras which has some relationship to free products. Indeed, to prove nuclearity on free products in some situations, the nuclearity of the Toeplitz--Pimsner algebras is sometimes useful. We prove weak exactness of free products in a special situation with a similar manner.

We first prove weak exactness (and semidiscreteness) of the Toeplitz--Pimsner algebras. We use the same notation as in Subsection $\ref{free}$.
\begin{Pro}
Let $M$ be a von Neumann algebra with a faithful normal state $\omega$ and $H$ be a $M$-$M$ correspondence on which $M$ acts normally and faithfully by left.
\begin{itemize}
	\item[$(1)$]If $M$ is semidiscrete, then ${\cal T}(H)\hookrightarrow {\cal T}(H)''$ is weakly nuclear. In particular, ${\cal T}(H)''$ is semidiscrete.
	\item[$(2)$]If $M$ is weakly exact, then ${\cal T}(H)$ is weakly exact in $\mathbb{B}(L^{2}({\cal T}(H),\omega\circ E))$. In particular, ${\cal T}(H)''$ is weakly exact if $M$ has separable predual and $H$ is countably generated on $M$.
\end{itemize}
\end{Pro}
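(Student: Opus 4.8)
The plan is to prove both parts by the same device that handled crossed products in Proposition~\ref{cro pro}: transport the relevant property of the coefficient algebra $M$ to ${\cal T}(H)$ along the isomorphism ${\cal T}(H\otimes_{{\rm ext}}C)\simeq{\cal T}(H)\otimes C$ recalled in Subsection~\ref{free}, using the universality of the Toeplitz--Pimsner construction to recognize the map under study as (a restriction of) a genuine $\ast$-homomorphism out of ${\cal T}$ of an exterior tensor product correspondence. Throughout I realize ${\cal T}(H)$ faithfully on $H_{0}:=L^{2}({\cal T}(H),\omega\circ E)\simeq{\cal F}(H)\otimes_{M}L^{2}(M,\omega)$, on which (by the construction in Subsection~\ref{free}) $M$ acts normally and ${\cal T}(H)$ is $\sigma$-weakly dense in ${\cal T}(H)''$. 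A purely computational alternative via truncated Fock spaces and module frames is possible, but it is awkward for infinitely generated $H$, so I would avoid it.

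\textbf{Part (1).} By Kirchberg's criterion \cite{Ki 1}, weak nuclearity of ${\cal T}(H)\hookrightarrow{\cal T}(H)''$ is the assertion that the multiplication map ${\cal T}(H)\odot({\cal T}(H)'')'\to\mathbb{B}(H_{0})$ is min-continuous. Since $M$ is semidiscrete, the normal inclusion $M\hookrightarrow\mathbb{B}(H_{0})$ is weakly nuclear, so (again by \cite{Ki 1}) $M\odot M'\to\mathbb{B}(H_{0})$ is min-continuous; restricting, the multiplication map on the coefficient algebra $M\otimes({\cal T}(H)'')'$ of the correspondence $H\otimes_{{\rm ext}}({\cal T}(H)'')'$ is a $\ast$-homomorphism into $\mathbb{B}(H_{0})$. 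Together with the contractions $\xi\otimes x\mapsto T_{\xi}x$ (well defined because $({\cal T}(H)'')'$ commutes with $T_{\xi}$ and with $M$) this data satisfies the Toeplitz--Pimsner relations, so by universality it assembles into a $\ast$-homomorphism $\Xi\colon{\cal T}(H\otimes_{{\rm ext}}({\cal T}(H)'')')\to\mathbb{B}(H_{0})$. Transporting through ${\cal T}(H\otimes_{{\rm ext}}({\cal T}(H)'')')\simeq{\cal T}(H)\otimes({\cal T}(H)'')'$ and checking on the generators $T_{\xi}\otimes x$ and $m\otimes x$, one sees $\Xi$ restricts on ${\cal T}(H)\odot({\cal T}(H)'')'$ to the multiplication map; being a $\ast$-homomorphism it is contractive, which gives the desired min-continuity. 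Finally, ${\cal T}(H)\hookrightarrow{\cal T}(H)''$ being weakly nuclear with ${\cal T}(H)$ $\sigma$-weakly dense, the fact \cite{CE} recalled in the introduction yields that ${\cal T}(H)''$ is semidiscrete.

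\textbf{Part (2).} Here I would verify condition (iii) (equivalently (ii)) of Theorem~\ref{we ex thm}, i.e. that $\Theta_{B}^{{\cal T}(H)}\colon{\cal T}(H)\odot B^{\ast\ast}\to({\cal T}(H)\otimes B)^{\ast\ast}$ is min-bounded for every $C^{\ast}$-algebra $B$. The restriction of $\Theta_{B}^{{\cal T}(H)}$ to the coefficient part $M\odot B^{\ast\ast}$ coincides, via the canonical normal inclusion $(M\otimes B)^{\ast\ast}\hookrightarrow({\cal T}(H)\otimes B)^{\ast\ast}$, with $\Theta_{B}^{M}$, which is min-bounded because $M$ is weakly exact (Theorem~\ref{we ex thm}(iii) for $M$); hence it extends to a $\ast$-homomorphism of the coefficient algebra $M\otimes B^{\ast\ast}$ of the correspondence $H\otimes_{{\rm ext}}B^{\ast\ast}$. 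Together with the Toeplitz operators $z_{M}T_{\xi}\otimes 1\in({\cal T}(H)\otimes B)^{\ast\ast}$ this satisfies the Toeplitz--Pimsner relations, so by universality and the isomorphism ${\cal T}(H)\otimes B^{\ast\ast}\simeq{\cal T}(H\otimes_{{\rm ext}}B^{\ast\ast})$ it assembles into a $\ast$-homomorphism $\Xi\colon{\cal T}(H\otimes_{{\rm ext}}B^{\ast\ast})\to({\cal T}(H)\otimes B)^{\ast\ast}$ whose restriction to ${\cal T}(H)\odot B^{\ast\ast}$ is $\Theta_{B}^{{\cal T}(H)}$; contractivity of $\Xi$ gives the bound. (Combining with the normal extension of the representation of ${\cal T}(H\otimes_{{\rm ext}}B)\simeq{\cal T}(H)\otimes B$ exactly as after Lemma~\ref{lemma a} gives the statement in the form of condition (ii).) The ``in particular'': if $M$ has separable predual and $H$ is countably generated over $M$, then $H_{0}={\cal F}(H)\otimes_{M}L^{2}(M,\omega)$ is separable, so Corollary~\ref{wk ex main} upgrades weak exactness of ${\cal T}(H)$ in $\mathbb{B}(H_{0})$ to weak exactness of its $\sigma$-weak closure ${\cal T}(H)''$.

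\textbf{Expected main obstacle.} The genuinely delicate point is, in part (2), the bookkeeping with the central projection $z_{M}$: one must verify that the extended coefficient representation together with the operators $z_{M}T_{\xi}\otimes 1$ really do satisfy the Toeplitz--Pimsner relations inside $({\cal T}(H)\otimes B)^{\ast\ast}$ and that the resulting $\Xi$ restricts to $\Theta_{B}^{{\cal T}(H)}$ on ${\cal T}(H)\odot B^{\ast\ast}$ — this is the analogue of the identity $\widetilde{\alpha}_{g}(z_{M})=z_{M}$ and the covariance computation in Lemma~\ref{lemma a}. In part (1) no double duals intervene and this step is harmless; the only thing to be careful about there (and, through Corollary~\ref{wk ex main}, in part (2)) is that the faithful representation on $H_{0}$ makes $M$ act normally so that \cite{CE} and Corollary~\ref{wk ex main} apply, which is exactly what the construction in Subsection~\ref{free} provides.
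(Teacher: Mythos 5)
Your part (1) is essentially the paper's own proof: semidiscreteness of $M$ gives min-continuity of the multiplication map $M\odot {\cal T}(H)'\rightarrow\mathbb{B}(H_{0})$, the map $\xi\otimes x\mapsto T_{\xi}x$ supplies the Toeplitz data, and universality together with ${\cal T}(H\otimes_{\rm ext}{\cal T}(H)')\simeq{\cal T}(H)\otimes{\cal T}(H)'$ yields weak nuclearity, then $\cite{CE}$ gives semidiscreteness of ${\cal T}(H)''$. For part (2) you take a genuinely different route: the paper never passes to double duals. It starts from $\pi$, $B$, $J$ as in Definition~\ref{def wk ex}, observes that the restriction of $\pi$ to $M\otimes B$ kills $M\otimes J$ and is normal on $M$ (because $M\subset{\cal T}(H)''$ acts normally on $L^{2}({\cal T}(H),\omega\circ E)$), so weak exactness of $M$ makes the induced map min-continuous on $M\otimes(B/J)$; feeding this representation together with $\xi\otimes\dot b\mapsto\pi(T_{\xi}\otimes 1)\pi_{B/J}(\dot b)$ into universality and ${\cal T}(H\otimes_{\rm ext}(B/J))\simeq{\cal T}(H)\otimes(B/J)$ bounds $\tilde\pi$ directly, with no support projections at all. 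Your detour through condition (iii) of Theorem~\ref{we ex thm} is workable in principle, but it creates exactly the bookkeeping you flag, and there you make a concrete misstep.

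Namely, the claim that $\Theta_{B}^{{\cal T}(H)}$ restricted to $M\odot B^{\ast\ast}$ \emph{coincides} with $\Theta_{B}^{M}$ under $(M\otimes B)^{\ast\ast}\hookrightarrow({\cal T}(H)\otimes B)^{\ast\ast}$ is false in general. The projection entering $\Theta_{B}^{{\cal T}(H)}$ is $z:=z_{{\cal T}(H)''}\in{\cal T}(H)^{\ast\ast}$, while $\Theta_{B}^{M}$ uses $z_{M}\in M^{\ast\ast}$; if $\iota\colon M^{\ast\ast}\rightarrow{\cal T}(H)^{\ast\ast}$ denotes the canonical embedding, uniqueness of normal extensions of $M\hookrightarrow{\cal T}(H)''$ only gives $z\leq\iota(z_{M})$, and the inequality is strict already for $M=\mathbb{C}$, $H=\mathbb{C}$ (the classical Toeplitz algebra, where $\iota(z_{M})=1\neq z$), so the two maps differ. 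What is true, and is all you need, is that $\Theta_{B}^{{\cal T}(H)}|_{M\odot B^{\ast\ast}}$ equals the compression of the image of $\Theta_{B}^{M}$ by the projection $z\otimes 1$, hence is min-bounded. Similarly the Toeplitz operators must be cut by $z$ and not by $z_{M}$: centrality of $z$ in ${\cal T}(H)^{\ast\ast}$ is what gives $(zT_{\xi}\otimes b)^{\ast}(zT_{\eta}\otimes c)=z\langle\xi,\eta\rangle\otimes b^{\ast}c$ and lets the Pimsner relations close up against the coefficient representation $x\otimes b\mapsto z(x\otimes b)$, whereas $\iota(z_{M})$ need not commute with $T_{\xi}$. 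With these corrections your $\Xi$ exists and agrees with $\Theta_{B}^{{\cal T}(H)}$ on the dense $\ast$-subalgebra generated by $M$ and the $T_{\xi}$ tensored with $B^{\ast\ast}$, which yields the bound, and the ``in particular'' via Corollary~\ref{wk ex main} is fine; so the argument is repairable, but as written the key identification is wrong, and the paper's $B/J$ argument avoids the issue entirely.
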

\begin{proof}
(1) For simplicity, we write $\widetilde{H}:=L^{2}({\cal T}(H),\omega\circ E)$. Since $M$ is semidiscrete, the following map is min-continuous:
\begin{equation*}
\pi \colon M\otimes {\cal T}(H)' \longrightarrow \mathbb{B}(\widetilde{H})\ ;\ x\otimes y \longmapsto xy.
\end{equation*}
Then, consider a well-defined map
\begin{equation*}
\tau \colon H\odot {\cal T}(H)' \longrightarrow \mathbb{B}(\widetilde{H})\ ;\ \xi \otimes x \longmapsto \pi(T_{\xi})x.
\end{equation*}
and verify relations
\begin{equation*}
\tau(a\xi b)=\pi(a)\tau(\xi)\pi(b),\quad
\tau(\xi)^{\ast}\tau(\eta)=\pi(\langle\xi,\eta\rangle),
\end{equation*}
for any $a,b\in M\otimes {\cal T}(H)'$ and $\xi,\eta\in H\odot {\cal T}(H)'$. These relations give well-definedness and boundedness of $\tau$ on $H\otimes_{{\rm ext}} {\cal T}(H)'$. 
By universality of the Toeplitz--Pimsner algebras, there exists a $\ast$-homomorphism from ${\cal T}(H\otimes_{{\rm ext}}{\cal T}(H)')$ into $\mathbb{B}(\widetilde{H})$. Since ${\cal T}(H\otimes_{{\rm ext}}{\cal T}(H)')\simeq{\cal T}(H)\otimes{\cal T}(H)'$, we have min-boundedness of ${\cal T}(H)\otimes{\cal T}(H)' \rightarrow \mathbb{B}(\widetilde{H})$ which is equivalent to weak nuclearity of ${\cal T}(H)\hookrightarrow {\cal T}(H)''$. One must verify that this is really the map we need, but it is routine.

(2) We will use a similar way. Let $\pi,\tilde{\pi},B,J,$ and $\mathbb{B}(K)$ be as in Definition $\ref{def wk ex}$. 
Since $M$ is weakly exact, $\tilde{\pi}$ is bounded on $M\otimes(B/J)$. Define $\tau$ by
\begin{equation*}
\tau \colon H\otimes_{{\rm ext}} (B/J) \longrightarrow \mathbb{B}(K)\ ;\ \xi \otimes b \longmapsto \pi_{M}(T_{\xi})\pi_{(B/J)}(b),
\end{equation*}
where $\pi_{M}$ and $\pi_{(B/J)}$ are obtained from the decomposition $\pi=\pi_{M}\times\pi_{(B/J)}$ on $M\otimes(B/J)$. 
Then, by universality and ${\cal T}(H\otimes_{{\rm ext}}(B/J))\simeq{\cal T}(H)\otimes(B/J)$, we have boundedness of $\tilde{\pi}$.
\end{proof}

To state the following corollary, we define a $C^{\ast}$-correspondence associated with a u.c.p. map. Let $A$ be a unital $C^{\ast}$-algebra and $\phi$ be a u.c.p. map on $A$. 
We define an $A$-valued semi inner product on $A\odot A$ by $\langle a\otimes b,c\otimes d\rangle:=b^{\ast}\phi(a^{\ast}c)d$ and construct the Hilbert $A$-module $H_{A}^{\phi}$ from $A\odot A$ by separation and completion. 
Define a left $A$-action on $H_{A}^{\phi}$ by $a(b\otimes c):=ab\otimes c$ so that $H_{A}^{\phi}$ is an $A$-$A$ correspondence. Now, we can define ${\cal T}(H_{A}^{\phi})$ for faithful $\phi$. Note that, in the case where $A$ is a von Neumann algebra, the left $A$-action is faithful and normal if $\phi$ is so.
\begin{Cor}
Let $M$ be a von Neumann algebra and $D\subset M$ be a unital von Neumann subalgebra with a faithful normal conditional expectation $E_{D}$. Put ${\cal M}:=(M,E_{D})\mathbin{\bar{\ast}}_{D}(\mathbb{B}(H)\mathbin{\bar{\otimes}}D,\omega \otimes \mathrm{id})$, where $H$ is a separable Hilbert space (possibly finite dimensional) and $\omega$ is a vector state on $\mathbb{B}(H)$.
\begin{itemize}
	\item[$(1)$] If $M$ is semidiscrete$, then \ {\cal M}$ is semidiscrete.
	\item[$(2)$] If $M$ is weakly exact and has separable predual, then ${\cal M}$ is weakly exact.
\end{itemize}
\end{Cor}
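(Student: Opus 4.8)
The plan is to deduce both parts from the preceding Proposition on Toeplitz--Pimsner algebras, by realizing ${\cal M}$, up to a corner and a matrix amplification, as the Toeplitz--Pimsner von Neumann algebra ${\cal T}(\mathcal{H})''$ attached to an $M$-$M$ correspondence $\mathcal{H}$ on which $M$ acts normally and faithfully by the left. Let $\phi\colon M\rightarrow M$ be the normal u.c.p.\ map $\phi(x):=E_{D}(x)$, regarded as taking values in $M$; it is faithful since $E_{D}$ is. Let $H_{M}^{\phi}$ be the correspondence introduced just before the Corollary; its left $M$-action is the standard representation on $L^{2}(M,E_{D})$ (acting on the first leg), hence normal and faithful, and one checks $1\otimes d=d\otimes 1$ in $H_{M}^{\phi}$ for $d\in D$. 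Write $H=\mathbb{C}\xi\oplus H^{\circ}$ with $\xi$ the unit vector implementing $\omega$, fix an orthonormal basis $(\xi_{i})_{i\in I}$ of $H^{\circ}$ (so $|I|=\dim H-1$, countable since $H$ is separable), and set $\mathcal{H}:=\bigoplus_{i\in I}H_{M}^{\phi}$; this is an $M$-$M$ correspondence with normal faithful left action, and it is countably generated when $M_{*}$ is separable. For $\dim H=1$ everything is trivial ($\mathcal{H}=0$, ${\cal T}(\mathcal{H})''=M$, ${\cal M}=M\mathbin{\bar{\ast}}_{D}D=M$), so assume $\dim H\ge 2$.

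The central step, and the part I expect to be the main obstacle, is the structural identification of ${\cal M}$. Let $e\in\mathbb{B}(H)\mathbin{\bar{\otimes}}D\subset{\cal M}$ be the projection onto $\mathbb{C}\xi$, that is $p_{\xi}\otimes 1$; inside $\mathbb{B}(H)\mathbin{\bar{\otimes}}D$ one has $e\sim 1$ when $\dim H=\infty$ and $1\sim n\cdot e$ when $\dim H=n<\infty$, so ${\cal M}\cong e{\cal M}e$ in the first case and ${\cal M}\cong\mathbb{M}_{n}(e{\cal M}e)$ in the second. I would then prove $e{\cal M}e\simeq{\cal T}(\mathcal{H})''$, compatibly with the distinguished copies of $M$ on the two sides, by comparing the two Fock-space models: in the free-product representation the part of the free-product Hilbert space contributed by $\mathbb{B}(H)\mathbin{\bar{\otimes}}D$ that is orthogonal to $D$ decomposes, as a $D$-$D$ correspondence, into $|I|$ copies of the identity correspondence ${}_{D}L^{2}(D)_{D}$ indexed by the directions $\xi_{i}$; inducing these copies along $L^{2}(M,E_{D})$ and reorganizing the alternating tensor words of the free product should identify the compression by $e$ with the full Fock module ${\cal F}(\mathcal{H})$, the creation operators $T_{(1\otimes 1)_{i}}$ coming from the off-diagonal directions $\xi_{i}$. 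The delicate points are matching the two Fock constructions precisely, controlling the modular twist attached to the non-tracial (indeed non-faithful) vector state $\omega$ — which, crucially, lives only on the $\mathbb{B}(H)$ leg and so is orthogonal to the amalgam $D$ — and treating the finite- and infinite-dimensional cases for $H$ uniformly. If one prefers, it is enough to verify the weaker statement that ${\cal M}$ is the range of a normal conditional expectation on an amplification of ${\cal T}(\mathcal{H})''$; all that follows uses only this.

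Granting the identification, both conclusions are formal. For $(1)$: $M$ carries a faithful normal state, as the free-product construction presupposes, and is semidiscrete, so part $(1)$ of the preceding Proposition gives that ${\cal T}(\mathcal{H})''$ is semidiscrete; since semidiscreteness coincides with injectivity, which passes to corners, to matrix amplifications, and to ranges of normal conditional expectations, ${\cal M}$ is semidiscrete. For $(2)$: $M$ is weakly exact with separable predual and $\mathcal{H}$ is countably generated (as $H$ is separable and $M_{*}$ is separable), so part $(2)$ of the preceding Proposition gives that ${\cal T}(\mathcal{H})''$ is weakly exact; weak exactness passes to matrix amplifications by Proposition \ref{ten pro}, and it passes to corners and to ranges of normal conditional expectations by condition $(\mathrm{v})$ of Theorem \ref{we ex thm}. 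Indeed, if $E_{0}\colon N\rightarrow P$ is a normal completely contractive projection (a conditional expectation, or a corner map $x\mapsto qxq$) with $N$ weakly exact, and $F\subset P\subset N$ is finite dimensional, then the c.c.\ maps $\phi_{j}\colon F\rightarrow\mathbb{M}_{n(j)}$ and $\psi_{j}\colon\phi_{j}(F)\rightarrow N$ with $\psi_{j}\circ\phi_{j}\rightarrow\mathrm{id}_{F}$ point $\sigma$-weakly, supplied by $(\mathrm{v})$ for $F\subset N$, yield $\phi_{j}$ together with $E_{0}\circ\psi_{j}\colon\phi_{j}(F)\rightarrow P$, and $(E_{0}\circ\psi_{j})\circ\phi_{j}=E_{0}\circ(\psi_{j}\circ\phi_{j})\rightarrow\mathrm{id}_{F}$ point $\sigma$-weakly by normality of $E_{0}$. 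Hence ${\cal M}$ is weakly exact.
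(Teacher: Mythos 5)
Your reduction of both statements to the preceding Proposition is fine in its formal part (passing semidiscreteness and weak exactness through corners, matrix amplifications and normal conditional expectations via condition $(\mathrm{v})$ of Theorem $\ref{we ex thm}$ is correct), but the structural identification on which everything rests, namely $e{\cal M}e\simeq{\cal T}(\mathcal{H})''$ with $\mathcal{H}=\bigoplus_{\dim H-1}H_{M}^{E_{D}}$, is not proved --- you yourself flag it as the main obstacle --- and it is in fact false as stated. Test it in the degenerate case $M=D$, $E_{D}=\mathrm{id}$: the amalgamated free product then collapses to ${\cal M}=\mathbb{B}(H)\mathbin{\bar{\otimes}}D$, so $e{\cal M}e=(p_{\xi}\otimes1)(\mathbb{B}(H)\mathbin{\bar{\otimes}}D)(p_{\xi}\otimes1)\simeq D$, whereas $H_{M}^{E_{D}}$ is the identity correspondence of $D$, so for $\dim H\geq2$ your ${\cal T}(\mathcal{H})''$ contains isometries with orthogonal ranges and is certainly not isomorphic to $D$. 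The correct identification involves no compression and a single copy of the correspondence: by $\cite[\rm Exercise\ 4.8.1]{BO}$ one has $({\cal T}(H_{M}^{E_{D}}),E_{D}\circ E)\simeq(M,E_{D})\ast_{D}({\cal T}(\mathbb{C})\otimes D,\omega\otimes\mathrm{id}_{D})$, and since the weak closure of ${\cal T}(\mathbb{C})$ in its vacuum representation is $\mathbb{B}(\ell^{2})$ with a vector state, the weak closure of the right-hand side is exactly ${\cal M}$ for infinite-dimensional $H$. Under this isomorphism your $e=p_{\xi}\otimes1$ corresponds to the vacuum-defect projection $1-TT^{\ast}$, and compressing by it collapses the Fock structure rather than producing another Toeplitz--Pimsner algebra. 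A further, more minor, slip: $e\sim1$ inside $\mathbb{B}(H)\mathbin{\bar{\otimes}}D$ fails for $\dim H=\infty$ whenever $D$ is finite ($e$ is then a finite projection while $1$ is not); one would have to work with the amplification $e{\cal M}e\mathbin{\bar{\otimes}}\mathbb{B}(\ell^{2})$ instead, which is repairable but again feeds into the unproved identification.

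For comparison, the actual proof goes as follows: the cited exercise gives ${\cal M}\simeq{\cal T}(H_{M}^{E_{D}})''$ directly when $H$ is infinite dimensional, compatibly with the canonical conditional expectations, so the previous Proposition applies at once; the finite-dimensional case is then obtained not by cutting by a corner but by compressing $\mathbb{B}(\ell^{2})$ with an $n$-dimensional projection whose range contains the vacuum vector, which is a normal $\omega$-preserving conditional expectation onto $\mathbb{M}_{n}$ and hence, taking the free product with $\mathrm{id}_{M}$ as in Subsection $\ref{free}$, yields a normal conditional expectation from ${\cal M}$ with infinite $H$ onto ${\cal M}$ with $n$-dimensional $H$; your final permanence argument is then exactly what finishes the proof. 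If you want to salvage your route, you must either prove a correct compression formula for $e{\cal M}e$ (which is not the algebra you wrote) or switch to the full-algebra identification above.
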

\begin{proof}
By $\cite[\rm Exercise\ 4.8.1]{BO}$, we have a natural isomorphism $({\cal T}(H_{M}^{E_{D}}),E_{D}\circ E)\simeq(M,E_{D})\ast_{D}({\cal T}(\mathbb{C})\otimes D,\omega\otimes\mathrm{id}_{D})$, where $\omega$ is the canonical conditional expectation on ${\cal T}(\mathbb{C})$ (i.e. the vacuum state on ${\cal T}(\mathbb{C})$). 
The $\sigma$-weak closure of the left hand side is weakly exact (respectively, semidiscrete) since it satisfies the assumption in the previous proposition. Hence the $\sigma$-weak closure of the right hand side which is isomorphic to $\cal M$ with the infinite $H$ is weakly exact (respectively, semidiscrete), since the above isomorphism preserves the canonical conditional expectations.

Thus, we proved this corollary for the infinite $H$. Finally, take any $n$-dim projection $p$ in $\mathbb{B}(H)$ whose range contains $(1_{{\cal T}(\mathbb{C})})^{\wedge}$. Then the compression by $p$ is a normal conditional expectation from $\mathbb{B}(H)$ onto $p\mathbb{B}(H) p\simeq \mathbb{M}_{n}$ which preserves $\omega$. 
Hence, we have a normal conditional expectation from $\cal M$ with the infinite $H$ onto $\cal M$ with an $n$-dim $H$. This completes the proof.
\end{proof}

\section{\bf Application to Condition (AO)}\label{ap ao}

In $\cite{solid 1}$, $\cite{solid 2}$, and $\cite{solid 3}$, N. Ozawa proved celebrated theorems for solid von Neumann algebras which gave a new insight on group von Neumann algebras ($\cite{solid 2}$ was with S. Popa). Later, in his book $\cite{BO}$, he generalized his results with bi-exactness of groups. In this section, we generalize this theorem to crossed products with bi-exact groups and we then give some new examples of prime factors. In a part of the generalization, we will use our weak exactness.

\subsection{\bf Condition (AO)}\label{co ao}

We first recall condition (AO), (semi)solidity and primeness.
We say a von Neumann algebra $M\subset \mathbb{B}(H)$ satisfies $\it condition$ (AO) if there exist unital $\sigma$-weakly dense $C^{\ast}$-subalgebras $B\subset M$ and $C\subset M'$ satisfying $B$ is locally reflexive and the following map is min-continuous:
\begin{equation*}
\nu \colon B\otimes C\longrightarrow \mathbb{B}(H)/\mathbb{K}(H)\ ;\ b\otimes c\longmapsto bc+\mathbb{K}(H).
\end{equation*}
Here $\it locally$ $\it reflexive$ ($\it property$ $C''$) means that the inclusion $B^{\ast\ast}\odot A\hookrightarrow (B\otimes A)^{\ast\ast}$ is isometric on $B^{\ast\ast}\otimes A$ for any $C^{\ast}$-algebra $A$.
We say a finite von Neumann algebra $M$ is $\it solid$ (respectively, $\it semisolid$) if for any diffuse (respectively, type II) unital von Neumann subalgebra $N\subset M$, the relative commutant $N'\cap M$ is injective. We say a von Neumann algebra $M$ is $\it prime$ if for any tensor decomposition $M=M_{1}\mathbin{\bar{\otimes}}M_{2}$, either one of $M_{i}$ ($i=1,2$) is of type I. 
It is easy to see that, for any non-injective finite von Neumann algebras, solidity implies semisolidity and semisolidity implies primeness.

Ozawa proved that a finite von Neumann algebra is solid if it satisfies condition (AO) $\cite{solid 1}$. In particular, he proved that $L\Gamma$ is solid if $\Gamma$ is bi-exact (see Definition $\ref{bi-ex def}$).
 In the proof, local reflexivity is used to show the following statement:
\begin{itemize}
\item for normal u.c.p. maps $\phi\colon M\rightarrow \mathbb{B}(K)$ and $\psi \colon M' \rightarrow \mathbb{B}(K)$ which have commuting ranges, if the u.c.p. map $\phi\times\psi \colon M\odot C\ni b\otimes c \mapsto \phi(b)\psi(c)\in\mathbb{B}(K)$ is min-bounded on $B\otimes C$, then $\phi\times\psi$ is min-bounded on $M\otimes C$.
\end{itemize}
If $B\subset M$ and $C\subset M'$ satisfy this condition, we do not need local reflexivity of $B$. Hence, in stead of this, we can use weak exactness of $C\subset M'$. 
Indeed, if $C$ is weakly exact in $M'$ and $\phi,\psi$ are as above, then it is not so difficult to see that $(\phi\mid_{B})^{\ast\ast}\times\psi \colon B^{\ast\ast}\odot C\rightarrow \mathbb{B}(K)$ is min-bounded (use a similar manner to as in the proof of Theorem $\ref{we ex thm}$ (1) $\rm(v)\Rightarrow(i)$). 
Then, by the normality of $\phi$ on $M$ and since $B$ is unital, the restriction of $(\phi\mid_{B})^{\ast\ast}\times\psi$ on $B^{\ast\ast}z_{M}\otimes C\simeq M\otimes C$ coincides with the extension of $\phi\times\psi$ on $M\otimes C$, and so $\phi\times\psi$ is bounded on $M\otimes C$.

Summary, a key lemma of Ozawa's theorem ($\cite[\rm Lemma\ 5]{solid 1}$) is translated as follows.
\begin{Lem}\label{wk ex ao}
Let $M\subset \mathbb{B}(H)$ be a von Neumann algebra, $p\in M$ be a projection, and $N\subset pMp$ be a unital von Neumann subalgebra with a normal conditional expectation $E_{N}$ from $pMp$ onto $N$. Let $B\subset M$ and $C\subset M'$ be unital $\sigma$-weakly dense $C^{\ast}$-subalgbras with $C$ weakly exact in $M'$. Assume that the u.c.p. map
\begin{equation*}
\Phi_{N}\colon B\odot C\longrightarrow \mathbb{B}(pH)\ ;\ b\otimes c\longmapsto E_{N}(pbp)cp 
\end{equation*}
is min-continuous on $B\otimes C$. Then $N$ is injective.
\end{Lem}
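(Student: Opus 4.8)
The plan is to factor $\Phi_N$ through the normal u.c.p. map $\phi\colon M\to\mathbb{B}(pH)$, $x\mapsto E_N(pxp)$, and the normal $\ast$-homomorphism $\psi\colon M'\to\mathbb{B}(pH)$, $y\mapsto yp$ (normal for the $\sigma$-weak topologies of $M$ and of $M'$, respectively), whose ranges commute because $p\in M$, the first lying in $N$ and the second in $M'p$: indeed on $B\odot C$ one has $\Phi_N=(\phi\times\psi)|_{B\odot C}$, and moreover $\phi$ restricts to the identity on $N$ since $E_N$ is a conditional expectation onto $N\subset pMp$. Once one knows that $\phi\times\psi$ is already min-bounded on the larger space $M\otimes C$, the conclusion follows by exactly the argument of Ozawa in \cite[Lemma~5]{solid 1}; as explained in the discussion preceding this lemma, that argument invokes local reflexivity of $B$ only to pass from $B\otimes C$ to $M\otimes C$, and everything else goes through verbatim. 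So the entire new content is to carry out this passage using weak exactness of $C$ in $M'$ in place of local reflexivity of $B$.

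For that passage I would argue as in the proof of $(\mathrm{v})\Rightarrow(\mathrm{i})$ of Theorem~\ref{we ex thm}. Fix $z\in B^{\ast\ast}\odot C$ and a finite-dimensional operator space $E\subset C$ with $z\in B^{\ast\ast}\odot E$; weak exactness of $C$ in $M'$ provides c.c. maps $\alpha_i\colon E\to\mathbb{M}_{k(i)}$ and $\beta_i\colon\alpha_i(E)\to C$ with $\beta_i\circ\alpha_i\to\mathrm{id}_E$ in the point $\sigma$-weak topology of $M'$. Since $\mathbb{M}_{k(i)}$ is nuclear and the ranges of $(\phi|_B)^{\ast\ast}$ and $\psi\circ\beta_i$ commute, the map $(\phi|_B)^{\ast\ast}\times(\psi\circ\beta_i)$ is contractive on $B^{\ast\ast}\otimes\mathbb{M}_{k(i)}$, so precomposition with $\mathrm{id}\otimes\alpha_i$ gives contractions $B^{\ast\ast}\otimes E\to\mathbb{B}(pH)$; letting $i\to\infty$ and using normality of $\psi$ on $M'$ yields $\|((\phi|_B)^{\ast\ast}\times\psi)(z)\|\leq\|z\|$, i.e. $(\phi|_B)^{\ast\ast}\times\psi$ is min-bounded on $B^{\ast\ast}\otimes C$. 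Restricting along the embedding $M\simeq B^{\ast\ast}z_M\hookrightarrow B^{\ast\ast}$, which induces an isometric inclusion $M\otimes C\hookrightarrow B^{\ast\ast}\otimes C$, and using the normality of $\phi$ (so that $(\phi|_B)^{\ast\ast}$ becomes $\phi$ after this restriction), we obtain that $\phi\times\psi\colon M\odot C\to\mathbb{B}(pH)$, $x\otimes c\mapsto E_N(pxp)cp$, is min-bounded on $M\otimes C$, which is what is needed.

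The main obstacle is precisely this step, that is, verifying that the matrix-approximation characterization of weak exactness of $C$ in $M'$ really does the job played by local reflexivity of $B$ in Ozawa's proof; this requires care with the identification $B^{\ast\ast}z_M\simeq M$ and with the two distinct $\sigma$-weak topologies at play (that of $B\subset M$, governing $\phi$, and that of $C\subset M'$, governing $\psi$). After that, the concluding step—dilating $\phi$, lifting $\psi$ to a normal representation of $M'$ inside the commutant of the dilation (which is where min-boundedness on $M\otimes C$ enters) and thereby showing that $\phi$, hence the inclusion $N=\phi(M)\hookrightarrow\mathbb{B}(pH)$, is weakly nuclear, so that $N$ is injective—is routine, and I would simply invoke \cite[Lemma~5]{solid 1}.
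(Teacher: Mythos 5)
Your overall architecture is the paper's: factor $\Phi_N$ as $\phi\times\psi$ with $\phi=E_N(p\,\cdot\,p)$ and $\psi=(\cdot)p$, use weak exactness of $C$ in $M'$ (in place of local reflexivity of $B$) to pass from min-continuity on $B\otimes C$ to min-continuity of $\phi\times\psi$ on $M\otimes C$, and then invoke \cite[Lemma 5]{solid 1}. But the one step carrying all the new content is not justified, and the justification you give is wrong. You claim that $(\phi|_B)^{\ast\ast}\times(\psi\circ\beta_i)$ is contractive ``since $\mathbb{M}_{k(i)}$ is nuclear and the ranges commute''; note first that this map is only defined on $B^{\ast\ast}\otimes\alpha_i(E)$, not on $B^{\ast\ast}\otimes\mathbb{M}_{k(i)}$, and more importantly the commuting-ranges trick applies to completely positive maps defined on $C^{\ast}$-algebras, whereas $\psi\circ\beta_i$ is merely a c.c.\ map on the operator subspace $\alpha_i(E)\subset\mathbb{M}_{k(i)}$. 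A product of a u.c.p.\ map with a c.c.\ map on a subspace, even with commuting ranges, has no automatic bound for the minimal norm (crude estimates inside $\mathbb{M}_{k(i)}(B^{\ast\ast})$ give constants growing with $k(i)$, which is useless for your limit). To repair it by extending $\psi\circ\beta_i$ to a c.c.p.\ map on all of $\mathbb{M}_{k(i)}$ you would need the extension to take values in $N'\cap\mathbb{B}(pH)$, and such extensions exist precisely when $N'\cap\mathbb{B}(pH)$ is injective, i.e.\ when $N$ is injective --- the conclusion. The tell-tale symptom is that your argument for min-boundedness of $(\phi|_B)^{\ast\ast}\times\psi$ on $B^{\ast\ast}\otimes C$ never uses the hypothesis that $\Phi_N$ is min-continuous on $B\otimes C$; if it were valid, the lemma's conclusion would follow without that hypothesis, and taking $M=N=L\mathbb{F}_2$, $p=1$, $E_N=\mathrm{id}$, $B=C^{\ast}_{\lambda}(\mathbb{F}_2)$ and $C=JBJ$ (which is exact, hence weakly exact in $M'$) it would prove that $L\mathbb{F}_2$ is injective.

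The correct way to run your ``(v)$\Rightarrow$(i)-style'' limit is to feed the hypothesis in at exactly this point. Since $\beta_i$ takes values in $C$, the composition $\Phi_N\circ(\mathrm{id}_B\otimes\beta_i)$ is completely contractive on $B\otimes\alpha_i(E)$ --- this is where min-continuity of $\Phi_N$ on $B\otimes C$ enters. Because $\alpha_i(E)$ is finite dimensional, $B^{\ast\ast}\otimes\alpha_i(E)$ embeds isometrically into $(B\otimes\alpha_i(E))^{\ast\ast}$, so composing the second adjoint of $\Phi_N\circ(\mathrm{id}_B\otimes\beta_i)$ with the canonical normal $\ast$-epimorphism $\mathbb{B}(pH)^{\ast\ast}\rightarrow\mathbb{B}(pH)$ and restricting yields contractions $\Xi_i\colon B^{\ast\ast}\otimes\alpha_i(E)\rightarrow\mathbb{B}(pH)$; approximating $b\in B^{\ast\ast}$ by a bounded net in $B$ and using normality of $\phi$ on $M$ (through $B^{\ast\ast}z_M\simeq M$) one checks $\Xi_i(b\otimes x)=(\phi|_B)^{\ast\ast}(b)\,\psi(\beta_i(x))$. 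With this uniform bound your limiting step (normality of $\psi$ on $M'$ plus lower semicontinuity of the norm under $\sigma$-weak limits) does give $\|((\phi|_B)^{\ast\ast}\times\psi)(z)\|\leq\|z\|$, and the remaining steps --- restriction to $B^{\ast\ast}z_M\otimes C\simeq M\otimes C$ and the appeal to \cite[Lemma 5]{solid 1} --- are exactly what the paper intends. Alternatively one can bypass condition (v) and use condition (iii) of Theorem \ref{we ex thm} for $C$ weakly exact in $M'$: compose the isometry $\Theta_B\colon C\otimes B^{\ast\ast}\rightarrow(C\otimes B)^{\ast\ast}$ with $\Phi_N^{\ast\ast}$ and the normal part of $\mathbb{B}(pH)^{\ast\ast}$, and identify the restriction to $C\odot B^{\ast\ast}$ with $\psi\times(\phi|_B)^{\ast\ast}$; either route makes essential use of the min-continuity of $\Phi_N$, which any correct proof must.
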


\subsection{\bf Popa's intertwining techniques}

In $\cite{Po 1}$, S. Popa introduced a powerful tool to study $\cal HT$ factors and these fundamental groups, and he improved this technique in $\cite{Po 2}$. 
It gives a useful sufficient condition on unitary conjugacy of Cartan subalgebras of a finite von Neumann algebra, but Ozawa and Popa used this tool in a different context to study solidness. In this subsection, we recall this technique and generalize it for non-finite von Neumann algebras.

\begin{Def}\label{em def}\upshape
Let $M$ be a $\sigma$-finite von Neumann algebra, $A\subset M$ and $B\subset M$ be (possibly non-unital) von Neumann subalgebras, $E_{B}$ be a faithful normal conditional expectation from $1_{B}M1_{B}$ onto $B$. Assume $B$ is finite. We say $A$ $\it embeds$ $\it in$ $B$ $\it inside$ $M$ and denote by $A\preceq_{M}B$ if there exist non-zero projections $e\in A$ and $f\in B$, a unital normal $\ast$-homomorphism $\theta \colon eAe \rightarrow fBf$, and a partial isometry $v\in M$ such that
\begin{itemize}
\item the central support $z_{A}(e)$ is finite in $A$,
\item $vv^{\ast}\leq e$ and $v^{\ast}v\leq f$,
\item $v\theta(x)=xv$ for any $x\in eAe$.
\end{itemize}
\end{Def}

We often call above $v$ $\it intertwiner$. It is obvious that $A\preceq_{M}B$ holds if we have $pAp\preceq_{M}B$ for a projection $p\in A$. If $B$ is abelian and $A$ has no type I direct summand, then the existence of $\theta$ above says $A\not\preceq_{M}B$. The same thing happens for an injective $B$ and a non-injective $A$ satisfying $eAe$ is non-injective for any projection $e\in A$. We note that the condition on $A$ is equivalen to that $A$ has no injective direct summand, since 
\begin{equation*}
eAe \textrm{ \ injective} \Longleftrightarrow eA'e\simeq z(e)A' \textrm{ \ injective} \Longleftrightarrow z(e)A \textrm{ \ injective},
\end{equation*}
where $z(e)$ is the central support of $e$ $\cite[\rm Theorem\ XV.3.2]{Tak 3}$.
We will use these observation later.

Now, we recall Popa's intertwining techniques by bimodules in finite and semifinite settings. We mention that the finite case is generalized by Ueda and Houdayer-Vaes very recently.
\begin{Thm}[{\cite{Po 1}\cite{Po 2}\cite{Ueda}\cite{HV}}]\label{em thm}
Let $M,A,B$, and $E_{B}$ be as in Definition $\ref{em def}$, and let $\tau$ be a faithful normal trace state on $B$. Then the following conditions are equivalent.
\begin{itemize}
	\item[$(1)$]The algebra $A$ embeds in $B$ inside $M$.
	\item[$(2)$]There exist a projection $p\in \mathbb{M}_{n}\otimes B$, a unital normal $\ast$-homomorphism $\theta \colon A \rightarrow p(\mathbb{M}_{n}\otimes B)p$ with $n\geq 1$, and a partial isometry $w\in \mathbb{M}_{n}\otimes M$ such that 
\begin{itemize}
\item[$\bullet$] $ww^{\ast}\leq e_{1,1}\otimes1$ and $w^{\ast}w\leq p$,
\item[$\bullet$] $w\theta(x)=(e_{1,1}\otimes x)w$ for any $x\in A$.
\end{itemize}
	\item[$(3)$]There exists no net $(w_{\lambda})_{\lambda}$ of unitaries of $A$ such that  $E_{B}(b^{\ast}w_{\lambda}a)\rightarrow 0$ strongly (or equivalently,  $\displaystyle\lim_{\lambda}\|E_{B}(b^{\ast}w_{\lambda}a)\|_{2,\tau}=0$) for any $a,b\in 1_{A}M1_{B}$.
	\item[$(4)$]There exists a non-zero $A$-$B$-submodule $H$ of $1_{A}L^{2}(M)1_{B}$ with $\dim_{B}H<\infty$, where the dimension on $B$ is with respect to $\tau$.
\end{itemize}
In the case where $M$ has separable predual, we can take a sequence instead of a net in statement $(3)$.
\end{Thm}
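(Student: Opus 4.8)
The plan is to prove the implications $(1)\Rightarrow(4)$, $(4)\Rightarrow(2)$, $(2)\Rightarrow(1)$ and $(4)\Leftrightarrow(3)$, all of them phrased in terms of the basic construction $\mathcal{N}:=\langle M,e_{B}\rangle\subset\mathbb{B}(L^{2}(M))$ of the finite algebra $B$, which is semifinite because $e_{B}$ is a finite projection, and which carries its canonical faithful normal semifinite trace $\mathrm{Tr}$, normalized by $\mathrm{Tr}(xe_{B}y)=\tau(E_{B}(1_{B}xy1_{B}))$ for $x,y\in M$. Two of these implications are routine. For $(1)\Rightarrow(4)$: an intertwiner $v$ with $v\theta(x)=xv$ produces $H$, the closure of $vB$ in $1_{A}L^{2}(M)1_{B}$, which is left-$A$-invariant and of finite $B$-dimension, being the image of $\widehat{B}\cong L^{2}(B)$ under a bounded right-$B$-module map. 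For $(2)\Rightarrow(1)$: one cuts the amplified intertwiner down into $M$ by a Zorn's-lemma maximality argument combined with the comparison theory of projections in $M$ and in the finite algebra $B$, the hypothesis that the central support $z_{A}(e)$ be finite being exactly what licenses this cutting.

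For $(4)\Rightarrow(2)$ I would invoke the dimension theory of Subsection~\ref{dim}. Given a nonzero $A$-$B$-submodule $H\subset 1_{A}L^{2}(M)1_{B}$ with $\dim_{B}H<\infty$, realize $H$ as a right $B$-module in the form $q(\mathbb{C}^{n}\otimes L^{2}(B))$ with $q\in\mathbb{M}_{n}\otimes B$ a projection --- finiteness of the dimension is precisely what lets $\mathbb{C}^{n}$ be taken finite dimensional. The normal left $A$-action commutes with the right $B$-action, hence transports to a unital normal $\ast$-homomorphism $\theta\colon A\to q(\mathbb{M}_{n}\otimes B)q$; and composing the inclusion $H\hookrightarrow 1_{A}L^{2}(M)1_{B}$ with $e_{1,1}\otimes(\,\cdot\,)\colon L^{2}(M)\hookrightarrow\mathbb{M}_{n}\otimes L^{2}(M)$ yields an $A$-$B$-bimodule map which is implemented by a partial isometry $w\in\mathbb{M}_{n}\otimes M$ with $ww^{\ast}\leq e_{1,1}\otimes1$, $w^{\ast}w\leq q$ and $w\theta(x)=(e_{1,1}\otimes x)w$, i.e.\ $(2)$. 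Producing this $w$ --- the passage from the $L^{2}$-level isomorphism to a genuine element of $\mathbb{M}_{n}\otimes M$ --- is the one technical point here, and it is where finiteness of $\dim_{B}H$ is really used (to supply enough left-bounded vectors in $H$).

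It remains to connect $(3)$. That $(4)\Rightarrow(3)$ is easy: a nonzero $A$-$B$-submodule $H$ with $\dim_{B}H<\infty$ is finitely generated as a right $B$-module, say by $\zeta_{1},\dots,\zeta_{m}$, and for a net $(w_{\lambda})$ as in $(3)$ the vanishing of the $B$-valued inner products $E_{B}(\zeta_{j}^{\ast}w_{\lambda}\zeta_{i})$ would force $\|w_{\lambda}\zeta_{i}\|_{2}\to0$, since the left $A$-action keeps $w_{\lambda}\zeta_{i}$ inside $H$, contradicting $\|w_{\lambda}\zeta_{i}\|_{2}=\|\zeta_{i}\|_{2}$. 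The substantial direction is the contrapositive of $(3)\Rightarrow(4)$: assuming that $1_{A}L^{2}(M)1_{B}$ has no nonzero $A$-$B$-submodule of finite $B$-dimension, construct a vanishing net. Arguing by contradiction, if no such net exists then for some $a_{1},\dots,a_{k},b_{1},\dots,b_{k}\in 1_{A}M1_{B}$ and some $\delta>0$ one has $\sum_{i}\|E_{B}(b_{i}^{\ast}wa_{i})\|_{2,\tau}^{2}\geq\delta$ for every $w\in\mathcal{U}(A)$; by the identity $\|E_{B}(b^{\ast}wa)\|_{2,\tau}^{2}=\mathrm{Tr}\bigl((wae_{B}a^{\ast}w^{\ast})(be_{B}b^{\ast})\bigr)$ this reads as $\widetilde{\mathrm{Tr}}\bigl((1\otimes w)P(1\otimes w)^{\ast}Q\bigr)\geq\delta$ in $\mathbb{M}_{k}\otimes\mathcal{N}$, where $P:=\sum_{i}e_{i,i}\otimes a_{i}e_{B}a_{i}^{\ast}$ and $Q:=\sum_{i}e_{i,i}\otimes b_{i}e_{B}b_{i}^{\ast}$ have finite trace. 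A center-of-mass argument --- take the unique element $P_{0}$ of minimal $\|\cdot\|_{2,\widetilde{\mathrm{Tr}}}$ in the $\|\cdot\|_{2,\widetilde{\mathrm{Tr}}}$-closed convex hull of $\{(1\otimes w)P(1\otimes w)^{\ast}:w\in\mathcal{U}(A)\}$ --- produces a nonzero positive $P_{0}\in\mathbb{M}_{k}\otimes\mathcal{N}$ that commutes with $1\otimes A$, has finite $\widetilde{\mathrm{Tr}}$, and is supported on $\mathbb{C}^{k}\otimes 1_{A}L^{2}(M)1_{B}$; a spectral projection of $P_{0}$ then cuts out, via Subsection~\ref{dim}, a nonzero finite-$B$-dimensional $A$-$B$-submodule of $1_{A}L^{2}(M)1_{B}$, the desired contradiction. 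I expect two things to be the real obstacles: showing $P_{0}\neq0$, which requires propagating the lower bound $\delta$ through the weak limit by lower semicontinuity of $\xi\mapsto\widetilde{\mathrm{Tr}}(\xi Q)$; and the bookkeeping forced by $A,B$ being non-unital with $1_{A},1_{B}$ possibly distinct and $M$ possibly of type $\mathrm{III}$ --- the type of $M$ does no harm, since all the weight is carried by the finite algebra $B$ and hence by $\mathcal{N}$ and $\mathrm{Tr}$, but one must set up the basic construction on $L^{2}(M)$ rather than on $L^{2}(1_{B}M1_{B})$ precisely so that $\mathcal{U}(A)$ genuinely acts on it by conjugation, as carried out in \cite{Po 1}, \cite{Po 2}, \cite{Ueda}, \cite{HV}.

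For the final assertion, fix a faithful normal state $\varphi$ on $M$: then the unit ball of $A$ is $\|\cdot\|_{\varphi}$-separable and $B$ is $\|\cdot\|_{2,\tau}$-metrizable on bounded sets, so witnessing the vanishing condition of $(3)$ only requires letting $a,b$ run over a fixed $\|\cdot\|_{\varphi}$-dense sequence in the unit ball of $1_{A}M1_{B}$; a diagonal extraction from the net then yields a sequence.
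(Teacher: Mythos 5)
The paper does not prove this theorem at all: it is recalled verbatim from the cited sources (\cite{Po 1}, \cite{Po 2} for the finite case, \cite{Ueda}, \cite{HV} for the non-tracial one), so there is no in-paper argument to compare with. Your outline does follow the standard architecture of those proofs (basic construction $\langle M,e_{B}\rangle$ with its canonical semifinite trace, the minimal-$\|\cdot\|_{2}$ element of the closed convex hull of $\{(1\otimes w)P(1\otimes w)^{\ast}\}$, spectral cut-off, dimension theory), and the $(3)\Leftrightarrow(4)$ and $(4)\Rightarrow(2)$ blocks are sketched along the right lines.

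There are, however, concrete gaps. First, $(1)\Rightarrow(4)$ as you state it is false: $\overline{vB}$ is only an $eAe$-$B$-bimodule, since for $x\in A$ one has $xv=(xe)v$ and $xe\notin eAe$ in general; to get an $A$-$B$-submodule of finite $B$-dimension you must pass to $\overline{AvB}$, which can have infinite $B$-dimension, and then cut by a suitable nonzero central projection $z'\leq z_{A}(e)$ of $A$ on which $ez'$ is finitely comparable to $z'$ inside the finite algebra $Az_{A}(e)$ (and on which $z'v\neq 0$). This is exactly where the hypothesis that $z_{A}(e)$ is finite enters; by contrast, in $(2)\Rightarrow(1)$, where you claim that hypothesis ``licenses the cutting,'' the finiteness of the central support is a conclusion, obtained for free because $\theta$ is a unital normal map into the finite algebra $p(\mathbb{M}_{n}\otimes B)p$, so $A\ker(\theta)^{\perp}$ is finite. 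So you have located the use of the central-support condition in the wrong implication and left the implication that actually needs it unproved. Second, in $(4)\Rightarrow(3)$ the expressions $E_{B}(\zeta_{j}^{\ast}w_{\lambda}\zeta_{i})$ do not parse: the $\zeta_{i}$ are $L^{2}$-vectors, while the vanishing hypothesis in $(3)$ is only for $a,b\in 1_{A}M1_{B}$; one must either work with right-$B$-bounded vectors and approximate (which in the type $\mathrm{III}$ setting is precisely the delicate content of \cite{Ueda}, \cite{HV}), or argue through $\mathrm{Tr}\bigl((w_{\lambda}ae_{B}a^{\ast}w_{\lambda}^{\ast})(be_{B}b^{\ast})\bigr)$ and an approximation of the finite-trace projection onto $H$. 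Smaller points: the normalization of the canonical trace should read $\mathrm{Tr}(xe_{B}y)=\tau(E_{B}(yx))$, not $\tau(E_{B}(1_{B}xy1_{B}))$ (note $\tau\circ E_{B}$ is not tracial on $M$); and in the final sequence-extraction the approximation on the $b$-side cannot be done in an arbitrary $\|\cdot\|_{\varphi}$ uniformly in $w\in\mathcal{U}(A)$ --- one needs the Kadison--Schwarz estimate $\|E_{B}(x)\|_{2,\tau}\leq\|x\|_{2,\tau\circ E_{B}}$ together with the flip $\|E_{B}(b^{\ast}wa)\|_{2,\tau}=\|E_{B}(a^{\ast}w^{\ast}b)\|_{2,\tau}$, which uses traciality of $\tau$ on $B$ (compare the two-case argument in Proposition \ref{cor F} of the paper).
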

\begin{Thm}[{\cite{CH}\cite{HR}}]\label{em thm2}
Let $M$ be a semifinite von Neumann algebra  with separable predual and with a faithful normal semifinite tracial weight $\mathrm{Tr}$, and $B\subset M$ be a unital von Neumann subalgebra with $\mathrm{Tr}_{B}:=\mathrm{Tr}\mid_{B}$ semifinite. Denote by $E_{B}$  the unique $\mathrm{Tr}$-preserving conditional expectation from $M$ onto $B$. For a (possibly non-unital) von Neumann subalgebra $A$, the following conditions are equivalent.
\begin{itemize}
	\item[$(1)$]There exists a non-zero projection $q\in B$ with $\mathrm{Tr}_{B}(q)<\infty$ such that $A\preceq_{pMp}qBq$, where $p:=1_{A}\vee q$.
	\item[$(2)$]There exists no sequence $(w_n)_n$ of unitaries of $A$ such that $E_{B}(b^{\ast}w_n a)\rightarrow 0$ strongly for any $a,b\in 1_{A}M1_{B}$.
	\item[$(3)$]There exists no sequence $(w_{n})_{n}$ of unitaries of $A$ such that $\displaystyle\lim_{n}\|E_{B}(b^{\ast}w_n a)\|_{2,\mathrm{Tr}_{B}}=0$ for any $a,b\in 1_{A}M1_{B}$.
\end{itemize}
\end{Thm}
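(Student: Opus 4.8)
The plan is to reduce this semifinite statement to its finite counterpart, Theorem~\ref{em thm}, by cutting $B$ down to finite corners $qBq$ with $\mathrm{Tr}_B(q)<\infty$, and to handle the quantifier ``for every $q$'' in $(1)$ by an exhaustion over such projections. To set things up, fix $q\in B$ with $\mathrm{Tr}_B(q)<\infty$ and put $p:=1_A\vee q$. Since $q\le p$ one has $q(pMp)q=qMq$ and $1_A(pMp)q=1_AMq$, and since $E_B$ is a $B$-$B$-bimodule map with $q\in B$ the restriction $E_B|_{qMq}$ is a faithful normal conditional expectation onto the finite algebra $qBq$, obeying $E_B(qxq)=qE_B(x)q$ and preserving the trace $\mathrm{Tr}_B(q\,\cdot\,q)$. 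Applying Theorem~\ref{em thm} to $A\subset pMp$ and $qBq\subset pMp$ (with $pMp$ of separable predual, so that nets become sequences) yields the reformulation on which everything rests: $A\preceq_{pMp}qBq$ holds if and only if there is \emph{no} sequence $(w_n)_n$ of unitaries of $A$ with $\|qE_B(b^{*}w_na)q\|_{2}\to 0$ for all $a,b\in 1_AMq$.

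First I would dispatch the equivalence $(2)\Leftrightarrow(3)$ and the implication $(1)\Rightarrow(2)$. The elements $E_B(b^{*}w_na)$ are norm-bounded by $\|a\|\,\|b\|$, lie in $B$, and for square-integrable $a,b$ are themselves square-integrable with $\|E_B(b^{*}w_na)\|_{2,\mathrm{Tr}_B}\le\|b^{*}w_na\|_{2,\mathrm{Tr}}$; a short standard-form computation, using the $n$-independent Kadison--Schwarz dominations of $x_n^{*}x_n$ and $x_nx_n^{*}$ recorded below, then shows that for such a norm-bounded sequence in $B$, $\|\cdot\|_{2,\mathrm{Tr}_B}$-convergence to $0$ is equivalent to strong convergence to $0$, and that it suffices to test $(2)$ and $(3)$ on a countable $\|\cdot\|_2$-dense family of square-integrable $a,b\in 1_AM1_B$ (one works throughout inside the square-integrable ideal of the weight $\mathrm{Tr}$). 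For $(1)\Rightarrow(2)$: if $A\preceq_{pMp}qBq$ for some finite-trace $q$ and, towards a contradiction, a sequence $(w_n)$ of unitaries of $A$ satisfied $E_B(b^{*}w_na)\to 0$ strongly for all $a,b$, then for all $a,b\in 1_AMq$ one would have $\|qE_B(b^{*}w_na)q\|_{2}\le\|E_B(b^{*}w_na)\,\widehat q\,\|_{2}\to 0$ (as $\widehat q\in L^2$), contradicting the reformulation; so $(2)$, hence $(3)$, holds.

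The heart of the matter is $\neg(1)\Rightarrow(2)$, i.e.\ producing a single escaping sequence out of the family of local dichotomies. Using separability, fix projections $q_1\le q_2\le\cdots$ in $B$ with $\mathrm{Tr}_B(q_k)<\infty$ and $\bigvee_k q_k=1$, and note that every element of $1_AMq_k$ is square-integrable. By hypothesis $A\not\preceq_{p_kMp_k}q_kBq_k$ for every $k$, where $p_k:=1_A\vee q_k$, so the reformulation supplies, for each $k$, a sequence $(w^{(k)}_n)_n$ of unitaries of $A$ with $\|q_kE_B(c^{*}w^{(k)}_nd)q_k\|_2\to 0$ for all $c,d$ in a countable $\|\cdot\|_2$-dense family of $1_AMq_k$; and since the corners are nested, a sequence beating the corner $q_K$ automatically beats every corner $q_k$ with $k\le K$, so a straightforward diagonalization produces one sequence $(w_n)$ of unitaries of $A$ with
\[
\|q_kE_B(c^{*}w_nd)q_k\|_{2}\longrightarrow 0\qquad\text{for every }k\text{ and all }c,d\in 1_AMq_k .
\]
It remains to verify $E_B(b^{*}w_na)\to 0$ strongly for every square-integrable $a,b\in 1_AM1_B$ (the general case then follows by the density reduction). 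Put $x_n:=E_B(b^{*}w_na)$; Kadison--Schwarz gives the $n$-independent dominations $x_nx_n^{*}\le\|a\|^2E_B(b^{*}b)$ and $x_n^{*}x_n\le\|b\|^2E_B(a^{*}a)$ with square-integrable right-hand sides in $B$, whence by normality of $\mathrm{Tr}_B$ there exist $\delta_l\to 0$, depending on $a,b$ only, with $\|x_n-q_lx_nq_l\|_{2,\mathrm{Tr}_B}\le\delta_l$ for all $n$. Since $q_lx_nq_l=E_B(q_lb^{*}w_naq_l)=q_lE_B\big((bq_l)^{*}w_n(aq_l)\big)q_l$ with $aq_l,bq_l\in 1_AMq_l$, the displayed convergence gives $\|q_lx_nq_l\|_2\to 0$ in $n$ for each fixed $l$, so $\limsup_n\|x_n\|_{2,\mathrm{Tr}_B}\le\delta_l$ for every $l$; letting $l\to\infty$ yields $\|x_n\|_{2,\mathrm{Tr}_B}\to 0$, and therefore $x_n\to 0$ strongly.

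I expect the main obstacle to lie in this last paragraph, specifically the passage from the ``two-sided corner'' information $\|q_kE_B(\,\cdot\,)q_k\|_2\to 0$ to genuine $\|\cdot\|_{2,\mathrm{Tr}_B}$-convergence: it hinges on the uniform cut-off estimate $\|x_n-q_lx_nq_l\|_{2,\mathrm{Tr}_B}\le\delta_l$, hence on the \emph{$n$-independence} of the Kadison--Schwarz dominations, and the argument does not close without this uniformity. The subsidiary points---reducing $(2)$ and $(3)$ to countable $\|\cdot\|_2$-dense families of square-integrable test elements, and all the bookkeeping forced by $\mathrm{Tr}$ being a weight rather than a state---are routine but need care.
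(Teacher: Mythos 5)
The paper itself gives no proof of Theorem \ref{em thm2} (it is quoted from \cite{CH} and \cite{HR}), so there is nothing internal to compare with; your strategy --- cut $B$ along an exhaustion $q_1\le q_2\le\cdots$ of finite-trace projections, apply the finite intertwining Theorem \ref{em thm} in each corner $qBq\subset pMp$, and diagonalize --- is exactly the route taken in those references, and the part of your argument dealing with \emph{square-integrable} test elements is correct: the identification $E_{qBq}=E_B|_{qMq}$, the implication $(1)\Rightarrow(2)$, the nestedness/diagonalization, and the uniform cut-off estimate $\|x_n-q_lx_nq_l\|_{2,\mathrm{Tr}_B}\le\delta_l$ coming from the $n$-independent Kadison--Schwarz dominations all check out.

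The genuine gap is the last clause, ``the general case then follows by the density reduction.'' Conditions $(2)$ and $(3)$ quantify over \emph{all} $a,b\in 1_AM1_B$, not only square-integrable ones, and for a non-square-integrable $b$ the domination $x_nx_n^{*}\le\|a\|^2E_B(b^{*}b)$ has a non-integrable right-hand side, so the uniform left cut-off control $\|(1-q_l)x_n\|_2\le\delta_l$ --- the very uniformity you correctly identified as essential --- is lost; from $\|q_lE_B(b^{*}w_na)q_k\|_2\to0$ for all $k,l$ one only gets $\sigma$-weak convergence of $E_B(b^{*}w_na)$, not strong convergence, and no approximation of $b$ inside $E_B$ repairs this. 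Indeed the implication you are proving, $\neg(1)\Rightarrow\neg(2)$, is false in the stated generality: take $A=B=M=\mathbb{B}(\ell^2)$ with the canonical trace, so $E_B=\mathrm{id}$; then $(1)$ fails (every non-zero projection of $A$ has central support $1$, which is infinite in $A$, so Definition \ref{em def} can never be satisfied), while $(2)$ and $(3)$ hold, since with $a=b=1$ a sequence of unitaries can neither tend to $0$ strongly nor have $\|w_n\|_{2,\mathrm{Tr}}\to0$ (the $2$-norm is infinite). So the missing ingredient is a finiteness hypothesis: in \cite{CH} and \cite{HR} one assumes $A\subset pMp$ with $\mathrm{Tr}(p)<\infty$ (which is also the only regime in which the present paper invokes the theorem, cf.\ Remark \ref{bi ex fin}); then every element of $1_AM1_B$ is automatically square-integrable, the quantifiers in $(2)$ and $(3)$ collapse to the case you handled, and your argument closes. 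As written, however, the final extension step is not merely unproved but unprovable, so the proof as submitted does not establish the statement in the form given.
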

We use the same symbol $A\preceq_{M}B$ if Theorem $\ref{em thm2}$ holds for semifinite $B\subset M$. Note that these definition does not depend on the choice of traces.

Ozawa used only the implication $(4)\Rightarrow(1)$ in Theorem $\ref{em thm}$ to prove his theorem on bi-exactness. So we will use only this implication for the main theorem, since our proof is very similar to his original proof. (For this reason, we used only Asher's observation $\cite[\rm Proposition\ 3.2]{As}$ in the first version of this paper.)

Finally, we recall a useful technique which allows us to see only abelian subalgebras $\cite[\rm Corollary\ F.14]{BO}$. The statement here is slightly general, but the proof is almost identical.

\begin{Pro}\label{cor F}
Let $M$ be a semifinite von Neumann algebra with a faithful semifinal normal weight $\omega$ and with separable predual. Let $A$ and $B_{n}$ $(n\in\mathbb{N})$ be (possibly non-unital) von Neumann subalgebras in $M$ such that $\omega(1_{A})<\infty$,  $1_{B_{n}}=1_{B_{m}}$ $(n,m\in \mathbb{N})$, and $\omega\mid_{B_{n}}:=\mathrm{Tr}_{n}$ $(n\in\mathbb{N})$ is semifinite and tracial.  
Assume one of the following conditions:
\begin{itemize}
\item (semifinite setting) each $B_{n}$ is a unital subalgebra of $M$ and $\omega$ is tracial on $M$;
\item (finite setting) $\omega(1_{M})=1$.
\end{itemize}
If $A$ does not embed in $B_{n}$ inside $M$ for every $n\in\mathbb{N}$, then there exists a unital abelian von Neumann subalgebra $A_{0}\subset A$ such that $A_{0}$ does not embed in $B_{n}$ inside $M$ for every $n\in\mathbb{N}$.
\end{Pro}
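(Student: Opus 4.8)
The plan is to mimic the proof of \cite[Corollary F.14]{BO}, the new points being the passage from a single algebra to the countable family $(B_n)_n$ and the non-finiteness of $M$. Throughout I would use the characterization of non-embedding by sequences of unitaries: by Theorems \ref{em thm} and \ref{em thm2} (condition $(3)$), for a von Neumann subalgebra $C\subseteq A$ one has $C\not\preceq_M B_n$ if and only if there is a net -- a sequence, since $M$ has separable predual -- of unitaries $(w_k)_k$ of $C$ with $\|E_{B_n}(b^\ast w_k a)\|_{2,\mathrm{Tr}_n}\to 0$ for all $a,b\in 1_C M 1_{B_n}$; a routine $\|\cdot\|_2$-approximation shows it suffices to test this on a fixed bounded $\|\cdot\|_2$-dense subset of $1_C M 1_{B_n}$. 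Two elementary reductions set the stage. First, applying the remark after Definition \ref{em def} contrapositively ($pAp\preceq_M B\Rightarrow A\preceq_M B$), from $A\not\preceq_M B_n$ I get $pAp\not\preceq_M B_n$ for every nonzero projection $p\in A$ and every $n$. Second, in the semifinite setting I would use Theorem \ref{em thm2}$(1)$ to replace each $B_n$ by the countable family of its finite corners $qB_nq$ (with $\mathrm{Tr}_n(q)<\infty$, $q$ ranging over a cofinal family of projections of $B_n$); after this enlargement of the index set one may, and I do, assume every $B_n$ is a finite von Neumann algebra, so that Theorem \ref{em thm} and in particular its condition $(4)$ are available.

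\textit{Combining step.} I claim: for every finite $S\subseteq\mathbb{N}$, every finite partition of $1_A$ into projections $p_1,\dots,p_r$ of $A$, every finite set $F$ of elements of the corners $1_AM1_{B_n}$, and every $\varepsilon>0$, there is a unitary $v\in A$ of finite spectrum with $v=\sum_j p_j v p_j$ (so $v$ commutes with each $p_j$) such that $\|E_{B_n}(b^\ast v a)\|_2<\varepsilon$ for all $n\in S$ and $a,b\in F$. To prove this I would form the finite direct sum $\widetilde M:=\bigoplus_{(j,n)\in\{1,\dots,r\}\times S}M$ with the direct-sum tracial weight, let $\widetilde B:=\bigoplus_{(j,n)}B_n$ with $B_n$ placed in the $(j,n)$-th summand, and let $\mathcal A\subseteq\widetilde M$ be the copy of $\bigoplus_j p_jAp_j$ whose $(j,n)$-th component is $p_jAp_j$ and does not depend on $n$. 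Since $1_{\mathcal A}L^2(\widetilde M)1_{\widetilde B}$ decomposes along the central projections of $\widetilde B$ as $\bigoplus_{(j,n)}p_jL^2(M)1_{B_n}$, a routine check using condition $(4)$ of Theorem \ref{em thm} shows $\mathcal A\preceq_{\widetilde M}\widetilde B$ if and only if $p_jAp_j\preceq_M B_n$ for some pair $(j,n)$; by the first reduction this fails, so $\mathcal A\not\preceq_{\widetilde M}\widetilde B$. Applying condition $(3)$ to $\widetilde M$ and reading off the diagonal components produces, for each $j$, a single unitary $w_j\in\mathcal U(p_jAp_j)$ -- valid simultaneously for all $n\in S$ -- with $\|E_{B_n}((p_jb)^\ast w_j(p_ja))\|_2<\varepsilon/r$ for $a,b\in F$; then $v:=\sum_j p_j w_j p_j$ works by the triangle inequality over $j$, and a small perturbation makes the spectrum finite.

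\textit{Construction and conclusion.} For each $n$ fix a bounded $\|\cdot\|_2$-dense sequence $D_n\subseteq 1_AM1_{B_n}$ (possible since $M$ has separable predual). I would build an increasing chain $\mathbb C 1_A=A_0^{(0)}\subseteq A_0^{(1)}\subseteq\cdots$ of finite-dimensional abelian subalgebras of $A$, all with unit $1_A$: given $A_0^{(m-1)}$ with atoms $p_1,\dots,p_r$, apply the combining step with $S=\{1,\dots,m\}$, this partition, the first $m$ elements of each $D_n$ ($n\le m$), and $\varepsilon=1/m$, to obtain a finite-spectrum unitary $v_m=\sum_j p_j w_j p_j$ commuting with $A_0^{(m-1)}$ and satisfying $\|E_{B_n}(b^\ast v_m a)\|_2<1/m$ for all these data; set $A_0^{(m)}:=(A_0^{(m-1)}\cup\{v_m\})''$, again finite-dimensional abelian with unit $1_A$. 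Let $A_0:=(\bigcup_m A_0^{(m)})''\subseteq A$, a unital abelian von Neumann subalgebra. For fixed $n$ and fixed $a,b\in D_n$ the triple is handled at every sufficiently large stage $m$, so $\|E_{B_n}(b^\ast v_m a)\|_2<1/m\to 0$; by density this holds for all $a,b\in 1_AM1_{B_n}=1_{A_0}M1_{B_n}$. Hence $(v_m)_m$ is a sequence of unitaries of $A_0$ witnessing, through condition $(3)$, that $A_0\not\preceq_M B_n$ for every $n$; unravelling the semifinite reduction of the first paragraph recovers the assertion for the original $B_n$, and the proof is complete.

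I expect the main obstacle to be the abelianization: non-embedding is not inherited by subalgebras, so one cannot simply cut $A$ down, and the whole construction hinges on the fact -- used above -- that non-embedding \emph{is} inherited by corners $pAp$, which is exactly what permits enlarging a finite-dimensional abelian subalgebra inside its relative commutant while keeping all the moments $\|E_{B_n}(b^\ast(\cdot)a)\|_2$ small. Handling the countably many $B_n$ simultaneously is the second difficulty: a naive interchange of an infimum over unitaries with a maximum over $n$ would be false, and it is condition $(4)$ of Theorem \ref{em thm}, applied to the $\ell^\infty$-amplification, that supplies the genuine structural input; the semifinite-to-finite reduction and the various non-unital bookkeeping also require some care but are standard.
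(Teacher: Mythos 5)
Your overall skeleton (inductive construction of finite-dimensional abelian subalgebras with unit $1_A$, using corner-hereditariness of non-embedding and the unitary criterion, then a density argument) is the same as the paper's, which simply invokes the construction of \cite[Corollary F.14]{BO}. The genuinely different device is your combining step: the paper never needs one unitary working for several $B_n$ simultaneously, because it first re-indexes the sequence so that each $B_k$ appears infinitely often and then treats a \emph{single} $B_n$ at each stage; your finite direct-sum amplification together with condition $(4)$ of Theorem \ref{em thm} is a correct (if heavier) substitute in the finite setting, but it is exactly what forces your problematic preliminary reduction in the semifinite setting. That reduction is the first real gap: you replace each $B_n$ by "a cofinal family" of finite corners $qB_nq$, prove non-embedding of $A_0$ into those corners, and claim that "unravelling" recovers $A_0\not\preceq_M B_n$. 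But non-embedding in the semifinite sense is, by Theorem \ref{em thm2}(1), a statement about \emph{all} finite-trace projections $q\in B_n$, and a countable family of finite-trace projections that dominates every other one (even up to $\precsim$) need not exist: in $L^{\infty}[0,1]\mathbin{\bar{\otimes}}\mathbb{B}(\ell^{2})$ with the natural trace, for any sequence of finite-trace projections one can build a finite-trace projection whose central rank function exceeds each of theirs on a set of positive measure. So the step from "non-embedding into the chosen corners" to "non-embedding into $qB_nq$ for every finite-trace $q$" requires an additional lemma (e.g. that $A_0\preceq qB_nq$ for some finite-trace $q$ forces $A_0\preceq q_kB_nq_k$ for some member of a fixed exhausting sequence, via a bimodule-dimension computation), or is avoided altogether by running the construction against the unitary criterion \ref{em thm2}(3) for $B_n$ itself, which is what the paper does.

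The second weak point is the approximation you dismiss as "a routine $\|\cdot\|_2$-approximation": this is where the hypotheses of the proposition actually do their work, and it is essentially the whole written content of the paper's proof. In the semifinite setting one needs $\omega(1_A)<\infty$ so that $\omega(1_A\cdot 1_A)$ is a bounded normal functional (giving $\|1_A(x_i-a)\|_{2,\omega}\to 0$ along a Kaplansky-dense sequence) and traciality of $\omega$ on $M$ to control the estimate; in the finite setting $\omega$ is \emph{not} a trace on $M$, so the left test vector $b$ cannot be approximated by the same one-sided estimate, and the paper has to flip sides using $\|E_{B_n}(y)\|_{2,\mathrm{Tr}_n}=\|E_{B_n}(y^{\ast})\|_{2,\mathrm{Tr}_n}$, which is where traciality of $\mathrm{Tr}_n$ on $B_n$ enters. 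A generic "test on a bounded $\|\cdot\|_2$-dense subset" argument, without these two points spelled out, does not go through as stated in the finite non-tracial case. Neither issue is fatal to your plan, but as written both are gaps rather than routine bookkeeping.
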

\begin{proof}
We may assume each $B_{k}$ appears infinitely often in $(B_{n})_{n}$. Let $(x_{n})_{n}$ be a $\sigma$-$\ast$strongly dense sequence in the closed unit ball of $M$. 
Then, by the completely same manner as in $\cite[\rm Corollary\ F.14]{BO}$, we can construct an increasing sequence $A_{n}\subset A$ of finite dimensional abelian unital von Neumann subalgebras and unitaries $w_{n}\in A_{n}$ satisfying $\|E_{B_{n}}(x_{j}^{\ast}w_{n}x_{i})\|_{2,\mathrm{Tr}_{n}}\leq n^{-1}$ for any $n\in\mathbb{N}$ and $i,j\leq n$.

We assume the first condition (and write $\mathrm{Tr}:=\omega$). Take any element $a$ in the closed unit ball of $M$ and any $\epsilon>0$. Find a subsequence $(x_{i})$ such that $x_{i}\rightarrow a$ in the $\sigma$-$\ast$strong topology. Since $(x_{i}-a)(x_{i}-a)^{\ast}$ converges zero $\sigma$-weakly and $\mathrm{Tr}(1_{A}\cdot 1_{A})$ is bounded normal, we have 
\begin{equation*}
\|1_{A}x_{i}-1_{A}a\|_{2,\mathrm{Tr}}^{2}=\mathrm{Tr}(1_{A}(x_{i}-a)(x_{i}-a)^{\ast}1_{A})\longrightarrow 0.
\end{equation*}
Hence, we can find $x_{i}$ satisfying $\|1_{A}x_{i}-1_{A}a\|_{2,\mathrm{Tr}}<\epsilon$.
 Now, we have the following inequality for any $j$ and large $n$:
\begin{eqnarray*}
\|E_{B_{n}}(x_{j}^{\ast}w_{n}a)\|_{2,\mathrm{Tr}}
&\leq&\|E_{B_{n}}(x_{j}^{\ast}w_{n}x_{i})\|_{2,\mathrm{Tr}}+\|E_{B_{n}}(x_{j}^{\ast}w_{n}(x_{i}-a))\|_{2,\mathrm{Tr}}\\
&\leq& n^{-1}+\|x_{j}^{\ast}w_{n}(x_{i}-a)\|_{2,\mathrm{Tr}}\\
&\leq& n^{-1}+\|x_{j}^{\ast}w_{n}\|\|1_{A}x_{i}-1_{A}a\|_{2,\mathrm{Tr}}\\
&<& n^{-1}+\ \epsilon.
\end{eqnarray*}
This means $\|E_{B_{n}}(x_{j}^{\ast}w_{n}a)\|_{2,\mathrm{Tr}}\leq n^{-1}$, and we can also replace $x_{j}$ by any $b\in 1_{A}M$. This completes the first case.

Now, we treat the second case. Take any $a\in M$ and $\epsilon>0$ and find $x_{i}\in M$ such that $\|x_{i}-a\|_{2,\omega}<\epsilon$. Then, for any $j$ and large $n$, we have 
\begin{eqnarray*}
\|E_{B_{n}}(x_{j}^{\ast}w_{n}a)\|_{2,\mathrm{Tr}_{n}}
&=&\|E_{B_{n}}(x_{j}^{\ast}w_{n}x_{i})\|_{2,\mathrm{Tr}_{n}}+\|E_{B_{n}}(x_{j}^{\ast}w_{n}(x_{i}-a))\|_{2,\mathrm{Tr}_{n}}\\
&\leq& n^{-1}+\|x_{j}^{\ast}w_{n}(x_{i}-a)\|_{2,\omega}\\
&\leq& n^{-1}+\|x_{j}^{\ast}w_{n}\|\|x_{i}-a\|_{2,\omega}\\
&<& n^{-1}+\ \epsilon.
\end{eqnarray*}
To replace $x_{j}$ by any element $b\in M$, use the equality $\|E_{B_{n}}(y)\|_{2,\mathrm{Tr}_{n}}=\|E_{B_{n}}(y^{\ast})\|_{2,\mathrm{Tr}_{n}}$ which comes from the trace condition on $B_{n}$.
\end{proof}

\subsection{\bf Bi-exactness and main theorem}

After showing solidity of some group von Neumann algebras, Ozawa and Popa generalized this results to tensor products, crossed products, and free products. Ozawa's theorem on bi-exactness includes most of these results. In this subsection, we recall bi-exactness on groups and prove the main theorem in this section which generalizes Ozawa's theorem.

Let $\Gamma$ be a discrete group and ${\cal G}$ be a family of subgroups of $\Gamma$. We say a subset $\Omega \subset \Gamma$ is $\it small$ $\it relative$ $\it to$ ${\cal G}$ if there exists $n\in\mathbb{N}$, $\Lambda_{i}\in {\cal G}$ and $s_{i}, t_{i}\in\Gamma$ $(i=1,2,\ldots,n)$ such that $\Omega\subset\bigcup_{i}s_{i}\Lambda t_{i}$.
We let $c_{0}(\Gamma;{\cal G})\subset \ell^{\infty}(\Gamma)$ be the $C^{\ast}$-subalgebra generated by functions whose support are small relative to ${\cal G}$. Note that $\Gamma$ has a left (respectively, right) action on $\ell^{\infty}(\Gamma)$/$c_{0}(\Gamma;{\cal G})$ induced from the left (respectively, right) translation. 

Now, we define bi-exactness of groups. The definition here is different from original one (see $\cite[\rm Definition\ 15.1.2]{BO}$), but they give the same condition $\cite[\rm Proposition\ 15.2.3]{BO}$.
\begin{Def}\label{bi-ex def}\upshape
For above $\Gamma$ and ${\cal G}$, we say $\Gamma$ is $\it bi$-$\it exact$ $\it relative$ $\it to$ ${\cal G}$ if the left-right action of $\Gamma\times\Gamma$ on $\ell^{\infty}(\Gamma)/c_{0}(\Gamma;{\cal G})$ is amenable. We simply say $\Gamma$ is $\it bi$-$\it exact$ (or $\Gamma$ is in $\it class$ $\cal S$) if ${\cal G}$ consists of only the trivial subgroup.
\end{Def}

Examples of bi-exact groups are all hyperbolic groups $\cite{HG}$ (in particular, all finitely generated free groups $\cite{AO}$) and many relatively hyperbolic groups $\cite{Oz rel hyp}$; lattices of simple rank one Lie groups $\cite{Sk}$; $\mathbb{Z}^2\rtimes \mathrm{SL}(2,\mathbb{Z})$ $\cite{Oz 09}$; wreath product of bi-exact groups on amenable groups $\cite{solid 3}$; subgroups of bi-exact groups. Moreover it is known that bi-exactness is ME invariant $\cite{Sa 09}$.

Next, we prepare a key lemma for our main theorem. Let $A$ be a semifinite von Neumann algebra with a faithful semifinite normal tracial weight Tr and $\Gamma$ be a discrete group acting on $A$. Denote this $\Gamma$-action by $\alpha$. 
Let $\pi,\lambda,\rho,J_{A},J$, and $v_{g}$ be as in Subsection $\ref{cros}$ and put $B:=A\rtimes_{r}\Gamma$ and $C:=JBJ$. We note that $C$ is a $\sigma$-weakly dense $C^{\ast}$-subalgebra of $B'$ and
\begin{eqnarray*}
C=JBJ&=&C^{\ast}\{J_{A}aJ_{A}\otimes 1,v_{s}\otimes \rho_{s}\mid a\in A,s\in\Gamma\}\\
&\simeq&C^*\{J_{A}aJ_{A}\otimes 1,v_{s}\otimes \lambda_{s}\mid a\in A,s\in\Gamma\}\\
&\simeq&(J_{A}AJ_{A})\rtimes_{r}\Gamma,
\end{eqnarray*}
where the second isomorphism is given by the unitary $\xi\otimes \delta_g \longmapsto \xi\otimes \delta_{g^{-1}}$ and the $\Gamma$-action on $J_{A}AJ_{A}$ is given by $J_{A}aJ_{A}\mapsto J_{A}\alpha_{g}(a)J_{A}$ for $a\in A$ and $g\in \Gamma$. 

Finally, assume $\Gamma$ is bi-exact relative to ${\cal G}$ and let $\mathbb{K}(A,\Gamma;{\cal G})$ be the hereditary $C^{\ast}$-subalgebra generated by $1\otimes c_{0}(\Gamma;{\cal G})$ (i.e. the norm closure of $(1\otimes c_{0}(\Gamma;{\cal G}))\mathbb{B}(L^{2}(A)\otimes\ell^{2}(\Gamma))(1\otimes c_{0}(\Gamma;{\cal G}))$). We set a closed ideal $I:=\mathbb{K}(A,\Gamma;{\cal G})\cap C^{\ast}(B,C) \subset C^{\ast}(B,C).$

Here we give a generalization of $\cite[\rm Proposition\ 4.2]{solid 3}$. The proof here is identical to that of $\cite[\rm Proposition\ 24]{Sako}$, but we include the proof for self-containment.
\begin{Pro}\label{bi-ex cross}
In the setting above,
\begin{eqnarray*}
\nu \colon B\odot C \longrightarrow \frac{C^{\ast}(B,C)}{I} \ ;\ 
b\otimes c \longmapsto bc + I
\end{eqnarray*}
is min-continuous. 
\end{Pro}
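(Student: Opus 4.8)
The plan is to adapt the argument of \cite[Proposition 24]{Sako} (itself a transcription of Ozawa's treatment of bi-exactness in \cite{BO}, Chapter 15) to the present crossed-product setting. The one substantial input is amenability of the left--right action of $\Gamma\times\Gamma$ on $Y:=\mathrm{Spec}\bigl(\ell^{\infty}(\Gamma)/c_{0}(\Gamma;{\cal G})\bigr)$; note that exactness of $B$ itself is \emph{not} available, since its coefficient algebra $A$ is an arbitrary von Neumann algebra, so the amenable action must do that work. Throughout I write $H:=L^{2}(A)$ and keep the realizations $B\subset\mathbb{B}(H\otimes\ell^{2}(\Gamma))$, $C=JBJ\subset B'$ fixed in the preamble.

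First I would make the ambient picture explicit. A direct computation with $\pi(a)=\sum_{g}\alpha_{g^{-1}}(a)\otimes e_{g,g}$ shows that $\pi(A)\subset\mathbb{B}(H)\mathbin{\bar{\otimes}}\ell^{\infty}(\Gamma)$, that $1\otimes\lambda_{s}$ implements $\alpha_{s}$ on $\pi(A)$ and left translation on $1\otimes\ell^{\infty}(\Gamma)$, and that $v_{s}\otimes\rho_{s}$ commutes with $\pi(A)$, implements right translation on $1\otimes\ell^{\infty}(\Gamma)$, and implements $\mathrm{Ad}(J_{A}v_{s}J_{A})$ on $J_{A}AJ_{A}\otimes 1\subset\mathbb{B}(H)\otimes 1$. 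Since one-sided translates of sets small relative to ${\cal G}$ are again small, $1\otimes c_{0}(\Gamma;{\cal G})$ is globally $\Gamma\times\Gamma$-invariant; consequently $I=\mathbb{K}(A,\Gamma;{\cal G})\cap C^{\ast}(B,C)$ is a genuine closed ideal of $C^{\ast}(B,C)$, and $C^{\ast}(B,C)/I$ identifies with a $C^{\ast}$-subalgebra of $N:=\bigl(\mathbb{B}(H)\mathbin{\bar{\otimes}}C(Y)\bigr)\rtimes_{r}(\Gamma\times\Gamma)$, with $B$ mapping into the $\Gamma\times\{e\}$-crossed product and $C$ into the $\{e\}\times\Gamma$-crossed product.

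Now I would bring in amenability. Take a net $\mu_{i}\colon Y\to\mathrm{Prob}(\Gamma\times\Gamma)$ of finitely supported continuous maps with $\sup_{y}\|(s,t)\cdot\mu_{i}^{y}-\mu_{i}^{(s,t)\cdot y}\|_{1}\to 0$ for each $(s,t)$, lift it to $\widehat{\mu}_{i}\in\ell^{\infty}\bigl(\Gamma,\ell^{1}(\Gamma\times\Gamma)\bigr)$, put $\xi_{i}^{g}:=(\widehat{\mu}_{i}^{\,g})^{1/2}$, and form the isometries
\begin{equation*}
V_{i}\colon H\otimes\ell^{2}(\Gamma)\longrightarrow H\otimes\ell^{2}(\Gamma)\otimes\ell^{2}(\Gamma\times\Gamma),\qquad V_{i}(\zeta\otimes\delta_{g})=\zeta\otimes\delta_{g}\otimes\xi_{i}^{g},
\end{equation*}
with u.c.p. maps $\Phi_{i}:=V_{i}^{\ast}(\,\cdot\,)V_{i}$. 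On the enlarged space one has the commuting ``freed'' generators obtained by pairing $1\otimes\lambda_{s}$ and $v_{s}\otimes\rho_{s}$ with the regular representation of $\Gamma\times\Gamma$ on the new $\ell^{2}(\Gamma\times\Gamma)$-coordinate; the asymptotic equivariance of $\mu_{i}$ is precisely what makes $\Phi_{i}$ carry these freed generators back to $1\otimes\lambda_{s}$, $v_{s}\otimes\rho_{s}$ up to errors that vanish modulo $I$, while fixing $\pi(A)$ and $J_{A}AJ_{A}\otimes 1$ up to norm-zero errors. Since the freed left generators sit in the $\Gamma\times\{e\}$-coordinate and the freed right generators in the $\{e\}\times\Gamma$-coordinate, they generate commuting $C^{\ast}$-subalgebras $\widetilde{B},\widetilde{C}$ of $N$, and the $\Gamma\times\Gamma$-action on the coefficient algebra $\mathbb{B}(H)\mathbin{\bar{\otimes}}C(Y)$ is amenable (it lies over the amenable action on the central subalgebra $C(Y)$), so $N$ is nuclear over $\mathbb{B}(H)\mathbin{\bar{\otimes}}C(Y)$ and, via the comultiplication-type inclusion $C^{\ast}_{\lambda}(\Gamma)\hookrightarrow C^{\ast}_{\lambda}(\Gamma\times\Gamma)=C^{\ast}_{\lambda}(\Gamma)\otimes C^{\ast}_{\lambda}(\Gamma)$, the product map $\widetilde{b}\otimes\widetilde{c}\mapsto\widetilde{b}\widetilde{c}$ is min-bounded on $\widetilde{B}\odot\widetilde{C}$. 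Chaining this with the approximation $\|\sum_{j}b_{j}c_{j}+I\|\le\liminf_{i}\|\Phi_{i}(\text{freed product})\|$ yields $\|\sum_{j}b_{j}c_{j}+I\|\le\|\sum_{j}b_{j}\otimes c_{j}\|_{B\otimes_{\min}C}$, i.e. $\nu$ is min-continuous.

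I expect the main difficulty to be bookkeeping rather than conceptual. One has to check that every error produced by the $\Phi_{i}$, and every commutator of $1\otimes\lambda_{s}$ or $v_{s}\otimes\rho_{s}$ with $\pi(A)$ and with $J_{A}AJ_{A}\otimes 1$, lies in $\mathbb{K}(A,\Gamma;{\cal G})$ \emph{and} inside $C^{\ast}(B,C)$, so that it dies in the quotient $C^{\ast}(B,C)/I$; this forces one to carry the ``small relative to ${\cal G}$'' condition through the lift $\widehat{\mu}_{i}$ and through all these commutators. One must also verify that expanding the coefficients to $\mathbb{B}(H)\mathbin{\bar{\otimes}}C(Y)$ does not spoil the commuting-square structure, i.e. that the images of $B$ and $C$ remain in commuting position inside $N$. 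These verifications are routine but somewhat lengthy, which is why the full argument is reproduced here.
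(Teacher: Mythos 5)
There is a genuine gap, and it is exactly at the point you wave away in your opening remark: you treat $A$ as ``an arbitrary von Neumann algebra'' and expect amenability of the left--right action to do all the work, but the amenable boundary action only tames the group directions ($1\otimes\lambda_{s}$ and $v_{t}\otimes\rho_{t}$); it says nothing about the coefficient algebras. Note that $\pi(A)\odot (J\pi(A)J)$ sits inside $B\odot C$, and both algebras are block-diagonal with respect to the $\ell^{2}(\Gamma)$-decomposition, so passing to the quotient by $I$ (a hereditary algebra built from $1\otimes c_{0}(\Gamma;{\cal G})$) does not lower the norm of $\sum_{k}\pi(a_{k})J\pi(b_{k})J$: one is left with essentially the norm of $\sum_{k}a_{k}J_{A}b_{k}J_{A}$ in $\mathbb{B}(L^{2}(A))$. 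Hence min-continuity of $\nu$ already forces min-continuity of $A\odot A'\to\mathbb{B}(L^{2}(A))$, i.e.\ semidiscreteness (injectivity) of $A$; for a non-injective $A$ with trivial $\Gamma$-action the statement you set out to prove is false. In your write-up this is hidden in the step ``$N$ is nuclear over $\mathbb{B}(H)\mathbin{\bar{\otimes}}C(Y)$, so the product map is min-bounded on $\widetilde{B}\odot\widetilde{C}$'': $\widetilde{B}$ and $\widetilde{C}$ share the non-nuclear corner $\mathbb{B}(H)$ containing the commuting copies $\pi(A)$ and $J_{A}AJ_{A}\otimes1$, and nothing in your argument controls their joint multiplication. (The claimed embedding of $C^{\ast}(B,C)/I$ into the reduced crossed product $N$ is also unjustified; quotients map \emph{out of} full crossed products, they do not embed into reduced ones for free.)

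For comparison, the paper's proof uses injectivity of $A$ precisely here: semidiscreteness gives min-boundedness of $\pi(A)\otimes J\pi(A)J\ni a\otimes JbJ\mapsto aJbJ\in D+I$, this map is $(\Gamma\times\Gamma)$-equivariant, so it extends to reduced crossed products, yielding $B\otimes C\simeq(\pi(A)\otimes J\pi(A)J)\rtimes_{r}(\Gamma\times\Gamma)\to E\rtimes_{r}(\Gamma\times\Gamma)$; amenability of the action on $1\otimes(\ell^{\infty}(\Gamma)/c_{0}(\Gamma;{\cal G}))$, which is central in $E$, gives $E\rtimes_{r}(\Gamma\times\Gamma)=E\rtimes(\Gamma\times\Gamma)$, and universality of the full crossed product applied to the covariant pair $(E\hookrightarrow E_{1},\ (s,t)\mapsto(1\otimes\lambda_{s})(v_{t}\otimes\rho_{t}))$ lands everything in $E_{1}\supset C^{\ast}(B,C)/I$. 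So no approximate-mean isometries $V_{i}$ or error bookkeeping are needed; the two essential inputs are (a) injectivity of $A$ for the coefficient multiplication and (b) amenability to replace the reduced crossed product by the full one. Your proposal supplies a version of (b) but omits (a), and (a) cannot be recovered from the amenable action.
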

\begin{proof}
We first put $D:=C^{\ast}(\pi(A),J_{A}AJ_{A}\otimes1, 1\otimes \ell^{\infty}(\Gamma))\subset D_{1}:=C^{\ast}(B,C,1\otimes \ell^{\infty}(\Gamma))$ and $E:=(D+I)/I\ \subset E_{1}:=(D_{1}+I)/I\ \supset C^{\ast}(B,C)/I$. Then, define two $\Gamma$-actions on $E$ by
\begin{eqnarray*}
\left\{
\begin{array}{l}
\Gamma \longrightarrow \mathrm{Aut}(E)\ ;\ g \longmapsto \mathrm{Ad}(1\otimes \lambda_{g})\\
\Gamma \longrightarrow \mathrm{Aut}(E)\ ;\ g \longmapsto \mathrm{Ad}(v_{g}\otimes \rho_{g})
\end{array}
\right.
\end{eqnarray*}
and construct the $(\Gamma\times\Gamma)$-action on $E$ by them.

The inclusion $\ell^{\infty}(\Gamma)\simeq1\otimes\ell^{\infty}(\Gamma) \subset D+I$ induces the inclusion $\ell^{\infty}(\Gamma)/c_{0}(\Gamma;{\cal G})\simeq1\otimes(\ell^{\infty}(\Gamma)/c_{0}(\Gamma;{\cal G}))\subset E$ which is $(\Gamma\times\Gamma)$-equivariant. 
Since $1\otimes(\ell^{\infty}(\Gamma)/c_{0}(\Gamma;{\cal G}))$ is contained in the center of $E$ and the $(\Gamma\times\Gamma)$-action is amenable on $1\otimes(\ell^{\infty}(\Gamma)/c_{0}(\Gamma;{\cal G}))$ by assumption, this action is amenable on $E$. 
Hence, we have $E\rtimes_{r}(\Gamma\times\Gamma)=E\rtimes(\Gamma\times\Gamma)$. 
Since the inclusion map $E \hookrightarrow E_{1}$ and the unitary representation $\Gamma\times\Gamma \rightarrow E_{1}\ ;\ (s,t)\mapsto (1\otimes \lambda_{s})(v_{t}\otimes\rho_{t})$ are a covariant representation, we have a $\ast$-homomorphism
\begin{equation*}
\phi \colon E\rtimes_{r}(\Gamma\times\Gamma)=E\rtimes(\Gamma\times\Gamma) \longrightarrow E_{1}
\end{equation*}
by universality.

On the other hand, injectivity of $A$ implies min-boundedness of
\begin{equation*}
\pi(A)\otimes J\pi(A)J \ni a\otimes JbJ\longmapsto aJbJ \in D+I\subset\mathbb{B}(L^{2}(A)\otimes \ell^{2}(\Gamma)),
\end{equation*}
since this map is a restriction of the bounded map $\pi(A)\otimes \pi(A)'\rightarrow \mathbb{B}(L^{2}(A)\otimes \ell^{2}(\Gamma))$. Compose this map with the quotient map $D+I\ \rightarrow E$ and notice the resulting map is $(\Gamma\times\Gamma)$-equivariant, where the $(\Gamma\times\Gamma)$-action on $\pi(A)\otimes J\pi(A)J$ is given by $(s,t)\cdot(a\otimes JbJ):=\alpha_{s}(a)\otimes J\alpha_{t}(b)J$. Then, we have the extended $\ast$-homomorphism
\begin{equation*}
(\pi(A)\otimes J\pi(A)J)\rtimes_{r}(\Gamma\times\Gamma) \longrightarrow E\rtimes_{r}(\Gamma\times\Gamma).
\end{equation*}
Composing this map with $\phi$ and using an identification 
\begin{equation*}
(\pi(A)\otimes J\pi(A)J)\rtimes_{r}(\Gamma\times\Gamma)\simeq (\pi(A)\rtimes_{r}\Gamma)\otimes (J\pi(A)J\rtimes_{r}\Gamma)\simeq B\otimes C,
\end{equation*}
we are done.
\end{proof}

Before the main theorem, we consider the relationship between injective von Neumann algebras and these commutants. Let $N\subset\mathbb{B}(H)$ be an injective von Neumann algebra which has separable predual. Since it is AFD by Connes' theorem $\cite{Co}$, there exists an increasing sequence  $(N_{n})$ of finite dimensional unital $C^{\ast}$-subalgebras of $N$ with $\displaystyle N=(\bigcup_{n} N_{n})''$. Define a u.c.p. map on $\mathbb{B}(H)$ by
\begin{equation*}
\Psi_{N}(x):=\lim_{n\rightarrow \omega}\int_{{\cal U}(N_{n})}uxu^{\ast}du,
\end{equation*}
where $du$ is the normalized Haar measure on ${\cal U}(N_{n})$, $\omega$ is a fixed free ultra filter, and the limit is taken by the $\sigma$-weak topology. It is easily checked that $\Psi_{N}$ is a proper conditional expectation from $\mathbb{B}(H)$ onto $N'$. Here $\it proper$ means that $\Psi_{N}$ satisfies
\begin{equation*}
\Psi_{N}(x)\in \overline{\rm{co}}\{uxu^{\ast}\mid u\in {\cal U}(N)\}
\end{equation*}
for any $x\in\mathbb{B}(H)$, where the closure is taken by the $\sigma$-weak topology.

Now, we are in position to prove our main theorem which generalizes $\cite[\rm Theorem\ 15.1.5]{BO}$.
\begin{Thm}\label{bi ex thm}
Let $A$ be an injective semifinite von Neumann algebra with a faithful semifinite normal tracial weight $\mathrm{Tr}$, $\Gamma$ be a discrete group acting on $A$, and ${\cal G}$ be a family of subgroups of $\Gamma$. Put $M:=A\mathbin{\bar{\rtimes}}\Gamma$ and let $N\subset M$ be an injective finite (possibly non-unital) von Neumann subalgebra with separable predual. Assume the following conditions:
\begin{itemize}
\item $\Gamma$ is bi-exact relative to ${\cal G}$;
\item for any $\Lambda\in{\cal G}$, the induced $\Lambda$-action on $A$ is $\mathrm{Tr}$-preserving (so that $A\mathbin{\bar{\rtimes}}\Lambda$ is semifinite);
\item there exists a projection $p\in A$ such that $1_{N}\leq p$ and $\mathrm{Tr}(\alpha_{s}(p))<\infty$ for any $s\in\Gamma$;
\item there exists a faithful normal conditional expectation $E_{N}$ from $1_{N}M1_{N}$ onto $N$.
\end{itemize}
Then, either one of the following conditions holds.
\begin{itemize}
	\item[$(\rm{i})$]The relative commutant $N'\cap 1_{N}M1_{N}$ is injective.
	\item[$(\rm{ii})$]There exists $\Lambda \in {\cal G}$ and a projection $q\in A$ with $\mathrm{Tr}(q)<\infty$ such that $N\preceq_{eMe}q(A\mathbin{\bar{\rtimes}}\Lambda) q$, where $e:=1_{N}\vee q$.
\end{itemize}
\end{Thm}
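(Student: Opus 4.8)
The plan is to transplant Ozawa's proof of \cite[Theorem 15.1.5]{BO} to the present setting, feeding in Proposition \ref{bi-ex cross} as the ``condition (AO)'' input and Lemma \ref{wk ex ao} --- hence our weak exactness in place of local reflexivity --- as the bridge from min-continuity to injectivity. Write $B:=A\rtimes_{r}\Gamma\subset M$ and $C:=JBJ\subset M'$, and recall $C\cong(J_{A}AJ_{A})\rtimes_{r}\Gamma$. Two preliminary points must be settled. First, after the usual separability reduction one may assume $M$ is $\sigma$-finite and all algebras in sight have separable predual; here the hypothesis $\mathrm{Tr}(\alpha_{s}(p))<\infty$ is precisely what keeps the relevant corners of $A$ (and the bimodules $1_{N}L^{2}(M)1_{A\mathbin{\bar{\rtimes}}\Lambda}$) of finite trace, so that Popa's criterion is applicable. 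Second, $C$ is weakly exact in $M'$: since $A$ is injective, $J_{A}AJ_{A}$ is semidiscrete, hence weakly exact; bi-exactness of $\Gamma$ relative to $\mathcal{G}$ forces exactness of $\Gamma$; so Proposition \ref{cro pro} applied to $(J_{A}AJ_{A})\rtimes_{r}\Gamma$, represented on $L^{2}(A)\otimes\ell^{2}(\Gamma)$, gives $C$ weakly exact in $\mathbb{B}(L^{2}(A)\otimes\ell^{2}(\Gamma))$, hence in $M'$.

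I would then suppose that (ii) fails and prove (i). Put $P:=N'\cap 1_{N}M1_{N}$. Using injectivity of $N$, fix the proper conditional expectation $\Psi_{N}\colon\mathbb{B}(H)\to N'$ built from an AFD exhaustion $(N_{n})$ of $N$ as in the paragraph preceding the theorem; since $1_{N}\in N'$, $\Psi_{N}$ is $N'$-bimodular and $\Psi_{N}(1_{N}x1_{N})=1_{N}\Psi_{N}(x)1_{N}$, and using the given $E_{N}$ together with Takesaki's theorem one checks that $E_{P}:=\Psi_{N}|_{1_{N}M1_{N}}$ is the normal faithful conditional expectation onto $P$ preserving $\mathrm{Tr}\circ E|_{1_{N}M1_{N}}$. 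The crucial claim is that $\Psi_{N}$ annihilates the ideal $I=\mathbb{K}(A,\Gamma;\mathcal{G})\cap C^{\ast}(B,C)$ after compression by $1_{N}$, i.e.\ $\Psi_{N}(1_{N}x1_{N})=0$ for $x\in I$. Granting this, the u.c.p.\ map $x\mapsto\Psi_{N}(1_{N}x1_{N})$ on $C^{\ast}(B,C)$ descends to a u.c.p.\ map $\bar{\Xi}$ on $C^{\ast}(B,C)/I$; and since $c\in M'\subseteq N'$ and $1_{N}\in N'$, while $1_{N}b1_{N}\in 1_{N}M1_{N}$ for $b\in B$, the $N'$-bimodularity of $\Psi_{N}$ gives $\bar{\Xi}(\nu(b\otimes c))=\Psi_{N}(1_{N}b1_{N})\,c\,1_{N}=E_{P}(1_{N}b1_{N})\,c\,1_{N}=\Phi_{P}(b\otimes c)$, where $\nu$ is the min-continuous map of Proposition \ref{bi-ex cross}. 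Thus $\Phi_{P}=\bar{\Xi}\circ\nu$ is min-continuous on $B\otimes C$, and Lemma \ref{wk ex ao} (with $p=1_{N}$, the subalgebra $P$, the expectation $E_{P}$, and $B,C$ as above) yields that $P=N'\cap 1_{N}M1_{N}$ is injective --- which is conclusion (i).

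It remains to prove the claim, and this is where I expect the real work to lie. By positivity of $\Psi_{N}$ and hereditarity of $\mathbb{K}(A,\Gamma;\mathcal{G})$, the claim reduces to $\Psi_{N}(1_{N}(1\otimes\chi_{\Omega})1_{N})=0$ for every $\Omega\subset\Gamma$ small relative to $\mathcal{G}$, hence, since $\Omega\subset\bigcup_{i}s_{i}\Lambda_{i}t_{i}$, to $\Psi_{N}(1_{N}(1\otimes\chi_{s\Lambda t})1_{N})=0$ for a single $\Lambda\in\mathcal{G}$. Now $1\otimes\chi_{s\Lambda t}=w\,e_{A\mathbin{\bar{\rtimes}}\Lambda}\,w^{\ast}$, where $e_{A\mathbin{\bar{\rtimes}}\Lambda}=1\otimes\chi_{\Lambda}$ is the Jones projection of $A\mathbin{\bar{\rtimes}}\Lambda\subset M$ on $L^{2}(M,\mathrm{Tr}\circ E)=L^{2}(A)\otimes\ell^{2}(\Gamma)$ and $w=(Ju_{t}J)^{\ast}u_{s}$ is a unitary (a product of an element of $M'$ and one of $M$); Popa's intertwining techniques, in the form of Theorem \ref{em thm} combined with the properness of $\Psi_{N}$, say that $N\not\preceq_{eMe}q(A\mathbin{\bar{\rtimes}}\Lambda)q$ for every finite-trace projection $q\in A$ (with $e=1_{N}\vee q$) is equivalent to $\Psi_{N}$ killing the corresponding cut-down Jones projection, which, after translating by $w$ and compressing by $1_{N}$, is exactly $1_{N}(1\otimes\chi_{s\Lambda t})1_{N}$. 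The mechanism of the nontrivial direction is: if $\Psi_{N}(1_{N}(1\otimes\chi_{s\Lambda t})1_{N})\ne 0$, pick a unit vector $\xi\in 1_{N}H$ with $\langle\Psi_{N}(\cdot)\xi,\xi\rangle>0$; using $\langle u(1\otimes\chi_{s\Lambda t})u^{\ast}\xi,\xi\rangle=\|e_{A\mathbin{\bar{\rtimes}}\Lambda}w^{\ast}u^{\ast}\xi\|^{2}$ and, for $\xi=\widehat{c}$ with $c$ in a $2$-norm-dense subset of $1_{N}L^{2}(M)$, the identity $\|e_{A\mathbin{\bar{\rtimes}}\Lambda}w^{\ast}u^{\ast}\widehat{c}\|^{2}=\|E_{A\mathbin{\bar{\rtimes}}\Lambda}(b^{\ast}u^{\ast}a)\|_{2}^{2}$ with $a=cu_{t}^{\ast}$ and $b=1_{N}u_{s}$, one averages over $\mathcal{U}(N_{n})$, passes to the ultralimit, extracts a sequence in $\mathcal{U}(N)$ along which this stays bounded away from $0$, and, cutting $a,b$ down by a finite-trace projection $q\in A$ dominating $\alpha_{s^{-1}}(1_{N})$, feeds it into Theorem \ref{em thm} for the finite algebra $q(A\mathbin{\bar{\rtimes}}\Lambda)q\subset eMe$ to obtain $N\preceq_{eMe}q(A\mathbin{\bar{\rtimes}}\Lambda)q$, i.e.\ (ii). The main obstacle is exactly this bookkeeping: because $M=A\mathbin{\bar{\rtimes}}\Gamma$ need not be semifinite, Popa's criterion cannot be applied to $M$ directly, so everything must be routed through the finite corner $q(A\mathbin{\bar{\rtimes}}\Lambda)q$ inside $eMe$, the non-$\mathrm{Tr}$-preserving shifts $\alpha_{s},\alpha_{t}$ must be tracked, and one must check that the modules $1_{N}L^{2}(M)1_{A\mathbin{\bar{\rtimes}}\Lambda}$ and all the $2$-norms that occur stay finite --- which is precisely the role of the hypothesis on $p$.
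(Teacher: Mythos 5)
Your overall skeleton does match the paper's: assume (ii) fails, build the proper conditional expectation $\Psi_{N}$ from an AFD exhaustion of $N$, show it annihilates $I=\mathbb{K}(A,\Gamma;{\cal G})\cap C^{\ast}(B,C)$, compose with $\nu$ from Proposition \ref{bi-ex cross} to get min-continuity of the map $b\otimes c\mapsto \Psi_{N}(1_{N}b1_{N})c1_{N}$, and feed this into Lemma \ref{wk ex ao}, using weak exactness of $C=JBJ$ in $M'$ (via Proposition \ref{cro pro}) and the normality of $\Psi_{N}\mid_{1_{N}M1_{N}}$ obtained from $E_{N}$ and Takesaki's theorem. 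All of that is exactly how the paper concludes.

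The genuine gap is in the central claim $\Psi_{N}(1_{N}(1\otimes\chi_{s\Lambda t})1_{N})=0$, which you route through criterion $(3)$ of Theorem \ref{em thm}. From $\langle\Psi_{N}(1\otimes\chi_{s\Lambda t})\xi,\xi\rangle>0$ and the averaging formula you can only extract, for each $n$, \emph{some} unitaries $u\in{\cal U}(N_{n})$ for which the single coefficient $\|E_{q(A\mathbin{\bar{\rtimes}}\Lambda)q}(b^{\ast}u^{\ast}a)\|_{2}$ is bounded below; but negating $(3)$ requires that \emph{every} net of unitaries of $N$ leaves some coefficient non-vanishing (equivalently, a lower bound uniform over all of ${\cal U}(N)$ for a fixed finite family of $a_{i},b_{i}$), and a positive Haar average does not preclude other unitaries making all coefficients small. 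Likewise, the ``equivalence'' you assert between $N\not\preceq_{eMe}q(A\mathbin{\bar{\rtimes}}\Lambda)q$ for all finite-trace $q$ and $\Psi_{N}$ killing the cut-down Jones projection is not contained in Theorem \ref{em thm}; the direction you need is precisely the hard step. The paper proves it via criterion $(4)$: since $1\otimes\chi_{s\Lambda}$ commutes with $J(A\mathbin{\bar{\rtimes}}\Lambda)J$ and ${\cal U}(N)\subset M$ commutes with $M'$, properness places $\Psi_{\widetilde{N}}(1\otimes\chi_{s\Lambda})$ in $\widetilde{N}'\cap(Jq(A\mathbin{\bar{\rtimes}}\Lambda)qJ)'$; identifying $(Jq(A\mathbin{\bar{\rtimes}}\Lambda)qJ)'\simeq(A\mathbin{\bar{\rtimes}}\Lambda)\mathbin{\bar{\otimes}}\mathbb{B}(\ell^{2}(\Gamma/\Lambda))z(\tilde{q})$, each spectral projection $r$ gives an $\widetilde{N}$-$q(A\mathbin{\bar{\rtimes}}\Lambda)q$-submodule of $1_{N}L^{2}(M)q$ whose right dimension is finite by the estimate $\widetilde{\mathrm{Tr}}(\tilde{r})\leq C\,\widetilde{\mathrm{Tr}}(1_{N}(1\otimes\chi_{s\Lambda})1_{N})\leq C\,\mathrm{Tr}(\alpha_{\sigma(s\Lambda)}(p))<\infty$ (this is where properness, normality of $\widetilde{\mathrm{Tr}}$, the module-dimension bounds of Subsection \ref{dim}, and the hypothesis on $p$ actually enter), so a nonzero value would force (ii) by $(4)\Rightarrow(1)$. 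This commutant-plus-dimension argument is missing from your sketch, and the unitary-criterion substitute you propose does not deliver the conclusion.
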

\begin{proof}
For simplicity, we put $K:=L^{2}(A)\otimes\ell^{2}(\Gamma)$ and note that $K\simeq L^{2}(M,\omega)$ as a standard representation of $M$, where $\omega$ is the dual weight of $\rm Tr$.

Assume $N\preceq_{eMe}q(A\mathbin{\bar{\rtimes}}\Lambda) q$ for any $\Lambda \in {\cal G}$ and $q\in A$ with $\mathrm{Tr}(q)<\infty$.
By assumption, there exists a proper conditional expectation 
\begin{equation*}
\Psi_{N} \colon \mathbb{B}(1_{N}K) \longrightarrow N'\cap \mathbb{B}(1_{N}K),
\end{equation*}
and we extend $\Psi_{N}$ on $\mathbb{B}(K)$, composing with the compression map by $1_{N}$. Here we claim that $\mathbb{K}(A,\Gamma;{\cal G})\subset \ker \Psi_{N}$.

Since $\Psi_{N}$ is u.c.p, it suffice to show $1\otimes c_{0}(\Gamma, {\cal G})\subset \ker \Psi_{N}$, and so we will show $\Psi_{N}(1\otimes\chi_{s\Lambda t})=0$ for any $s,t\in \Gamma$ and $\Lambda\in{\cal G}$, where $\chi_{s\Lambda t}$ is the characteristic function on $s\Lambda t\subset \Gamma$. We may further assume $t=e$ (the unit of $\Gamma$), since
\begin{equation*}
\Psi_{N}(1\otimes\chi_{s\Lambda t})=\Psi_{N}(1\otimes(\rho_{t^{-1}}\chi_{s\Lambda}\rho_{t}))=(1\otimes\rho_{t^{-1}})\Psi_{N}(1\otimes\chi_{s\Lambda})(1\otimes\rho_{t}).
\end{equation*}
Take any finite projection $q\in A$ with $\mathrm{Tr}(q)<\infty$, and put $\tilde{q}:=JqJ\in M'\subset N'$, $\widetilde{N}:=N\tilde{q}$, and $\Psi_{\widetilde{N}}(x):=\tilde{q}\Psi_{N}(x)\tilde{q}$ for $x\in \mathbb{B}(K)$. Consider a natural right $M$ action on $K$ with $J$ and write $\tilde{q}1_N K=1_N Kq$. It is easy to see that  $\Psi_{\widetilde{N}}$ gives a proper conditional expectation from $\mathbb{B}(1_{N}Kq)$ onto $\widetilde{N}'\cap\mathbb{B}(1_{N}Kq)$. We will show $\Psi_{\widetilde{N}}(1\otimes\chi_{s\Lambda})=0$ and this says $\Psi_{N}(1\otimes\chi_{s\Lambda})=0$ since $q\in A$ is arbitrary.

The properness of $\Psi_{\widetilde{N}}$ says that $\Psi_{\widetilde{N}}(1\otimes\chi_{s\Lambda})$ is in the commutant of $Jq(A\mathbin{\bar{\rtimes}}\Lambda)qJ$, and so any spectral projection $r$ of $\Psi_{\widetilde{N}}(1\otimes\chi_{s\Lambda})$ is in $\widetilde{N}'\cap (Jq(A\mathbin{\bar{\rtimes}}\Lambda)qJ)' \cap \mathbb{B}(1_{N}Kq)$. Hence, the subspace $H:=rK\subset 1_{N}Kq\simeq 1_{N}L^{2}(M)q$ has a structure of $\widetilde{N}$-$q(A\mathbin{\bar{\rtimes}}\Lambda)q$-submodule of $L^{2}(M)$. 
By Theorem $\ref{em thm}$ and our assumption, $\dim_{qA\mathbin{\bar{\rtimes}}\Lambda q}H$ is zero or infinite.

Identifying $\Lambda\times(\Gamma/\Lambda)\ni(g,s\Lambda)\mapsto \sigma(s\Lambda)g\in\Gamma$ by a fixed section $\sigma :\Gamma/\Lambda\rightarrow\Gamma$ with $\sigma(\Lambda)=e$, 
we have a unitary $\ell^{2}(\Lambda)\otimes\ell^{2}(\Gamma/\Lambda)\ni \delta_{g}\otimes\delta_{s\Lambda}\mapsto \delta_{\sigma(s\Lambda)g}\in\ell^{2}(\Gamma)$. Notice that the right action of $\Lambda$ on $\ell^{2}(\Gamma)\simeq\ell^{2}(\Lambda)\otimes\ell^{2}(\Gamma/\Lambda)$ is of the form $\rho_{g}\otimes 1$ for $g\in\Lambda$. Hence we have 
\begin{equation*}
J(A\mathbin{\bar{\rtimes}}\Lambda)J\simeq W^{\ast}(a\otimes1\otimes1,v_{g}\otimes\rho_{g}\otimes1,(a\in A, g\in\Lambda))\subset\mathbb{B}(L^{2}(A)\otimes\ell^{2}(\Lambda)\otimes\ell^{2}(\Gamma/\Lambda)),
\end{equation*}
and this implies $(J(A\mathbin{\bar{\rtimes}}\Lambda )J)'\simeq(A\mathbin{\bar{\rtimes}}\Lambda)\mathbin{\bar{\otimes}}\mathbb{B}(\ell^{2}(\Gamma/\Lambda))$.
Then, there is the following identification: 
\begin{eqnarray*}
(Jq(A\mathbin{\bar{\rtimes}}\Lambda)qJ)'=(J(A\mathbin{\bar{\rtimes}}\Lambda) J)'\tilde{q}
\simeq (A\mathbin{\bar{\rtimes}}\Lambda)\mathbin{\bar{\otimes}}\mathbb{B}(\ell^{2}(\Gamma/\Lambda))\tilde{q}\simeq (A\mathbin{\bar{\rtimes}}\Lambda)\mathbin{\bar{\otimes}}\mathbb{B}(\ell^{2}(\Gamma/\Lambda))z(\tilde{q}),
\end{eqnarray*}
where $z(\tilde{q})$ is the central support of $\tilde{q}$ in $(A\mathbin{\bar{\rtimes}}\Lambda)\mathbin{\bar{\otimes}}\mathbb{B}(\ell^{2}(\Gamma/\Lambda))$.
 Hence we can find the projection $\tilde{r}\in (A\mathbin{\bar{\rtimes}}\Lambda)\mathbin{\bar{\otimes}}\mathbb{B}(\ell^{2}(\Gamma/\Lambda))z(\tilde{q})$ corresponding to $r$. Note that $\tilde{r}$ satisfies  $\tilde{r}\tilde{q}=r$ and $\tilde{r}\leq Cz(\tilde{q})\Psi_{N}(1\otimes\chi_{s\Lambda})$ for some $C>0$ (notice that $\Psi_{N}(1\otimes\chi_{s\Lambda})\in (A\mathbin{\bar{\rtimes}}\Lambda)\mathbin{\bar{\otimes}}\mathbb{B}(\ell^{2}(\Gamma/\Lambda))$). Then it holds $H=rKq=\tilde{r}Kq$ and $Kq$ has the following right $q(A\mathbin{\bar{\rtimes}}\Lambda) q$-module isometry:
\begin{equation*}
L^{2}(A\mathbin{\bar{\rtimes}}\Lambda)q\otimes\ell^{2}(\Gamma/\Lambda)\simeq
L^{2}(A)q\otimes\ell^{2}(\Lambda)\otimes\ell^{2}(\Gamma/\Lambda)\simeq
L^{2}(A)q\otimes\ell^{2}(\Gamma)=Kq.
\end{equation*}
Hence, by the observation in Subsection $\ref{dim}$, the dimension of $H$ on $q(A\mathbin{\bar{\rtimes}}\Lambda)q$ with $\omega(\cdot)/\omega(q)=:\tilde{\omega}$ is smaller than $\widetilde{\rm{Tr}}(\tilde{r})$, where $\widetilde{\rm{Tr}}:=\tilde{\omega}\otimes \mathrm{Tr_{\mathbb{B}(\ell^{2}(\Gamma/\Lambda))}}$ on  $(A\mathbin{\bar{\rtimes}}\Lambda)\mathbin{\bar{\otimes}} \mathbb{B}(\ell^{2}(\Gamma/\Lambda))\subset \mathbb{B}(L^{2}(A)\otimes \ell^{2}(\Lambda)\otimes\ell^{2}(\Gamma/\Lambda))$.
Now, the value of this is finite since
\begin{eqnarray*}
\widetilde{\rm{Tr}}(\tilde{r})
&\leq& C\cdot \widetilde{\rm{Tr}}(\Psi_{N}(1\otimes\chi_{s\Lambda}))\\
&\leq& C \cdot \widetilde{\rm{Tr}}(1_{N}(1\otimes\chi_{s\Lambda})1_{N})\\
&=& C \cdot \widetilde{\rm{Tr}}(1\otimes\chi_{s\Lambda})1_{N}(1\otimes\chi_{s\Lambda}))\\
&\leq& C \cdot \widetilde{\rm{Tr}}(1\otimes\chi_{s\Lambda})p(1\otimes\chi_{s\Lambda}))\\
&=& C \cdot \mathrm{Tr}(\alpha_{\sigma(s\Lambda)}(p))\\
&<&\infty,
\end{eqnarray*}
where we used properness of $\Psi_{N}$ and normality of $\widetilde{\rm{Tr}}$. Hence, $\widetilde{\rm{Tr}}(\tilde{r})=0$ and $\tilde{r}=r=0$. This means $\Psi_{\tilde{N}}(1\otimes\chi_{s\Lambda})=0$ and our claim is completed.

Now, combined with Proposition $\ref{bi-ex cross}$ and the claim above, we have a well-defined and min-bounded map
\begin{equation*}
\Phi_{N'\cap1_{N}M1_{N}}:=\Psi_{N}\circ \nu \colon B\otimes C \longrightarrow \frac{C^{\ast}(B,C)}{I} \longrightarrow N'\cap \mathbb{B}(1_{N}K),
\end{equation*}
where $\nu,B,C$, and $I$ are as in Proposition $\ref{bi-ex cross}$. We note that the map $\Phi_{N'\cap1_{N}M1_{N}}$ is same as one given in Lemma $\ref{wk ex ao}$ with the conditional expectation $\Psi_{N}$. To complete assumptions in this lemma, we need weak exactness of $C=J(A\rtimes_{r}\Gamma)J$ in $M'=JMJ$ and normality of $\Psi_{N}$ on $M$. 
We know $A\rtimes_{r}\Gamma$ is weakly exact in $M$ by Proposition $\ref{cro pro}$, and so it is not so difficult to see the weak exactness of $C$ in $M'$. For the normality of $\Psi_{N}$, we use the map $E_{N}$ in assumption.

Let $\tau$ be a faithful normal trace state on $N$ and put $\tilde{\tau}:=\tau\circ E_{N}$ on $1_{N}M1_{N}$. By properness, the restriction of $\Psi_{N}$ on $1_{N}M1_{N}$ gives the unique $\tilde{\tau}$-preserving conditional expectation onto $N'\cap1_{N}M1_{N}$, and so it is faithful and normal on $1_{N}M1_{N}$ by Takesaki's conditional expectation theorem.
 Thus, we can apply Lemma $\ref{wk ex ao}$ to $M,N$, and $\Psi_{N}$ so that we have injectivity of $N'\cap1_{N}M1_{N}$.
\end{proof}

\begin{Rem}\upshape\label{bi ex fin}
In this theorem, if we further assume the $\Gamma$-action is Tr-preserving, $\Gamma$ and $\cal G$ are countable, $A$ has separable predual (so that $M$ is semifinite with separable predual), then the same results holds for any (possibly non-unital) von Neumann subalgebra $N\subset pMp$, where $p$ is a projection in $A$ with $\mathrm{Tr}(p)<\infty$. 
Indeed, if $N\preceq_{eMe}q(A\mathbin{\bar{\rtimes}}\Lambda) q$ for any $\Lambda \in {\cal G}$ and $q\in A$ with $\mathrm{Tr}(q)<\infty$ (which exactly means $N\preceq_{M}A\mathbin{\bar{\rtimes}}\Lambda$ for any $\Lambda \in {\cal G}$ in semifinite setting), we can find an abelian unital von Neumann subalgebra $B\subset N$ satisfying the same condition as $N$, appealing Proposition $\ref{cor F}$. 
Then, injectivity of $B'\cap 1_{N}M1_{N}$ implies that of $N'\cap 1_{N}M1_{N}$ since they are finite.
\end{Rem}

\subsection{\bf Examples of prime factors}\label{ex prim}

As a corollary of our main theorem, we give some new examples of prime factors. In this subsection, for simplicity, we use the term $\it with$ $\it expectation$ for (non-unital) inclusion $N\subset M$ if there exists a faithful normal conditional expectation from $1_NM1_N$ onto $N$.

 The following corollary is a generalization of $\cite[\rm Theorem\ 4.6]{solid 3}$ and we use a similar way to as in the proof of $\cite[\rm Proposition\ 2.7]{GJ}$.

\begin{Cor}\label{ab prim}
Let $A$ be an abelian von Neumann algebra with separable predual and $\Gamma$ be a countable discrete group acting on $A$. Let $M$ be a (possibly non-unital) von Neumann subalgebra of $A\mathbin{\bar{\rtimes}}\Gamma$ with expectation and denote $\widetilde{M}:=1_{M}(A\mathbin{\bar{\rtimes}}\Gamma)1_{M}$. Assume $\Gamma$ is bi-exact.
\begin{itemize}
\item[$(1)$]If $M$ has no type $\rm I$ direct summand, then the relative commutant $M'\cap \widetilde{M}$ is injective.

\item[$(2)$]If $M$ is non-injective, then $M$ is prime.
\end{itemize}
\end{Cor}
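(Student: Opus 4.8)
The plan is to feed carefully chosen subalgebras into Theorem~\ref{bi ex thm}, using that when $A$ is abelian the Popa-intertwining alternative cannot occur for algebras without a type~$\mathrm I$ summand, and then to transport the resulting injectivity around by conditional expectations. Write $\mathcal M:=A\mathbin{\bar{\rtimes}}\Gamma$. First I would record that the hypotheses of Theorem~\ref{bi ex thm} are in force: $A$ abelian with separable predual is injective and carries a faithful normal \emph{trace state} $\mathrm{Tr}$; since $\Gamma$ is bi-exact we take $\mathcal G=\{\{e\}\}$, so $A\mathbin{\bar{\rtimes}}\Lambda=A$ for the only $\Lambda\in\mathcal G$, the $\Lambda$-action is trivially $\mathrm{Tr}$-preserving, and with $p=1_A$ one has $1_N\le p$ and $\mathrm{Tr}(\alpha_s(p))=1<\infty$ for every $s$. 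Hence, for any separable-predual finite injective $N\subset\mathcal M$ carrying a conditional expectation $1_N\mathcal M1_N\to N$, Theorem~\ref{bi ex thm} yields the dichotomy: either $N'\cap 1_N\mathcal M1_N$ is injective, or $N\preceq_{\mathcal M}A$. If in addition $N$ has no type~$\mathrm I$ direct summand (equivalently, no nonzero abelian projection), the second alternative is excluded: an intertwining would give a unital normal $\ast$-homomorphism $\theta\colon eNe\to fqAqf$ into an abelian algebra, forcing $eNe$ abelian (the observation recorded after Definition~\ref{em def}). So we obtain: $(\ast)$ if $N\subset\mathcal M$ is a hyperfinite type~$\mathrm{II}_1$ von Neumann algebra (i.e.\ finite, injective, no type~$\mathrm I$ summand) with separable predual admitting a conditional expectation $1_N\mathcal M1_N\to N$, then $N'\cap 1_N\mathcal M1_N$ is injective.

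For part~(1), I would manufacture such an $N$ inside $M$ with $1_N=1_M$. Let $F\colon\widetilde M\to M$ be the given faithful normal conditional expectation, fix a faithful normal state $\psi_0$ on $M$, and put $\psi:=\psi_0\circ F$; then $F$ is $\psi$-preserving, so $\sigma^{\psi}_t(M)=M$ by Takesaki's theorem. Because $M$ has no type~$\mathrm I$ direct summand one can choose $\psi_0$ so that the centralizer $M^{\psi_0}$ contains a hyperfinite type~$\mathrm{II}_1$ subalgebra $N$ with $1_N=1_M$: directly when $M$ is finite, via the structure of states on type~$\mathrm{III}$ factors (and direct integrals) when $M$ is of type~$\mathrm{III}$, and after passing to a finite corner $pMp$ with $z_M(p)=1$ — which changes $M'\cap\widetilde M$ only by the canonical corner isomorphism — when $M$ is properly infinite semifinite. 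Then $\sigma^{\psi}_t(N)=\sigma^{\psi_0}_t(N)=N$, so Takesaki's theorem gives a $\psi$-preserving conditional expectation $1_N\mathcal M1_N=\widetilde M\to N$, and $(\ast)$ makes $N'\cap\widetilde M$ injective. On the other hand $\sigma^{\psi}_t(M)=M$ forces $\sigma^{\psi}_t(M'\cap\widetilde M)=M'\cap\widetilde M$, so Takesaki's theorem also produces a $\psi$-preserving conditional expectation $\mathcal E\colon\widetilde M\to M'\cap\widetilde M$. Since $N\subset M$ we have $M'\cap\widetilde M\subset N'\cap\widetilde M$, and $\mathcal E$ restricts to a conditional expectation $N'\cap\widetilde M\to M'\cap\widetilde M$; thus $M'\cap\widetilde M$ is a subalgebra-with-expectation of the injective $N'\cap\widetilde M$, hence injective.

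For part~(2), suppose $M=M_1\mathbin{\bar{\otimes}}M_2$ with neither factor of type~$\mathrm I$. As a tensor product of injective algebras is injective and $M$ is non-injective, one factor, say $M_1$, is non-injective. Since $M_2$ is not of type~$\mathrm I$ there is $z\in Z(M_2)$ with $zM_2\neq 0$ having no type~$\mathrm I$ summand; then $zM_2\cong 1\otimes zM_2\subset M\subset\mathcal M$, with $\widetilde{zM_2}=(1_{M_1}\otimes z)\mathcal M(1_{M_1}\otimes z)$, and a conditional expectation $\widetilde{zM_2}\to zM_2$ is obtained by composing the corner of $F$ with a slice $\phi_1\otimes\mathrm{id}$. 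Applying part~(1) to $zM_2$ gives that $(zM_2)'\cap\widetilde{zM_2}$ is injective. But $M_1\mathbin{\bar{\otimes}}\mathbb{C}z\cong M_1$ is a unital subalgebra of $\widetilde{zM_2}$ commuting with $1\otimes zM_2$, hence lies in $(zM_2)'\cap\widetilde{zM_2}$, and it admits a conditional expectation from $(zM_2)'\cap\widetilde{zM_2}$: compose the $zM_2$-bimodular corner of $F$ with a slice over $Z(zM_2)$. Therefore $M_1$ is injective, a contradiction; so $M$ is prime.

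The step I expect to be the main obstacle is the production, in part~(1), of a hyperfinite type~$\mathrm{II}_1$ subalgebra of $M$ with \emph{full} unit sitting inside the centralizer of a suitable faithful normal state, and the reduction of the type~$\mathrm{II}_\infty$ and non-factorial cases to this via corners and direct integrals, all while tracking which conditional expectations (the slices, the corners of $F$, and the Takesaki expectations) are actually available. The conceptual input is lighter: abelianness of $A$ disposes of the intertwining option in Theorem~\ref{bi ex thm}, and $F$ together with Takesaki's theorem supplies the expectation $\widetilde M\to M'\cap\widetilde M$ needed to push injectivity back down — and it is precisely at the level of Theorem~\ref{bi ex thm} (through the weak exactness of $J(A\mathbin{\rtimes_r}\Gamma)J$ in $M'$, i.e.\ Proposition~\ref{cro pro}) that our weak exactness enters.
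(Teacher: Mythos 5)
Your overall architecture is the same as the paper's: apply Theorem~\ref{bi ex thm} with ${\cal G}=\{\{e\}\}$ to a suitable finite injective subalgebra $N$, rule out $N\preceq A$ because $A$ is abelian and $N$ has no type $\mathrm I$ summand (the observation after Definition~\ref{em def}; note it gives that $eNe$ has a nonzero abelian direct summand via the central kernel of $\theta$, not that $eNe$ is abelian, but the contradiction is the same), and then push injectivity from $N'\cap\widetilde M$ down to $M'\cap\widetilde M$ by the Takesaki expectation associated with $\psi=\psi_0\circ F$, exactly as in the paper; your part (2) is the paper's argument verbatim up to relabeling. The one genuine gap is the step you yourself flag as the main obstacle: the existence, for an arbitrary $M$ with no type $\mathrm I$ direct summand, of a faithful normal state whose centralizer contains a unital hyperfinite $\mathrm{II}_1$ subalgebra. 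Your case analysis is fine for finite $M$, and the reduction of the properly infinite semifinite case to a finite corner $pMp$ with $z_M(p)=1_M$ works (though it silently uses the standard identity $(pMp)'\cap p\widetilde Mp=p(M'\cap\widetilde M)p$ and the fact that $x\mapsto xp$ is injective on $M'\cap\widetilde M$ when $z_M(p)=1_M$, which deserve a line). But for type $\mathrm{III}$ the phrase ``via the structure of states on type $\mathrm{III}$ factors (and direct integrals)'' is not an argument: already for a single $\mathrm{III}_0$ factor, and a fortiori for non-factorial $M$, producing a faithful normal state with type $\mathrm{II}_1$ centralizer is a substantial theorem. This is precisely where the paper invokes Haagerup--St{\o}rmer, \cite[Theorem 11.1]{HS}: there is a faithful normal state $\phi$ on $M$ with $M_\phi$ of type $\mathrm{II}_1$; one then takes a unital copy of the hyperfinite $\mathrm{II}_1$ factor inside $M_\phi$, and the expectation $\widetilde M\to N$ comes from composing $E_M$, the $\phi$-preserving expectation $M\to M_\phi$, and a trace-preserving expectation $M_\phi\to N$ (equivalently, your Takesaki argument).

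With that citation inserted in place of your case-by-case construction, your proof closes and coincides with the paper's; without it, part (1) is not established for type $\mathrm{III}$ (and mixed-type) $M$, which is the case the corollary is chiefly aimed at.
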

\begin{proof}
(1) By $\cite[\rm Theorem\ 11.1]{HS}$, there exists a faithful normal state $\phi$ on $M$ such that $M_{\phi}$ is of type $\rm{II}_{1}$, where $M_{\phi}$ is the centralizer of $\phi$ defined by
\begin{equation*}
M_{\phi}:=\{x\in M\mid \sigma^{\phi}_{t}(x)=x\ {\rm for\ any}\ t\in\mathbb{R}\}.
\end{equation*}
Note that there exists the unique $\phi$-preserving conditional expectation from $M$ onto $M_{\phi}$. Since $M_{\phi}$ is of type $\rm II_{1}$, it has a copy of the hyperfinite $\rm II_{1}$ factor, say $N$, as a unital subalgebra. Now apply Theorem $\ref{bi ex thm}$ to $N$ with any faithful normal state on $A$ (which is automatically a trace), and we have injectivity of $N'\cap \widetilde{M}$, since  $N\preceq_{A\mathbin{\bar{\rtimes}}\Gamma}A$ does not happen ($N$ is of type $\rm{II}_{1}$ and $A$ is abelian).

Put $\tilde{\phi}:=\phi\circ E_{M}$ and observe $\sigma^{\tilde{\phi}}\mid_{M}=\sigma^{\phi}$ and $M_{\phi}\subset \widetilde{M}_{\tilde{\phi}}$ (this follows from the proof of Takesaki's conditional expectation theorem). Then, it holds that $\sigma^{\tilde{\phi}}(M'\cap\widetilde{M})=M'\cap\widetilde{M}$ since
\begin{equation*}
y\sigma_{t}^{\tilde{\phi}}(x)
=\sigma_{t}^{\tilde{\phi}}(\sigma_{t^{-1}}^{\tilde{\phi}}(y)x)
=\sigma_{t}^{\tilde{\phi}}(x\sigma_{t^{-1}}^{\tilde{\phi}}(y))
=\sigma_{t}^{\tilde{\phi}}(x)y
\end{equation*}
for any $y\in M$ and $x\in M'\cap\widetilde{M}$.
 Hence, there exists the $\tilde{\phi}$-preserving conditional expectation from $\widetilde{M}$ onto $M'\cap\widetilde{M}$. The restriction of this map gives a conditional expectation from $N'\cap \widetilde{M}$ onto $M'\cap\widetilde{M}$, and this implies injectivity of $M'\cap\widetilde{M}$.\\
(2) Let $M=M_{1}\mathbin{\bar{\otimes}}M_{2}$ be a tensor decomposition of $M$ and assume $M_{2}$ is not of type I. Then, $M_{2}$ has the decomposition $M_{2}=N_{\rm I}\oplus N$, where $N_{\rm I}$ is of type I and $N$ has no type I direct summand. 
By the first part of the proof, $N'\cap 1_{N}\widetilde{M}1_{N}$ is injective.
Since the inclusion 
\begin{equation*}
M_{1}=M_{1}\otimes\mathbb{C}1_{N}\subset M_{1}\mathbin{\bar{\otimes}}1_{N}M_{2}1_{N}=1_{N}M1_{N}\subset 1_{N}(A\mathbin{\bar{\rtimes}}\Gamma)1_{N}
\end{equation*}
has a conditional expectation and $M_{1}$ is contained in $N'\cap 1_{N}(A\mathbin{\bar{\rtimes}}\Gamma)1_{N}$, there is a conditional expectation from $N'\cap 1_{N}(A\mathbin{\bar{\rtimes}}\Gamma)1_{N}$ onto $M_{1}$. This implies injectivity of $M_{1}$.

Thus, we proved that $M_{1}$ is injective if $M_{2}$ is not of type I. By symmetry, it holds that $M_{2}$ is injective if $M_{1}$ is not of type I, and this implies that $M$ is prime.
\end{proof}

In the case where $A$ is non-abelian in the corollary above, a similar statement is still true. Ozawa has already mentioned it for finite von Neumann algebras in $\cite[\rm Remark\ 4.8]{solid 3}$, but his proof had a gap. Here we give a proof of his claim using Theorem $\ref{bi ex thm}$ and our result includes a non-finite case. 

In this setting, the primeness of the crossed product algebra is no longer true since there is a simple counter example; for $A\mathbin{\bar{\rtimes}}\Gamma$ and the AFD $\rm II_1$ factor $R$ (with trivial $\Gamma$-action), $(R\mathbin{\bar{\otimes}}A)\mathbin{\bar{\rtimes}}\Gamma$ is not prime since $R\mathbin{\bar{\otimes}}(A\mathbin{\bar{\rtimes}}\Gamma)
\simeq(R\mathbin{\bar{\otimes}}A)\mathbin{\bar{\rtimes}}\Gamma$.
Hence, we need a $\it correct$ concept in the setting as below. 

\begin{itemize}
\item We say a von Neumann algebra $M$ is $\it semiprime$ if it satisfies the following condition: for any decomposition $M=M_{1}\mathbin{\bar{\otimes}}M_{2}$, one of $M_i$ $(i=1,2)$ is injective.
\end{itemize}
We mention that a finite semiprime factor is prime if it is non McDuff (i.e. it does not have a tensor decomposition with the AFD $\rm II_1$ factor).

\begin{Cor}\label{inj prim}
Let $A$ be a finite injective von Neumann algebra with separeble predual and $\Gamma$ be a countable discrete group acting on $A$. Let $M\subset A\mathbin{\bar{\rtimes}}\Gamma$ be a (possibly non-unital) von Neumann subalgebra with expectation and denote $\widetilde{M}:=1_{M}(A\mathbin{\bar{\rtimes}}\Gamma)1_{M}$. Assume $\Gamma$ is bi-exact.
\begin{itemize}
\item[$(1)$]If $M$ has a finite unital subalgebra with expectation which has no injective direct summand, then the relative commutant $M'\cap\widetilde{M}$ is injective.
\item[$(2)$]
If $M$ is a factor and not of type ${\rm III}_1$, then it is semiprime.
\end{itemize}

\end{Cor}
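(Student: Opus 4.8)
The plan is to follow the proof of Corollary \ref{ab prim} closely, systematically replacing ``no type $\mathrm{I}$ direct summand'' by ``no injective direct summand'' and replacing the role of the abelian $A$ there by our injective $A$ via Theorem \ref{bi ex thm} and the abelian--reduction Proposition \ref{cor F}.

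For $(1)$: let $P\subset M$ be a finite unital subalgebra with faithful normal conditional expectation $E_{P}\colon M\to P$ and no injective direct summand. Fix a faithful normal tracial state $\tau_{P}$ on $P$, put $\phi:=\tau_{P}\circ E_{P}$ and $\tilde{\phi}:=\phi\circ E_{M}$ on $A\mathbin{\bar{\rtimes}}\Gamma$; since $\sigma^{\phi}_{t}$ fixes $P$ pointwise, $P\subset M_{\phi}$. As $A$ is injective and $P$ has no injective direct summand, the observation recorded after Definition \ref{em def} gives $P\not\preceq_{A\mathbin{\bar{\rtimes}}\Gamma}A$; hence, applying Proposition \ref{cor F} with $B_{n}:=A$ for all $n$ (after the routine reduction placing us in its semifinite/finite setting --- e.g.\ cutting $A\mathbin{\bar{\rtimes}}\Gamma$ by a finite-trace projection of $A$, or passing to the continuous core, which costs nothing as $A$ is finite and all relevant expectations persist), we obtain a unital abelian von Neumann subalgebra $A_{0}\subset P$ with $A_{0}\not\preceq_{A\mathbin{\bar{\rtimes}}\Gamma}A$. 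Now $A_{0}$ is injective, finite, with separable predual, and inherits a faithful normal conditional expectation from $\widetilde{M}=1_{A_{0}}(A\mathbin{\bar{\rtimes}}\Gamma)1_{A_{0}}$ (compose $E_{M}$, $E_{P}$, and the $\tau_{P}$-preserving expectation $P\to A_{0}$). Applying Theorem \ref{bi ex thm} to $N:=A_{0}$, with $\mathcal{G}$ the trivial family, $\mathrm{Tr}$ a faithful normal tracial state on $A$, and $p=1_{A}$: alternative $(\mathrm{ii})$ there reads $A_{0}\preceq_{eMe}qAq$ for some finite-trace $q\in A$, which is excluded, so alternative $(\mathrm{i})$ holds and $A_{0}'\cap\widetilde{M}$ is injective. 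Finally, since $A_{0}\subset M_{\phi}$, $\sigma^{\tilde{\phi}}$ fixes $A_{0}$ pointwise and (by the very computation in the proof of Corollary \ref{ab prim}) globally preserves $M'\cap\widetilde{M}$; restricting the $\tilde{\phi}$-preserving expectation $\widetilde{M}\to M'\cap\widetilde{M}$ to $A_{0}'\cap\widetilde{M}\supseteq M'\cap\widetilde{M}$ exhibits $M'\cap\widetilde{M}$ as the image of an expectation from the injective algebra $A_{0}'\cap\widetilde{M}$, hence it is injective.

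For $(2)$: write $M=M_{1}\mathbin{\bar{\otimes}}M_{2}$; then $M_{1},M_{2}$ are factors, and neither is of type $\mathrm{III}_{1}$ since $M$ is not. Assume $M_{2}$ is not injective; we prove $M_{1}$ is injective. If $M_{2}$ is type $\mathrm{I}$ this is clear. If $M_{2}$ is type $\mathrm{II}$, pick finite projections $f_{i}\in M_{i}$ ($f_{i}=1$ if $M_{i}$ is finite); then $f_{2}M_{2}f_{2}$ is a non-injective $\mathrm{II}_{1}$ factor, and applying $(1)$ to $N:=\mathbb{C}f_{1}\mathbin{\bar{\otimes}}f_{2}M_{2}f_{2}$ (a subalgebra of $A\mathbin{\bar{\rtimes}}\Gamma$ with expectation whose finite unital subalgebra $N$ has no injective direct summand) gives that $N'\cap\widetilde{N}$ is injective; since $f_{1}M_{1}f_{1}\otimes f_{2}=N'\cap(f_{1}M_{1}f_{1}\mathbin{\bar{\otimes}}f_{2}M_{2}f_{2})$ sits in $N'\cap\widetilde{N}$ with a conditional expectation, $f_{1}M_{1}f_{1}$ is injective and hence so is $M_{1}$. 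If $M_{2}$ is type $\mathrm{III}_{\lambda}$ with $0\le\lambda<1$, argue by contradiction assuming $M_{1}$ is non-injective. By \cite[Theorem 11.1]{HS} and the discrete decomposition of non-type-$\mathrm{III}_{1}$ factors (this is the only place the hypothesis enters), each non-injective $M_{i}$ which is neither type $\mathrm{III}_{1}$ nor type $\mathrm{II}_{\infty}$ has a faithful normal state $\omega_{i}$ with centralizer $(M_{i})_{\omega_{i}}$ finite and with no injective direct summand (a generalized periodic state when $0<\lambda<1$, giving a non-injective $\mathrm{II}_{1}$ factor; a state with $\mathrm{II}_{1}$ centralizer carrying an ergodic flow on its diffuse center when $\lambda=0$, so that the maximal injective central projection is flow-invariant, hence $0$); a type $\mathrm{II}_{\infty}$ summand of $M_{1}$ is handled by a finite corner as above. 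With $\omega:=\omega_{1}\otimes\omega_{2}$ one has $M_{\omega}=(M_{1})_{\omega_{1}}\mathbin{\bar{\otimes}}(M_{2})_{\omega_{2}}$; set $N:=\mathbb{C}1_{M_{1}}\mathbin{\bar{\otimes}}(M_{2})_{\omega_{2}}$, a subalgebra of $A\mathbin{\bar{\rtimes}}\Gamma$ with expectation having $N$ itself as a finite unital subalgebra with no injective direct summand. By $(1)$, $N'\cap\widetilde{N}$ is injective; as $(M_{1})_{\omega_{1}}\otimes 1$ commutes with $N$ and lies in $N'\cap M_{\omega}=M_{1}\mathbin{\bar{\otimes}}Z((M_{2})_{\omega_{2}})\subset N'\cap\widetilde{N}$, composing the restriction of $E_{M}$ with a faithful normal expectation $M_{1}\mathbin{\bar{\otimes}}Z((M_{2})_{\omega_{2}})\to(M_{1})_{\omega_{1}}\otimes 1$ makes $(M_{1})_{\omega_{1}}$ injective, contradicting the choice of $\omega_{1}$. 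Hence $M_{1}$ is injective, and by symmetry $M$ is semiprime.

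I expect the real obstacle to be the reduction used in $(1)$: making Proposition \ref{cor F} (or some substitute) available when $A\mathbin{\bar{\rtimes}}\Gamma$ is not semifinite, i.e.\ passing from ``$P$ does not embed in $A$'' to ``some abelian subalgebra of $P$ does not embed in $A$'' while keeping track of the conditional expectations and the no-injective-summand property; the rest is bookkeeping with expectations and modular automorphism groups. In $(2)$ the delicate point is the structural input that a non-injective factor not of type $\mathrm{III}_{1}$ carries a normal state whose centralizer has no injective direct summand, which fails for type $\mathrm{III}_{1}$ and so pins down the hypothesis.
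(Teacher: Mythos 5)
Your part $(1)$ is essentially the paper's proof: no injective direct summand plus injectivity of $A$ gives $P\not\preceq_{A\mathbin{\bar{\rtimes}}\Gamma}A$ (the observation after Definition \ref{em def}), Proposition \ref{cor F} with $B_{n}=A$ produces an abelian $A_{0}\subset P$ with the same non-embedding property, Theorem \ref{bi ex thm} applied to $A_{0}$ gives injectivity of $A_{0}'\cap\widetilde{M}$, and the modular-expectation argument from Corollary \ref{ab prim}$(1)$ pushes this down to $M'\cap\widetilde{M}$. Your parenthetical worry about Proposition \ref{cor F} is misplaced, though: cutting by a finite-trace projection of $A$ does not make $A\mathbin{\bar{\rtimes}}\Gamma$ semifinite and passing to the core changes the ambient algebra; what one actually uses (as the paper does) is the ``finite setting'' of Proposition \ref{cor F}, which only needs a faithful normal state on the ambient algebra restricting to a trace on the targets, e.g.\ $\tau_{A}\circ E_{A}$ on $A\mathbin{\bar{\rtimes}}\Gamma$. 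Likewise your type $\mathrm{II}$ step in $(2)$ (cut to a $\mathrm{II}_{1}$ corner $f_{2}M_{2}f_{2}$, apply $(1)$ to $\mathbb{C}f_{1}\mathbin{\bar{\otimes}}f_{2}M_{2}f_{2}$, then slice off the other leg) is exactly the paper's key step; only note that a \emph{finite} projection $f_{1}$ need not exist when $M_{1}$ is type $\mathrm{III}$ --- take $f_{1}=1_{M_{1}}$, its finiteness is never used.

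The genuine gap is your treatment of the type $\mathrm{III}_{\lambda}$ $(0\le\lambda<1)$ case of $(2)$. You rest it on the claim that a non-injective factor which is neither $\mathrm{III}_{1}$ nor $\mathrm{II}_{\infty}$ admits a faithful normal state whose centralizer is finite with \emph{no injective direct summand}, and for $\mathrm{III}_{0}$ your justification (``ergodic flow on the diffuse center of the $\mathrm{II}_{1}$ centralizer, so the maximal injective central projection is invariant, hence $0$'') is a non sequitur: invariance under an ergodic flow only forces that projection to be $0$ or $1$, and nothing you say excludes $1$, i.e.\ an injective centralizer. Injectivity always passes \emph{down} to a centralizer (there is a state-preserving expectation onto it), never up, so an injective centralizer is perfectly compatible with $M_{i}$ non-injective, and \cite{HS} gives no control whatsoever on injectivity of the centralizer it produces; even your $0<\lambda<1$ periodic-state claim needs the discrete decomposition to see non-injectivity of the centralizer. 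The paper avoids centralizers entirely: a non-injective $\mathrm{III}_{\lambda}$ factor, $0\le\lambda<1$, has a discrete decomposition $N\mathbin{\bar{\rtimes}}\mathbb{Z}$ with $N$ non-injective of type $\mathrm{II}$ and with a faithful normal expectation onto $N$; cutting $N$ by the maximal projection with injective part and then by a finite projection yields a finite subalgebra with expectation and no injective direct summand, to which your type $\mathrm{II}$ argument (that is, part $(1)$) applies verbatim, giving injectivity of the other tensor factor directly, with the $\mathrm{III}_{1}$ possibility excluded by the same \cite{CT} fact you invoke. Substituting this for your contradiction argument (and dropping the identity $M_{\omega}=(M_{1})_{\omega_{1}}\mathbin{\bar{\otimes}}(M_{2})_{\omega_{2}}$, which is false in general but also unnecessary) repairs the proof and brings it in line with the paper's.
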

\begin{proof}

(1) Let $N\subset M$ be a von Neumann subalgebra as in the statement. Since $N$ has no injective direct summand, we have $N\not\preceq_{A\mathbin{\bar{\rtimes}}\Gamma}A$. By Proposition $\ref{cor F}$, there exists an abelian subalgebra $B\subset N$ with the same property as $N$. Applying Theorem $\ref{bi ex thm}$ to $B$, we have injectivity of $B'\cap \widetilde{M}$. Then injectivity of $M'\cap \widetilde{M}$ follows from the same manner as in the proof of Corollary $\ref{ab prim}$(1), since the inclusion $B\subset M\subset\widetilde{M}$ is with expectation.\\
(2) Let $M=M_{1}\mathbin{\bar{\otimes}}M_{2}$ be a tensor decomposition of $M$. We first prove that $M_2$ is injective if $M_1$ has a non-injective semifinite (non-unital) subalgebra $N$ with expectation. 

In this setting, it is easy to find a non-injective $\it finite$ (non-unital) subalgebra $N$ with expectation. Let $z$ be a maximal projection satisfying $zN$ is injective so that $(1-z)N$ has no injective direct summand. Thus we may assume that $N$ is finite and  has no injective direct summand, and so $N'\cap 1_{N}\widetilde{M}1_{N}$ is injective by (1).
Then $M_{2}$ is injective since 
\begin{equation*}
M_{2}\simeq\mathbb{C}1_{N}\mathbin{\bar{\otimes}}M_{2}
\subset N'\cap 1_{N}\widetilde{M}1_{N}
\subset (1_{N}M_{1}1_{N})\mathbin{\bar{\otimes}}M_{2}
\end{equation*}
and $(1_{N}M_{1}1_{N})\mathbin{\bar{\otimes}}M_{2}$ has a conditional expectation onto $\mathbb{C}1_{N}\mathbin{\bar{\otimes}}M_{2}$. 

Now we treat the general case. The case that $M$ is of type $\rm II$ is now trivial, and so we assume that $M$ is of type ${\rm III}_{\lambda}$ $(0\leq\lambda<1)$. For any decomposition $M=M_{1}\mathbin{\bar{\otimes}}M_{2}$ with $M_1$ non-injective, if $M_{1}$ is of type $\rm II$ or type ${\rm III}_{\lambda}$ $(0\leq\lambda<1)$, we have injectivity of $M_2$ since ${\rm III}_{\lambda}$ factor $(0\leq\lambda<1)$ has a discrete decomposition $N\mathbin{\bar{\rtimes}}\mathbb{Z}$, where $N$ is non-injective and of type $\rm II$. Hence we can end the proof if $M$ does not have a decomposition $M_{1}\mathbin{\bar{\otimes}}M_{2}$ with both of $M_i$ type $\rm III_1$. However, this does not happen since $M$ is not of type $\rm III_1$ (e.g. $\cite[\rm Corollary\ 6.8]{CT}$).
\end{proof}

Finally, we give some explicit examples. 

\begin{Exa}\upshape
Let $\partial\mathbb{F}_n$ be the Gromov boundary of $\mathbb{F}_n$ $(2\leq n<\infty)$. Then it is well-known that the natural $\mathbb{F}_n$-action on $\partial\mathbb{F}_n$ is amenable $\cite{Ad}$ and $\partial\mathbb{F}_n$ has a natural Borel measure so that the crossed product $L^{\infty}(\partial\mathbb{F}_n)\mathbin{\bar{\rtimes}}\mathbb{F}_{n}$ is a type III factor (see for example $\cite{RR}$). By amenability of the action, it is an injective type III factor.
\end{Exa}


\begin{Exa}\upshape
For any non-amenable bi-exact group $\Gamma$, the Bernoulli shift on $A:=\bigotimes R$ or $\bigotimes L^\infty(X)$ is an example. But the primeness of this case was already solved by Popa in a general setting $\cite{Po Ber}$.


Here is a similar classical example. Let $\Gamma$ be the wreath product group $(\mathbb{Z}/2\mathbb{Z})\wr\mathbb{F}_{2}$ and $X$ be the standard Borel space $\prod_{g\in\mathbb{F}_{2}}(\mathbb{Z}/2\mathbb{Z},\mu)_{g}$, where $\mu$ is the point measure which has a different value on $\mathbb{Z}/2\mathbb{Z}$. 
Define an affine $\Gamma$-action $\sigma$ on $X$ by $\sigma((x_{g})_{g},s)(y_{g})_{g}:=(y_{s^{-1}g}+x_{g})_{g}$. 
This action was studied by $\rm Puk\grave{a}nsky$ and he proved that the action is free, ergodic, and the crossed product von Neumann algebra $L^{\infty}(X)\mathbin{\bar{\rtimes}}\Gamma$ is of type III $\cite{Pu}$. Moreover it is non-injective since it contains a non-injective subalgebra $L^{\infty}(X)\mathbin{\bar{\rtimes}}\mathbb{F}_{2}$ with expectation.
Thus, $L^{\infty}(X)\mathbin{\bar{\rtimes}}\Gamma$ is a type III prime factor.
\end{Exa}

\begin{Exa}\upshape\label{fin ex}
Let $M_{\alpha}(\Gamma)$ be a crossed product $\rm{II}_{1}$ factor of the form $M_{\alpha}(\Gamma)=R_{\alpha}\mathbin{\bar{\rtimes}}\Gamma$, where $\alpha\in \mathbb{T}$, $R_{\alpha}$ is the finite von Neumann algebra generated by two unitaries $u,v$ satisfying $uv=\alpha vu$, $\Gamma$ is a non-amenable subgroup of ${\rm SL}(2,\mathbb{Z})$. This factor was studied by Nicoara, Popa and Sasyk $\cite{NPS}$, $\cite{Po 3}$. 
By the corollary above, every von Neumann subalgebra of this is semiprime. In particular, every non-McDuff subfactor is  prime.
\end{Exa}

\begin{Exa}\upshape
This example was studied in $\cite{HV}$ by Houdayer and Vaes. Let $\Gamma$ and $\Lambda$ be a countable discrete group and $\pi\colon \Gamma\rightarrow \Lambda$ be a surjective homomorphism. Let $(A,\tau)$ be a finite von Neumann algebra with a distinguished trace and $(Y,\nu)$ be a standard probability space. Assume $\Gamma$ has a free (properly outer) $\tau$-preserving action on $A$ and $\Lambda$ has a free ergodic non-singular action on $(Y,\nu)$.
Consider a diagonal $\Gamma$-action on $A\mathbin{\bar{\otimes}}L^\infty(Y,\nu)$ via $\pi$. Then it is still free and moreover centrally ergodic, if the $(\ker\pi)$-action on $A$ is so. Thus $M:=(A\mathbin{\bar{\otimes}}L^\infty(Y,\nu))\mathbin{\bar{\rtimes}}\Gamma$ is a factor. Denote $N:=L^\infty(Y,\nu)\mathbin{\bar{\rtimes}}\Lambda$ and notice that the unital inclusion $N\subset M$ is with expectation. 

\begin{Lem}
Keep the setting above. Assume the $(\ker\pi)$-action on $A$ is centrally ergodic and $(Y,\nu)$ is non-atomic. Then $M$ and $N$ have the same type and the same flow of weights.
\end{Lem}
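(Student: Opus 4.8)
The strategy is to compare the continuous cores of $M$ and $N$ and to see that the ``finite part'' coming from $A$ and $\ker\pi$ is invisible to the flow of weights. First I would set $R:=A\mathbin{\bar{\rtimes}}\ker\pi\subset M$, the von Neumann subalgebra generated by $A$ and the canonical unitaries $u_k$ with $k\in\ker\pi$ (viewing $\ker\pi\subset\Gamma$). Since $\ker\pi$ acts on $A$ freely, trace-preservingly and, by hypothesis, centrally ergodically, $R$ is a \emph{finite factor}, with trace $\widetilde\tau:=\tau\circ E_{\ker\pi}$. Inside $M=(A\mathbin{\bar{\otimes}}L^{\infty}(Y))\mathbin{\bar{\rtimes}}\Gamma$ the algebra $R$ commutes with $L^{\infty}(Y)$ (because $\ker\pi$ acts trivially on $Y$), the two together generate $R\mathbin{\bar{\otimes}}L^{\infty}(Y)=(A\mathbin{\bar{\otimes}}L^{\infty}(Y))\mathbin{\bar{\rtimes}}\ker\pi$, and adjoining the remaining $u_g$ ($g\in\Gamma$) recovers $M$.

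Next I would fix the faithful normal state $\phi:=(\tau\otimes\nu)\circ E_\Gamma$, where $E_\Gamma\colon M\to A\mathbin{\bar{\otimes}}L^{\infty}(Y)$ is the canonical conditional expectation. Because $\tau$ is a trace and $L^{\infty}(Y)$ is abelian, $\sigma^{\phi}_t$ is the identity on $A\mathbin{\bar{\otimes}}L^{\infty}(Y)$, while on the canonical unitaries it is multiplication by the Radon--Nikodym cocycle of the $\Gamma$-action on $(Y,\nu)$, i.e.\ $\sigma^{\phi}_t(u_g)=\bigl(\tfrac{d(\nu\circ\gamma_{\pi(g)}^{-1})}{d\nu}\bigr)^{it}u_g\in L^{\infty}(Y)$ (standard modular theory of crossed products, $\cite{Tak 2}$); in particular $\sigma^{\phi}$ is the identity on $R$ and on $L^{\infty}(Y)$. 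Passing to the continuous core $\widetilde M:=M\mathbin{\bar{\rtimes}}_{\sigma^{\phi}}\mathbb R$ with canonical unitaries $\lambda_t$, each $\lambda_t$ then commutes with $R$ and with $L^{\infty}(Y)$, so $\langle L^{\infty}(Y),\lambda_t\colon t\in\mathbb R\rangle''=L^{\infty}(Y)\mathbin{\bar{\otimes}}L(\mathbb R)$, which I identify with $L^{\infty}(Y\times\mathbb R)$; the usual computation identifies conjugation by $u_g$ on this algebra with the skew-product (Maharam) extension $\widetilde\gamma_{\pi(g)}$ of $\gamma_{\pi(g)}$ on $Y\times\mathbb R$.

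The heart of the argument is the identification $Z(\widetilde M)=L^{\infty}(Y\times\mathbb R)^{\Lambda}$, with $\Lambda$ acting by the Maharam extension of its non-singular action on $(Y,\nu)$. To get it I would first prove $R'\cap M=L^{\infty}(Y)$: proper outerness of the $\Gamma$-action on $A$ kills all Fourier coefficients at $g\neq e$, so $R'\cap M\subset R\mathbin{\bar{\otimes}}L^{\infty}(Y)$, and then commuting with $A$ forces membership in $Z(A)\mathbin{\bar{\otimes}}L^{\infty}(Y)$, while commuting with the $u_k$ ($k\in\ker\pi$) together with central ergodicity of $\ker\pi$ on $A$ forces the $Z(A)$-component to be scalar. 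Since $\sigma^{\phi}$ is the identity on $R$, the same argument run inside $\widetilde M$ gives $R'\cap\widetilde M=L^{\infty}(Y)\mathbin{\bar{\otimes}}L(\mathbb R)=L^{\infty}(Y\times\mathbb R)$. As $Z(\widetilde M)\subset R'\cap\widetilde M$, any central $z$ lies in $L^{\infty}(Y\times\mathbb R)$; it automatically commutes with $A$, $L^{\infty}(Y)$ and the $\lambda_t$, and it commutes with $u_g$ exactly when $\widetilde\gamma_{\pi(g)}(z)=z$, whence $Z(\widetilde M)=L^{\infty}(Y\times\mathbb R)^{\Lambda}$. The dual (trace-scaling) $\mathbb R$-action on $\widetilde M$ is trivial on $L^{\infty}(Y)$ and is translation on the $L(\mathbb R)$-factor, hence translation in the $\mathbb R$-coordinate on $Z(\widetilde M)$. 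Exactly the same description (with $A$ trivial) is the standard one for $N=L^{\infty}(Y)\mathbin{\bar{\rtimes}}\Lambda$: its core is $L^{\infty}(Y\times\mathbb R)\mathbin{\bar{\rtimes}}\Lambda$ with centre $L^{\infty}(Y\times\mathbb R)^{\Lambda}$ and the same translation flow. Therefore $M$ and $N$ have isomorphic flows of weights, and by $\cite{CT}$ this forces the same type ${\rm III}_{\lambda}$ ($0\leq\lambda\leq1$) in the non-semifinite cases. In the remaining (semifinite) case, neither $M$ nor $N$ is of type $\rm I$ since $(Y,\nu)$ is non-atomic (the $\Gamma$-action on the centre of $A\mathbin{\bar{\otimes}}L^{\infty}(Y)$ is non-smooth, as it involves the ergodic action of $\Lambda$ on $Y$), and to match ${\rm II}_1$ against ${\rm II}_\infty$ I would note that $M$ is finite if and only if $A\mathbin{\bar{\otimes}}L^{\infty}(Y)$ carries a $\Gamma$-invariant faithful normal tracial state; restricting such a state to $L^{\infty}(Y)$, resp.\ using the $\Gamma$-invariance of $\tau$, this holds if and only if $\nu$ is equivalent to a $\Lambda$-invariant probability measure, i.e.\ if and only if $N$ is finite.

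The step I expect to be the main obstacle is the centre computation for $\widetilde M$, in particular the identity $R'\cap M=L^{\infty}(Y)$: this is precisely where the hypotheses ``free'' and ``centrally ergodic'' enter, and it is what makes the finite factor $R$ — hence all of the $A$-and-$\ker\pi$ data — drop out of the continuous core, reducing the flow-of-weights computation for $M$ to that for $N$. The remaining ingredients — tracking the extension $1\to\ker\pi\to\Gamma\to\Lambda\to1$ through the crossed products, the modular and Maharam computations, and the elementary trace argument distinguishing ${\rm II}_1$ from ${\rm II}_\infty$ — are routine.
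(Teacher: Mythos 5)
Your argument is correct in substance, but it is a genuinely different (and more self-contained) route than the paper's. The paper proves only the type statement directly --- $M$ is not of type $\rm I$ because the diffuse algebra $N$ sits inside $M$ with expectation, finiteness of $M$ is matched with finiteness of $N$ by the invariant-trace observation, and the remaining non-finite cases are settled by citing $\cite[\rm Lem\ V.2.29]{Tak 1}$ --- while the flow-of-weights assertion is simply deferred to the method of $\cite[\rm Corollary\ B]{HV}$. What you have written out is essentially that deferred argument: you pass to the continuous core, identify the action of the $u_g$ on $L^{\infty}(Y)\mathbin{\bar{\otimes}}L(\mathbb{R})\simeq L^{\infty}(Y\times\mathbb{R})$ with the Maharam extension, and use freeness plus central ergodicity of $\ker\pi$ to get $R'\cap\widetilde M=L^{\infty}(Y\times\mathbb{R})$, hence $Z(\widetilde M)=L^{\infty}(Y\times\mathbb{R})^{\Lambda}$ with the translation flow, which coincides with the core description of $N$; the type statement then follows from $\cite{CT}$ together with your two elementary supplements. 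Your route buys a proof of the flow-of-weights claim rather than a citation, makes visible exactly where freeness and central ergodicity are used, and (unlike the paper's type-I argument) does not lean on the inclusion $N\subset M$ with expectation at all; the paper's route buys brevity. Two points you should tighten: (i) the phrase ``the same argument run inside $\widetilde M$'' implicitly uses the identification of the core $M\mathbin{\bar{\rtimes}}_{\sigma^{\phi}}\mathbb{R}$ with the crossed product $(A\mathbin{\bar{\otimes}}L^{\infty}(Y\times\mathbb{R}))\mathbin{\bar{\rtimes}}\Gamma$ of the Maharam-extended action, which is what legitimizes taking Fourier coefficients over $\Gamma$ with coefficients in the enlarged base --- state this explicitly (it follows from your displayed formula for $\sigma^{\phi}$ on the $u_g$, as in $\cite{Tak 2}$ and $\cite{HV}$); (ii) your parenthetical reason for excluding type $\rm I$ for $M$ (``non-smoothness'' of the action on the centre) is vague as written; it is cleaner to note that $A\mathbin{\bar{\otimes}}L^{\infty}(Y)$ is diffuse and is the range of a normal conditional expectation on $M$, which is impossible if $M$ is a type $\rm I$ factor --- this is the same mechanism the paper uses with $N$ in place of the base algebra.
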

\begin{proof}
Here we only prove $M$ is of type $\rm II_1$, $\rm II_\infty$, and $\rm III$ if and only if so is $N$ respectively. The final assertion can be proved in the same way as $\cite[\rm Corollary\ B]{HV}$.

Since $N$ is diffuse and $N\subset M$ is with expectation, $M$ is not of type I. It is easy to see that $M$ is finite if and only if so is $N$. Non-finite cases follows from $\cite[\rm Lem\ V.2.29]{Tak 1}$.
\end{proof}
Now assume that $\Gamma$ is non-amenable and bi-exact and $\Lambda$ is amenable. Then $\ker\pi$ is non-amenable so that $N$ is non-injective. Since $N\subset M$ is with expectation, $M$ is non-injective. Then $M$ is a non-injective semiprime factor if $N$ is not of type ${\rm III}_1$, and is a prime factor if $A$ is abelian. Hence we can construct various types of (semi)prime factors if we can control the initial action on $(Y,\nu)$.

For example, set $\Gamma:=\mathbb{F}_n$, $\Lambda:=\mathbb{Z}$, $\pi$ is a homomorphism via the abelization, and the $\Gamma$-action is the Bernoulli action on $A:=\bigotimes R$ or $\bigotimes L^\infty(X)$. Then $M$ satisfies the assumptions above for any $\mathbb{Z}$-action on any $(Y,\nu)$. Here $\Gamma$ can be chosen as, for example  $\mathbb{Z}^2\rtimes\mathbb{F}_n$ (via $\mathbb{F}_n\subset{\rm SL}(2,\mathbb{Z})$), $\Delta\wr\mathbb{F}_n$ for amenable $\Delta$, or fundamental groups $\pi_1(\Sigma_g)$ $(g\geq2)$.
\end{Exa}

$\bf Acknowledgement$ The author would like to thank Professor Yasuyuki Kawahigashi, who is his
adviser, Masato Mimura, Professor Narutaka Ozawa, and Hiroki Sako for their valuable comments. He was supported by Research Fellow of the Japan Society for the Promotion of Science.




\begin{thebibliography}{99}
\bibitem{Ad} S. Adams, \textit{Boundary amenability for word hyperbolic groups and an application to smooth dynamics of simple groups}. Topology 33 (1994), 765--783.
%
\bibitem{AO} C. A. Akemann and P. A. Ostrand, \textit{On a tensor product $C^{\ast}$-algebra associated with the free group on two generators}. J. Math. Soc. Japan 27 (1975), no. 4, 589--599.
%
\bibitem{AR} C. Anantharaman-Delaroche and J. Renault, \textit{Amenable groupoids}. With a foreword by Georges Skandalis and Appendix B by E. Germain. Monographies de L'Enseignement $\rm Math\acute{e}matique$ 36. Geneva, 2000.
%
\bibitem{As} J. Asher, \textit{A Kurosh-type theorem for type $\rm III$ factors}. Proc. Amer. Math. Soc. 137 (2009), 4109--4116.
%
%
\bibitem{Dykema} E.F. Blanchard and K.J. Dykema, \textit{Embeddings of reduced free products of operator algebras}. Pacific J. Math. 199 (2001), 1--19.
%
\bibitem{BO} N. P. Brown and N. Ozawa, \textit{$C^{\ast}$-algebras and finite-dimensional approximations}. Graduate Studies in Mathematics, 88. American Mathematical Society, Providence, RI, 2008.
%
\bibitem{CH} I. Chifan and C. Houdayer, \textit{Bass-Serre rigidity results in von Neumann algebras}, Duke Math. J., 153 (2010), 23--54.
%
\bibitem{CS} I. Chifan and T. Sinclair, \textit{On the structural theory of $\rm {II}_{1}$ factors of negatively curved groups}. ArXiv:1103.4299.
%
\bibitem{CSB} I. Chifan, T. Sinclair, and B. Udrea, \textit{On the structural theory of $\rm {II}_{1}$ factors of negatively curved groups, $\rm II$. Actions by product groups}. ArXiv:1108.4200.
%
\bibitem{CE} M.D. Choi and E.G. Effros, \textit{Nuclear $C^{\ast}$-algebras and injectivity: the general case}, Indiana Univ. Math. J. 26 (1977), 443--446.
%
\bibitem{Co} A. Connes, \textit{Classification of injective factors. Cases $\rm {II}_{1}$, $\rm {II}_{\infty}$, $\rm {III}_{\lambda}$, $\lambda\neq1$}. Ann. of Math. (2) 104 (1976), no. 1, 73--115.
%
%
\bibitem{CT} A. Connes and M. Takesaki, \textit{The flow of weights on factors of type $\rm III$}. T$\rm \hat{o}$hoku Math. J. 29 (1977), 473--575.
%
\bibitem{DR} Z. Dong and Z.-J. Ruan, \textit{${\it Weak}^{\ast}$ exactness for dual operator spaces}. J. Funct. Anal. 253(2007), 373--397.
%
\bibitem{Dy ex} K.J. Dykema, \textit{Exactness of reduced amalgamated free product $C^{\ast}$-algebras}. Forum Math. 16 (2004), 161--180.
%
\bibitem{ER} E. G. Effros and Z.-J. Ruan, \textit{Operator spaces}. London Mathematical Society Monographs. New Series, 23. The Clarendon Press, Oxford University Press, New York, 2000.
%
\bibitem{GJ} M. Gao and M. Junge, \textit{Examples of prime von Neumann algebras}. Int. Math. Res. Not. IMRN 2007, no. 15, Art. ID rnm042.
%
\bibitem{Gr} M. Gromov, \textit{Random walk in random groups. Geom}. Funct. Anal., 13(1):73--146, 2003.
%
\bibitem{HS} U. Haagerup and E. Stormer, \textit{Equivalence of Normal States on von Neumann Algebras and the Flow of Weights}. Advances in Mathematics 83 (1990), 180--262.
%
\bibitem{HG} N. Higson and E. Guentner, \textit{Group $C^*$-algebras and K-theory. Noncommutative geometry}. 137--251, Lecture Notes in Math., 1831, Springer, Berlin, 2004.
%
\bibitem{HR} C. Houdayer and E. Ricard, \textit{Approximation properties and absence of Cartan subalgebra for free Araki-Woods factors}. Adv. Math., 228(2):764--802, 2011, 1006.3689.
%
\bibitem{HV} C. Houdayer and S. Vaes, \textit{Type $\rm III$ factors with unique Cartan decomposition}.  ArXiv:1203.1254.
%
\bibitem{Ki 1} E. Kirchberg, \textit{$C^{\ast}$-nuclearity implies CPAP}. Math. Nachr. 76 (1977), 203--212.
%
\bibitem{wk ex K} E. Kirchberg, \textit{Exact $C^{\ast}$-algebras, tensor products, and the classification of purely infinite algebras}. Proceedings of the International Congress of Mathematicians, Vol. 1, 2 ($\rm Z\ddot{u}rich$, 1994), 943--954, $\rm Birkh\ddot{a}user$, Basel, 1995.
%
\bibitem{ex C} E. Kirchberg, \textit{On subalgebras of the CAR-algebra}. J. Funct. Anal. 129 (1995), no. 1, 35–63.
%
\bibitem{Lance} E.C. Lance, \textit{Hilbert $C^{\ast}$-modules}. A toolkit for operator algebraists, London Mathematical Society Lecture Note Series, 210. Cambridge University Press, Cambridge, 1995.
%
\bibitem{NPS} R. Nicoara, S. Popa, and R. Sasyk, \textit{On $\rm{II}_{1}$ factors arising from 2-cocycles of w-rigid groups}. J. Func. Anal. 242 (2007), 230--246.
%
\bibitem{Oz 09} N. Ozawa. \textit{An example of a solid von Neumann algebra}. Hokkaido Math. J., 38(3):557--561, 2009.
%
\bibitem{ame O} N. Ozawa, \textit{Amenable actions and exactness for discrete groups}. C. R. Acad. Sci. Paris Ser. I Math., 330 (2000), 691--695.
%
\bibitem{solid 3} N. Ozawa, \textit{A Kurosh type theorem for type $\rm {II}_{1}$ factors}. Int. Math. Res. Not. (2006), Art. ID 97560, 21 pp.
%
\bibitem{solid 1} N. Ozawa, \textit{Solid von Neumann algebras}. Acta Math. 192 (2004), 111--117.
%
\bibitem{Oz rel hyp} N. Ozawa, \textit{Weak amenability of hyperbolic groups. Groups Geom}. Dyn. 2 (2008), 271--280.
%
\bibitem{wk ex O} N. Ozawa, \textit{Weakly exact von Neumann algebras}. J. Math. Soc. Japan 59 (2007), no. 4, 985--991.
%
\bibitem{solid 2} N. Ozawa and S. Popa, \textit{Some prime factorization results for type $\rm {II}_{1}$ factors}. Invent. Math. 156 (2004), 223--234.
%
\bibitem{Pimsner} M.V. Pimsner, \textit{A class of $C^{\ast}$-algebras generalizing both Cuntz-Krieger algebras and crossed products by $\mathbb{Z}$}. Free probability theory (Waterloo, ON, 1995), 189–212, Fields Inst. Commun, 12, Amer. Math. Soc., Providence, RI, 1997.
%
\bibitem{Pisier} G. Pisier, \textit{Introduction to Operator Space Theory}. Cambridge University Press (2003).
%
\bibitem{Po 3} S. Popa, \textit{A unique decomposition result for $\rm HT$ factors with torsion free core}, J. Fnal. Analysis 242 (2007), 519-525.
%
\bibitem{Po 2} S. Popa, \textit{Strong rigidity of $\rm {II}_{1}$ factors arising from malleable actions of w-rigid groups $\rm I$}. Invent. Math. 165 (2006), 369--408.
%
\bibitem{Po 1} S. Popa, \textit{On a class of type $\rm {II}_{1}$ factors with Betti numbers invariants}. Ann. of Math. 163 (2006), 809--899.
%
\bibitem{Po Ber} S. Popa, \textit{On the superrigidity of malleable actions with spectral gap.} J. Amer. Math. Soc. 21 (2008), no. 4, 981--1000.
%
\bibitem{Pu} L. $\rm Puk\grave{a}nsky$, \textit{Some examples of factors}. Publ. Math. Debrecen 4 (1956), 135--156.
%
\bibitem{RR} J. Ramagge and G. Robertson, \textit{Factors from trees}. Proc. Amer. Math. Soc. 125 (1997), no. 7, 2051--2055.
%
\bibitem{Sako} H. Sako, \textit{Measure equivalence rigidity and bi-exactness of groups}. J. Funct. Anal. 257(10) 3167--3202, 2009.
%
\bibitem{Sa 09} H. Sako, \textit{The class $\cal S$ as an ME invariant}. Int. Math. Res. Not. IMRN, (15):2749--2759, 2009.
%
\bibitem{Sk} G. Skandalis. \textit{Une notion de $\it nucl\acute{e}arit\acute{e}$ en K-$\it th\acute{e}orie$ ( d'apr$\grave{e}s$ J. Cuntz)}. K-Theory 1 (1988), no. 6, 549--573.
%
\bibitem{Tak 1} M. Takesaki, \textit{Theory of operator algebras $\rm I$}. Encyclopedia of Mathematical Sciences, 124. Operator Algebras and Non-commutative Geometry, 5. Springer-Verlag, Berlin, 2002.
%
\bibitem{Tak 2} M. Takesaki, \textit{Theory of operator algebras $\rm II$}. Encyclopedia of Mathematical Sciences, 125. Operator Algebras and Non-commutative Geometry, 5. Springer-Verlag, Berlin, 2002.
%
\bibitem{Tak 3} M. Takesaki, \textit{Theory of operator algebras $\rm III$}. Encyclopedia of Mathematical Sciences, 127. Operator Algebras and Non-commutative Geometry, 5. Springer-Verlag, Berlin, 2003.
%
\bibitem{Ueda} Y. Ueda, \textit{Some analysis on amalgamated free products of von Neumann algebras in non-tracial setting}. ArXiv:1203.1806.
%
\bibitem{Wa} S. Wassermann, \textit{Exact $C^{\ast}$-algebras and related topics}. Lecture Notes Series 19. Seoul National University, Research Institute of Mathematics, Global Analysis Research Center, Seoul, 1994.
%
\end{thebibliography}
\end{document}